\newtheorem{theorem}{Theorem}[section]
\newtheorem{corollary}[theorem]{Corollary}
\newtheorem{lemma}[theorem]{Lemma}
\newtheorem{proposition}[theorem]{Proposition}
\theoremstyle{remark}
\newtheorem{remark}[theorem]{Remark}
\newcommand{\N}{{\mathbb N}}
\newcommand{\C}{{\mathbb C}}
\newcommand{\cD}{{\mathcal D}}
\newcommand{\cH}{{\mathcal H}}
\newcommand{\cM}{{\mathcal M}}
\newcommand{\cR}{{\mathcal R}}
\begin{document}

\title[From Kajihara's formula to deformed MR and NS operators]{From Kajihara's transformation formula to deformed Macdonald--Ruijsenaars and Noumi--Sano operators}

\author[M.~Halln\"as]{Martin Halln\"as}
\address{Department of Mathematical Sciences, Chalmers University of Technology and the University of Gothenburg, SE-412 96 Gothenburg, Sweden}
\email{hallnas@chalmers.se}

\author[E.~Langmann]{Edwin Langmann}
\address{Department of Physics, KTH Royal Institute of Technology, SE-106 91 Stockholm, Sweden}
\email{langmann@kth.se}

\author[M.~Noumi]{Masatoshi Noumi}
\address{Department of Mathematics, KTH Royal Institute of Technology, SE-100 44 Stockholm, Sweden (on leave from: Department of Mathematics, Kobe University, Rokko, Kobe 657-8501, Japan)}
\email{noumi@math.kobe-u.ac.jp}

\author[H.~Rosengren]{Hjalmar Rosengren}
\address{Department of Mathematical Sciences, Chalmers University of Technology and the University of Gothenburg, SE-412 96 Gothenburg, Sweden}
\email{hjalmar@chalmers.se}

\date{\today}

\begin{abstract}
Kajihara obtained in 2004 a remarkable transformation formula connecting multiple basic hypergeometric series  associated with $A$-type root systems of different ranks. By multiple principle specialisations of his formula, we deduce kernel identities for deformed Macdonald--Ruijsenaars (MR) and Noumi--Sano (NS) operators. The deformed MR operators were introduced by Sergeev and Veselov in the first order case and by Feigin and Silantyev in the higher order cases.
As applications of our kernel identities, we prove that all of these operators pairwise commute and are simultaneously diagonalised by the super-Macdonald polynomials. We also provide an explicit description of the algebra generated by the deformed MR and/or NS operators by a Harish-Chandra type isomorphism and show that the deformed MR (NS) operators can be viewed as restrictions of inverse limits of ordinary MR (NS) operators.
\end{abstract}

\maketitle

\tableofcontents

\section{Introduction}
In the mid 1840s, Heine \cite{Hei46,Hei47} introduced the basic hypergeometric series
\begin{equation}
\label{2phi1}
{}_2 \phi_1\left[
\setlength\arraycolsep{1pt}
\begin{array}{c}
a, b\\
c
\end{array}
;q,u\right]
= \sum_{k=0}^\infty \frac{(a;q)_k(b;q)_k}{(q;q)_k(c;q)_k}u^k,
\end{equation}
with the $q$-Pochhammer symbol
$$
(a;q)_k = \frac{(a;q)_\infty}{(aq^k;q)_\infty} = (1-a)(1-aq)\cdots (1-aq^{k-1}),\ \ \ (a;q)_\infty = \prod_{n=0}^\infty (1-aq^n),
$$
as a natural $q$-deformation of Gauss' hypergeometric series ${}_2 F_1(a,b;c;z)$. For a detailed account of such series, see e.g.~Gasper and Rahman's book \cite{GS04}. Among Heine's many fundamental results is the transformation formula  
\begin{equation}
\label{Heine}
{}_2 \phi_1\left[
\setlength\arraycolsep{1pt}
\begin{array}{c}
a, b\\
c
\end{array}
;q,u\right]
= \frac{(abu/c;q)_\infty}{(u;q)_\infty}
{}_2 \phi_1\left[
\setlength\arraycolsep{1pt}
\begin{array}{c}
c/a, c/b\\
c
\end{array}
;q,abu/c\right],
\end{equation}
which can be viewed as a $q$-analogue of Euler's transformation formula for ${}_2 F_1$.

Kajihara's formula \cite{Kaj04} (see also \cite{KN03}), which is our starting point in this paper, is a far-reaching generalisation of Heine's formula \eqref{Heine}, connecting multiple basic hypergeometric series associated with root systems of type A of different ranks.

Other key objects in the paper are particular generalisations of the Macdonald--Ruijsenaars (MR) $q$-difference operators $D_n^r$, $r=0,1,\ldots,n$. From Chapter VI in Macdonald's book \cite{Mac95}, we recall the elegant and explicit definition in terms of the generating series
\begin{equation}
\label{Dn}
\begin{split}
D_n(x;u) &= \sum_{I\subseteq\{1,\ldots,n\}} (-u)^{|I|}t^{\binom{|I|}{2}} \prod_{i\in I, j\notin I} \frac{tx_i-x_j}{x_i-x_j}\cdot \prod_{i\in I}T_{q,x_i}\\
&= \sum_{r=0}^n (-u)^r D_n^r(x),
\end{split}
\end{equation}
where $T_{q,x_i}$ denotes the $q$-shift operator with respect to $x_i$. (Here and below, we suppress the dependence on the parameters $q$ and $t$ whenever ambiguities are unlikely to arise.)  Up to a change of gauge and variables, the $q$-difference operators $D_n^r$ coincide with the trigonometric version of the difference operators $\hat{S_r}$ introduced by Ruijsenaars' \cite{Rui87}, who proved that they commute, and thus define a quantum integrable system. We note that he obtained these results even at the more general elliptic level.

In a more recent development, Noumi and Sano \cite{NS20} introduced an infinite family of commuting $q$-difference operators $H_n^r$, $r\in\N$, given by the expansion
\begin{equation}
\label{Hn}
\begin{split}
H_n(x;u) &= \sum_{\mu\in\N^n} u^{|\mu|} \frac{\Delta(q^\mu x)}{\Delta(x)} \prod_{i,j=1}^n \frac{(tx_i/x_j;q)_{\mu_i}}{(qx_i/x_j;q)_{\mu_i}}\cdot \prod_{i=1}^n T_{q,x_i}^{\mu_i}\\
&= \sum_{r=0}^\infty u^rH_n^r(x),
\end{split}
\end{equation}
and proved that they generate the same commutative algebra as the MR operators $D_n^r$, $r=1,\ldots,n$, to which they are related through a Wronski-type formula. Throughout the paper, we refer to the operators $H_n^r$ as the Noumi--Sano (NS) operators.

Remarkably, the MR and NS operators can be unified in a family of commuting difference operators $D_{n,m}^r(x,y;q,t)$, $r\in\N$, in two sets of variables $x=(x_1,\ldots,x_n)$ and $y=(y_1,\ldots,y_m)$, which reduce to $D_n^r(x;q,t)$ and $H_m^r(y;q^{-1},t^{-1})$ for $m=0$ and $n=0$, respectively. Such difference operators first appeared in the $m=1$ case in work by Chalykh \cite{Cha00,Cha02}. Sergeev and Veselov \cite{SV04,SV09} introduced and studied the $r=1$ operators for general $n,m\in\N$, while the $r>2$ operators are due to Feigin and Silantyev \cite{FS14}, who, in particular, proved commutativity. The operators $D_{n,m}^r$ can be considered as natural difference analogues of so-called deformed (trigonometric) Calogero--Moser--Sutherland operators \cite{CFV98,Ser02,SV04,SV05}, which, in turn, are intimately related to Lie superalgebras \cite{Ser02,SV04}, $\beta$-ensembles of random matrices \cite{DL15}, as well as conformal field theory and the fractional quantum Hall effect \cite{AL17}.

In this paper, we establish an intriguing connection between Kajihara's transformation formula and Feigin and Silantyev's difference operators $D_{n,m}^r(x,y)$: By multiple principle specialisations of the former, we obtain so-called kernel identities of the form
$$
(\cD_{n,m}(x,y;u)-\cD_{N,M}(z,w;u))\Phi_{n,m;N,M}(x,y;z,w)=0
$$
for the renormalised generating series
$$
\cD_{n,m}(x,y;u) = \frac{(q^mt^{-n}u;t^{-1})_\infty}{(u;t^{-1})_\infty} \sum_{r=0}^\infty (-u)^r D_{n,m}^r(x,y),
$$
where $(n,m),(N,M)\in\N^2$ can be chosen arbitrarily. For the $r=1$ difference operators such identities were previously obtained by Atai and two of the authors \cite{AHL14}. Our kernel function $\Phi_{n,m;N,M}$ is an explicitly given meromorphic function, which reduces to Macdonald's (reproducing) kernel function $\Pi(x,z)$ when $m=M=0$; see, e.g., Section VI.3 in \cite{Mac95} for corresponding kernel identities involving $D_n$ \eqref{Dn} and \cite{NS20} for identities relating $H_n$ with $H_m$ and $D_n$ with $H_m$.

In addition, we obtain kernel  identities involving a `dual' family of difference operators $H_{n,m}^r(x,y;q,t)$, $r\in\N$, in which the roles of the two sets of variables are interchanged, and which specialise to $H_n^r(x;q,t)$ when $m=0$ and $D_m^r(y;t^{-1},q^{-1})$ in case $n=0$.

In keeping with earlier literature on the subject, we shall refer to the difference operators $D_{n,m}^r$ and $H_{n,m}^r$ as deformed MR operators and deformed NS operators, respectively. Their precise definition is given in Section \ref{Sec:kIds}, where the corresponding kernel  identities, alluded to above, are also formulated and proved. 

In Section \ref{Sec:Apps}, we detail a number of applications of our kernel identities. From the known commutativity of the ordinary MR and NS operators, we infer in Subsection \ref{SubSec:Comm} that their deformed counterparts all commute with each other. In Subsection \ref{SubSec:EigFuncs}, we show that the so-called super-Macdonald polynomials \cite{SV09} are joint eigenfunctions of the deformed MR and NS operators, and we also compute the corresponding joint eigenvalues explicitly. Again, we rely on known eigenfunction properties of the ordinary MR and NS operators. Subsection \ref{SubSec:HC} contains a simple and explicit description of the commutative algebra $\cR_{n,m}$ generated by the deformed NS operators $H_{n,m}^r$ ($r\in\N$) by an Harish-Chandra type isomorphism to an algebra of polynomials in $n+m$ variables with suitable symmetry properties. As corollaries, we establish Wronski type recurrence relations for the deformed MR and NS operators, and thereby show that the deformed MR operators $D_{n,m}^r$ ($r\in\N$) provide another set of generators for $\cR_{n,m}$. In addition, we infer that the first $n+m$ operators $D_{n,m}^r$, or alternatively  $H_{n,m}^r$, are algebraically independent, and thus define an integrable system. For the former operators, this was first shown by Feigin and Silantyev \cite{FS14}. Finally, in Subsection \ref{SubSec:Res}, we provide an interpretation of the deformed MR and NS operators as particular restrictions of operators on the algebra of (complex) symmetric functions. This generalises results of Sergeev and Veselov \cite{SV09} on the $r=1$ case.

As we prove in the paper \cite{HLNR21}, some of these results, including kernel identities and commutativity, generalise to the elliptic level.

The proofs of the various lemmas in the main text are collected in Appendix \ref{App:Lemmas}.

\subsection*{Notation}
We use the convention $\N=\{0,1,2,\ldots\}$ and let $\N^*=\N\setminus\{0\}$. Unless otherwise specified, we follow Macdonald's book \cite{Mac95} for notation and terminology from the theory of symmetric functions.

\section{Kernel identities}
\label{Sec:kIds}
This section is devoted to the formulation and proof of our main result. To this end, we recall in Subsection \ref{SubSec:Kaj} Kajihara's transformation formula, whereas Subsection \ref{SubSec:defOps} contains definitions of the deformed MR and NS operators. The corresponding kernel identities are then stated and proved in Subsection \ref{SubSec:kerIds}.

\subsection{Kajihara's transformation formula}
\label{SubSec:Kaj}
Let $K,L\in\N^*$. Given four vectors of (complex) variables
$$
(a_1,\ldots,a_K), (X_1,\ldots,X_K)\in\C^K,\ \ (b_1,\ldots,b_L), (c_1,\ldots,c_L)\in\C^L,
$$
we recall Kajihara and Noumi's \cite{KN03} multiple basic hypergeometric series
\begin{multline}
\label{phiKL}
\setlength\arraycolsep{1pt}
\phi^{K,L}\left(
\begin{array}{c}
a_1,\ldots,a_K\\
X_1,\ldots,X_K
\end{array}
\hspace{1pt}
\vline
\hspace{1pt}
\begin{array}{c}
b_1,\ldots,b_L\\
c_1,\ldots,c_L
\end{array}
;u\right)\\
= \sum_{\gamma\in\N^K} u^{|\gamma|} \frac{\Delta(q^\gamma X)}{\Delta(X)} \prod_{i,j=1}^K \frac{(a_jX_i/X_j;q)_{\gamma_i}}{(qX_i/X_j;q)_{\gamma_i}} \cdot \prod_{i=1}^K \prod_{k=1}^L \frac{(X_ib_k;q)_{\gamma_i}}{(X_ic_k;q)_{\gamma_i}},
\end{multline}
where
$$
\Delta(X) = \prod_{1\leq i<j\leq K} (X_i-X_j).
$$
For general values of $a_j$ ($j=1,\ldots,K$), $b_k$ ($k=1,\ldots,L$) and $c\in\C$, Kajihara \cite{Kaj04} established the transformation formula
\begin{multline}
\label{KajForm}
\phi^{K,L}\left(
\setlength\arraycolsep{1pt}
\begin{array}{c}
a_1,\ldots,a_K\\
X_1,\ldots,X_K
\end{array}
\hspace{1pt}
\vline
\hspace{1pt}
\begin{array}{c}
b_1Y_1,\ldots,b_LY_L\\
cY_1,\ldots,cY_L
\end{array}
;u\right)\\
= \frac{(\alpha\beta u/c^L;q)_\infty}{(u;q)_\infty} \phi^{L,K}\left(
\setlength\arraycolsep{1pt}
\begin{array}{c}
c/b_1,\ldots,c/b_L\\
Y_1,\ldots,Y_L
\end{array}
\hspace{1pt}
\vline
\hspace{1pt}
\begin{array}{c}
cX_1/a_1,\ldots,cX_K/a_K\\
cX_1,\ldots,cX_K
\end{array}
;\alpha\beta u/c^L\right),
\end{multline}
where
$$
\alpha := a_1\cdots a_K,\ \ \ \beta := b_1\cdots b_L.
$$
In the special case $K=L=1$, it is readily seen that
$$
\phi^{1,1}\left(
\setlength\arraycolsep{1pt}
\begin{array}{c}
a\\
X
\end{array}
\hspace{1pt}
\vline
\hspace{1pt}
\begin{array}{c}
b\\
c
\end{array}
;u\right)
=
{}_2 \phi_1\left[
\setlength\arraycolsep{1pt}
\begin{array}{c}
a, bX\\
cX
\end{array}
;q,u\right]
$$
and that \eqref{KajForm} reduces to Heine's transformation formula \eqref{Heine} (with $b\to bXY$ and $c\to cXY$).

\subsection{Deformed NS and MR operators}
\label{SubSec:defOps}
Here and throughout the paper, we assume that $q,t\in\C^*$ are not roots of unity to ensure that, in particular, all of the operators in question are well-defined.

To an $n$-tuple $\mu\in\N^n$, we associate the $q$-difference operator
$$
T_{q,x}^{\pm \mu} := \prod_{i=1}^n T_{q,x_i}^{\pm\mu_i},
$$
which acts on meromorphic functions in $x=(x_1,\ldots,x_n)$ according to
$$
T_{q,x}^{\pm\mu} f(x_1,\ldots,x_n) = f(q^{\pm\mu_1}x_1,\ldots,q^{\pm\mu_n}x_n).
$$
Moreover, we find it convenient to identify subsets $I\subseteq\{1,\ldots,m\}$ with $m$-tuples $(I_1,\ldots,I_m)\in\{0,1\}^m$, where $I_i=1$ when $i\in I$ and $I_i=0$ otherwise. With this identification in place, we have
$$
T_{t,y}^{\pm I} = \prod_{i\in I}T_{t,y_i}^{\pm 1}.
$$

We can now define the deformed NS operators $H_{n,m}^r$ ($r\in\N$) by the generating series
\begin{equation}
\label{Hnm}
\begin{split}H_{n,m}(x,y;u) &= \sum_{\mu\in\N^n}\sum_{I\subseteq\{1,\ldots,m\}} (t^{1-n}q^mu)^{|\mu|} (-tu)^{|I|}q^{\binom{|I|}{2}} B_{\mu,I}(x,y) T_{q,x}^{\mu}T_{t,y}^{-I}\\
&= \sum_{r=0}^\infty u^rH_{n,m}^r(x,y;q,t),
\end{split}
\end{equation}
with coefficient functions
\begin{equation}
\label{BmuI}
\begin{split}
B_{\mu,I}(x,y) &= \frac{\Delta(q^{\mu} x)}{\Delta(x)} \prod_{i,j=1}^n \frac{(tx_i/x_j;q)_{\mu_i}}{(qx_i/x_j;q)_{\mu_i}}\cdot \prod_{\substack{1\leq i,j\leq m\\ i\in I; j\notin I}} \frac{y_i-qy_j}{y_i-y_j}\\
&\quad \cdot \prod_{i=1}^n \Bigg(\prod_{j\in I}\frac{1-x_i/ty_j}{1-q^{\mu_i}x_i/y_j}\cdot \prod_{j\notin I}\frac{1-x_i/qy_j}{1-q^{\mu_i-1}x_i/y_j}\Bigg),
\end{split}
\end{equation}
and where $|\mu|=\sum_{i=1}^m \mu_i$ and $|I|$ denotes the cardinality of $I$.

Setting $m=0$ in \eqref{Hnm}--\eqref{BmuI} and comparing the resulting expressions with \eqref{Hn}, we see that $H_{n,0}(x;u;q,t)=H_n(x;t^{1-n}u;q,t)$. On the other hand, taking $n=0$, we find that $H_{0,m}(y;u;q,t)=D_m(y;tq^{m-1}u;t^{-1},q^{-1})$, cf.~\eqref{Dn}.

We obtain the deformed MR operators by interchanging $n\leftrightarrow m$, $x\leftrightarrow y$ and $q\leftrightarrow t^{-1}$ as well as scaling $u\to qu$ in the deformed NS operators. More precisely, we have
\begin{equation}
\label{Dnm}
\begin{split}
D_{n,m}(x,y;u;q,t) &= H_{m,n}(y,x;qu;t^{-1},q^{-1})\\
&= \sum_{r=0}^\infty (-u)^rD_{n,m}^r(x,y;q,t).
\end{split}
\end{equation}
Introducing the coefficient functions
\begin{equation}
\label{AImu}
\begin{split}
A_{I,\mu}(x,y) &= \prod_{\substack{1\leq i,j\leq n\\ i\in I; j\notin I}} \frac{x_i-t^{-1}x_j}{x_i-x_j}\cdot \frac{\Delta(t^{-\mu} y)}{\Delta(y)} \prod_{i,j=1}^m \frac{(y_i/qy_j;t^{-1})_{\mu_i}}{(y_i/ty_j;t^{-1})_{\mu_i}}\\
&\quad \cdot \prod_{i=1}^m \Bigg(\prod_{j\in I}\frac{1-qy_i/x_j}{1-t^{-\mu_i}y_i/x_j}\cdot \prod_{j\notin I}\frac{1-ty_i/x_j}{1-t^{1-\mu_i}y_i/x_j}\Bigg),
\end{split}
\end{equation}
we get the explicit formula
\begin{equation*}
\begin{split}
D_{n,m}(x,y;u) &= \sum_{I\subseteq\{1,\ldots,n\}}\sum_{\mu\in\N^m} (q^m t^{-n}u)^{|\mu|} (-u)^{|I|}t^{-\binom{|I|}{2}} A_{I,\mu}(x,y)T_{q,x}^IT_{t,y}^{-\mu}\\
&= \sum_{r=0}^\infty (-u)^r D_{n,m}^r(x,y),
\end{split}
\end{equation*}

We note the special cases $D_{n,0}(x;u)=D_n(x;t^{1-n}u)$ and $D_{0,m}(y;u;q,t)=H_m(y;q^{m}u;t^{-1},q^{-1})$. In the general case, $D_{n,m}^r$ should be compared with $\overline{M_{-b_r}}$ in Eq.~(4.19) of \cite{FS14}. Indeed, after invoking the elementary identity
\begin{equation*}
\frac{\Delta(t^{-\mu} y)}{\Delta(y)} \prod_{i,j=1}^n \frac{1}{(y_i/ty_j;t^{-1})_{\mu_i}}
= (-1)^{|\mu|}t^{|\mu|(|\mu|+1)/2}\prod_{i,j=1}^n \frac{1}{(t^{\mu_j}y_i/y_j;t^{-1})_{\mu_i}},
\end{equation*}
it is readily seen that the former may be viewed as a multiplicative form of the latter additive difference operators.

\subsection{Kernel identities}
\label{SubSec:kerIds}
We proceed to state and prove our kernel identities for the deformed MR and NS operators.

Our result takes a particularly simple form when expressed in terms of the modified generating series
\begin{equation}
\label{cHnm}
\cH_{n,m}(x,y;u) = \frac{(t^{1-n}q^mu;q)_\infty}{(tu;q)_\infty}H_{n,m}(x,y;u)
\end{equation}
and
\begin{equation}
\label{cDnm}
\cD_{n,m}(x,y;u) = \frac{(q^m t^{-n}u;t^{-1})_\infty}{(u;t^{-1})_\infty}D_{n,m}(x,y;u),
\end{equation}
cf.~\eqref{Dnm} and \eqref{Hnm}.

Under the assumption that $|q|<1$ and $|t|>1$, we define the meromorphic function $\Phi_{n,m;N,M}(x,y;z,w)=\Phi_{n,m;N,M}(x,y;z,w;q,t)$ in $n+m$ variables $(x,y)=((x_1,\ldots,x_n),(y_1,\ldots,y_m))$ and $N+M$ variables $(z,w)=((z_1,\ldots,z_N),(w_1,\ldots,w_M))$ by
\begin{equation}
\label{kerFunc}
\begin{split}
\Phi_{n,m;N,M}(x,y;z,w) &= \prod_{i=1}^n\prod_{j=1}^N\frac{(x_iz_j;q)_\infty}{(t^{-1}x_iz_j;q)_\infty}\cdot \prod_{i=1}^m\prod_{j=1}^M\frac{(y_iw_j;t^{-1})_\infty}{(qy_iw_j;t^{-1})_\infty}\\
&\quad \cdot \prod_{i=1}^n\prod_{j=1}^M (1-x_iw_j)\cdot \prod_{i=1}^m\prod_{j=1}^N (1-y_iz_j).
\end{split}
\end{equation}

The following theorem constitutes our main result.

\begin{theorem}
\label{Thm:kerIds}
For $0<|q|<1$ and $|t|>1$, we have the kernel identities
\begin{equation}
\label{cHkerId}
\cH_{n,m}(x,y;u)\Phi_{n,m;N,M}(x,y;z,w) = \cH_{N,M}(z,w;u)\Phi_{n,m;N,M}(x,y;z,w)
\end{equation}
and
\begin{equation}
\label{cDkerId}
\cD_{n,m}(x,y;u)\Phi_{n,m;N,M}(x,y;z,w) = \cD_{N,M}(z,w;u)\Phi_{n,m;N,M}(x,y;z,w).
\end{equation}
\end{theorem}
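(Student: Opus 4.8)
The plan is to derive both identities from Kajihara's transformation formula \eqref{KajForm} by a single principal specialisation in each group of variables, and then to deduce \eqref{cDkerId} from \eqref{cHkerId} by an elementary change of parameters. I would treat all series as formal power series in $u$ (equivalently, take $|u|$ small and $(x,y,z,w)$ generic), so that each coefficient of a power of $u$ on either side is a finite sum of meromorphic functions and the identities may be checked coefficientwise. The key reformulation is this: applying $T_{q,x}^{\mu}T_{t,y}^{-I}$ to \eqref{kerFunc} multiplies it by an explicit ratio of $q$- and $t^{-1}$-shifted factorials and finite products, obtained from $(a;q)_\infty=(1-a)(qa;q)_\infty$ and $(a;t^{-1})_\infty=(1-a)(at^{-1};t^{-1})_\infty$; multiplying this ratio by $B_{\mu,I}(x,y)$ and by the weight $(t^{1-n}q^mu)^{|\mu|}(-tu)^{|I|}q^{\binom{|I|}{2}}$ of \eqref{Hnm} and summing over $\mu\in\N^n$ and $I\subseteq\{1,\dots,m\}$, one recognises $H_{n,m}(x,y;u)\Phi_{n,m;N,M}(x,y;z,w)/\Phi_{n,m;N,M}(x,y;z,w)$ as a specialisation of \eqref{phiKL}:
\begin{multline*}
H_{n,m}(x,y;u)\,\Phi_{n,m;N,M}(x,y;z,w)\\
=\Phi_{n,m;N,M}(x,y;z,w)\,\phi^{n+m,N+M}\!\left(
\setlength\arraycolsep{1pt}
\begin{array}{c}
t,\dots,t,q^{-1},\dots,q^{-1}\\
x_1,\dots,x_n,y_1,\dots,y_m
\end{array}
\hspace{1pt}\vline\hspace{1pt}
\begin{array}{c}
t^{-1}z_1,\dots,t^{-1}z_N,qw_1,\dots,qw_M\\
z_1,\dots,z_N,w_1,\dots,w_M
\end{array}
;t^{1-n}q^mu\right),
\end{multline*}
the first $n$ of the parameters $a_j$ being $t$ and the remaining $m$ being $q^{-1}$. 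I would record this identity as a lemma.

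For the lemma one argues termwise. The choice $a_{n+j}=q^{-1}$ is what makes it work: because of it, the factor $(q^{-1};q)_{\gamma_{n+j}}$ occurs in the numerator of the summand of \eqref{phiKL} (the $i=j=n+j$ term of the double product), so the summand vanishes once $\gamma_{n+j}\geq 2$; thus the last $m$ components of $\gamma$ are forced into $\{0,1\}$, and writing $I=\{\,j:\gamma_{n+j}=1\,\}$ the multiple sum becomes the double sum of \eqref{Hnm}. With $X=(x_1,\dots,x_n,y_1,\dots,y_m)$, the summand of \eqref{phiKL} splits into $x$--$x$, $y$--$y$, $x$--$y$, $x$--$z$, $x$--$w$, $y$--$z$ and $y$--$w$ blocks, and using only elementary factorial identities one checks that: the $x$--$x$ block of $\tfrac{\Delta(q^\gamma X)}{\Delta(X)}\prod_{i,j}\tfrac{(a_jX_i/X_j;q)_{\gamma_i}}{(qX_i/X_j;q)_{\gamma_i}}$ is the $x$-dependent factor of $B_{\mu,I}$ in \eqref{BmuI}; its $y$--$y$ block equals $\prod_{i\in I,\,j\notin I}(y_i-qy_j)/(y_i-y_j)$ times the monomial $(-1)^{|I|}q^{\binom{|I|}{2}-m|I|}$; the three $x$--$y$ cross-terms (one from $\tfrac{\Delta(q^\gamma X)}{\Delta(X)}$, two from $\prod_{i,j}\tfrac{(a_jX_i/X_j;q)_{\gamma_i}}{(qX_i/X_j;q)_{\gamma_i}}$) combine to the $x$--$y$ factor of $B_{\mu,I}$ (the second line of \eqref{BmuI}) times $t^{n|I|}$; and $\prod_i\prod_k(X_ib_k;q)_{\gamma_i}/(X_ic_k;q)_{\gamma_i}$ reproduces exactly the shift ratio $T_{q,x}^{\mu}T_{t,y}^{-I}\Phi_{n,m;N,M}/\Phi_{n,m;N,M}$ (its $x$--$z$, $x$--$w$ parts on the $x$-slots, its $y$--$z$, $y$--$w$ parts on the $y$-slots). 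The monomials $(-1)^{|I|}q^{\binom{|I|}{2}-m|I|}t^{n|I|}$, together with the $\phi$-series expansion variable, must then combine into the weight $(t^{1-n}q^mu)^{|\mu|}(-tu)^{|I|}q^{\binom{|I|}{2}}$ of \eqref{Hnm}, which forces that variable to be $t^{1-n}q^mu$ and is consistent with it.

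Now I would apply \eqref{KajForm}. The $(b,c)$-data above has the common-$c$ form needed there: with $c$ arbitrary, set $Y_k=z_k/c,\ b_k=c/t$ on the $N$ ``$z$-slots'' and $Y_k=w_k/c,\ b_k=qc$ on the $M$ ``$w$-slots''; then $\alpha=t^nq^{-m}$ and $\beta=c^{N+M}t^{-N}q^M$, so $\alpha\beta/c^{N+M}=t^{n-N}q^{M-m}$, and with expansion variable $\tilde u=t^{1-n}q^mu$ formula \eqref{KajForm} rewrites the $\phi^{n+m,N+M}$ above as $\tfrac{(t^{1-N}q^Mu;q)_\infty}{(t^{1-n}q^mu;q)_\infty}$ times $\phi^{N+M,n+m}$ with $a$-parameters $c/b_k$ (namely $t$ on the $z$-slots and $q^{-1}$ on the $w$-slots), $X$-variables $Y_k$, and $(b,c)$-data $(cX_i/a_i,cX_i)$, evaluated at $t^{n-N}q^{M-m}\tilde u=t^{1-N}q^Mu$. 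By the invariance of \eqref{phiKL} under $X_i\mapsto cX_i$, $(b_k,c_k)\mapsto(c^{-1}b_k,c^{-1}c_k)$, this equals $\phi^{N+M,n+m}$ with $X$-variables $(z_1,\dots,z_N,w_1,\dots,w_M)$, parameters $(t,\dots,t,q^{-1},\dots,q^{-1})$, and $(b,c)$-data $(t^{-1}x_1,\dots,t^{-1}x_n,qy_1,\dots,qy_m)$ over $(x_1,\dots,x_n,y_1,\dots,y_m)$, at $t^{1-N}q^Mu$; by the lemma read with $(n,m;x,y)$ and $(N,M;z,w)$ interchanged this is $H_{N,M}(z,w;u)\Phi_{N,M;n,m}(z,w;x,y)/\Phi_{N,M;n,m}(z,w;x,y)$, and $\Phi_{N,M;n,m}(z,w;x,y)=\Phi_{n,m;N,M}(x,y;z,w)$ directly from \eqref{kerFunc}. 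Hence
\begin{equation*}
H_{n,m}(x,y;u)\,\Phi_{n,m;N,M}(x,y;z,w)=\frac{(t^{1-N}q^Mu;q)_\infty}{(t^{1-n}q^mu;q)_\infty}\,H_{N,M}(z,w;u)\,\Phi_{n,m;N,M}(x,y;z,w),
\end{equation*}
and multiplying through by $(t^{1-n}q^mu;q)_\infty/(tu;q)_\infty$ gives \eqref{cHkerId}; the normalisation in \eqref{cHnm} is exactly what cancels the $(t^{1-n}q^m)$-dependent prefactor produced by Kajihara's formula.

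Finally, \eqref{cDkerId} follows from \eqref{cHkerId} by the substitution $q\mapsto t^{-1}$, $t\mapsto q^{-1}$ (admissible, since then $0<|t^{-1}|<1$ and $|q^{-1}|>1$), $(n,m;x,y;N,M;z,w)\mapsto(m,n;y,x;M,N;w,z)$ and $u\mapsto qu$: indeed \eqref{Dnm} upgrades, via \eqref{cHnm} and \eqref{cDnm}, to $\cD_{n,m}(x,y;u;q,t)=\cH_{m,n}(y,x;qu;t^{-1},q^{-1})$, while \eqref{kerFunc} gives $\Phi_{n,m;N,M}(x,y;z,w;q,t)=\Phi_{m,n;M,N}(y,x;w,z;t^{-1},q^{-1})$, and substituting these into the transformed \eqref{cHkerId} yields \eqref{cDkerId}. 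The only real obstacle in this programme is the termwise identification in the lemma: the specialised multiple series \eqref{phiKL} carries many $x$--$y$, $x$--$z$, $x$--$w$, $y$--$z$ and $y$--$w$ cross-terms, and the monomial and sign factors have to be tracked precisely for them to combine into the weight of \eqref{Hnm}; everything else is formal, and I would push the bookkeeping into a lemma proved in the appendix.
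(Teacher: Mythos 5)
Your proposal is correct and follows essentially the same route as the paper: the same multiple principal specialisation of Kajihara's formula with $a$-parameters $t$ on the $x$-slots and $q^{-1}$ on the $y$-slots (forcing the corresponding $\gamma$-components into $\{0,1\}$), the same identification of the specialised summand with $B_{\mu,I}$ times the shift ratio $T_{q,x}^{\mu}T_{t,y}^{-I}\Phi/\Phi$, and the same $\cH\leftrightarrow\cD$ symmetry under $(q,t,x,y)\mapsto(t^{-1},q^{-1},y,x)$, $u\mapsto qu$. The only differences are cosmetic: the paper reduces to \eqref{cHkerId} first and works with the normalised form \eqref{KajForm2} (obtained by $c=1$, $b_k\to 1/b_k$, $u\to u/\alpha$), whereas you keep a general common $c$ and a rescaling invariance of \eqref{phiKL} and deduce \eqref{cDkerId} at the end; your bookkeeping of the weights, expansion variable and normalising prefactors all checks out.
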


\begin{proof}
From \eqref{Hnm} and \eqref{cHnm}--\eqref{cDnm}, we have $\cH_{n,m}(x,y;u;q,t)=\cD_{m,n}(y,x;tu;t^{-1},q^{-1})$. Hence, thanks to the manifest symmetry property
$$
\Phi_{n,m;N,M}(x,y;z,w;q,t) = \Phi_{m,n;M,N}(y,x;w,z;t^{-1},q^{-1}),
$$
it suffices to prove the kernel identity \eqref{cHkerId}, say.

Taking $c=1$, $b_k\to 1/b_k$ (which entails $\beta\to1/\beta$) and $u\to u/\alpha$ in \eqref{KajForm} and substituting the expression \eqref{phiKL} for $\phi^{K,L}$, we deduce the identity
\begin{multline}
\label{KajForm2}
\frac{(u/\alpha;q)_\infty}{(u;q)_\infty}\sum_{\gamma\in\N^K} (u/\alpha)^{|\gamma|} \frac{\Delta(q^\gamma X)}{\Delta(X)}\\ \cdot \prod_{i,j=1}^K \frac{(a_jX_i/X_j;q)_{\gamma_i}}{(qX_i/X_j;q)_{\gamma_i}}\cdot \prod_{i=1}^K\prod_{k=1}^L \frac{(X_iY_k/b_k;q)_{\gamma_i}}{(X_iY_k;q)_{\gamma_i}}\\
= \frac{(u/\beta;q)_\infty}{(u;q)_\infty}\sum_{\delta\in\N^L} (u/\beta)^{|\delta|} \frac{\Delta(q^\delta Y)}{\Delta(Y)}\\ \cdot \prod_{k,l=1}^L \frac{(b_lY_k/Y_l;q)_{\delta_k}}{(qY_k/Y_l;q)_{\delta_k}}\cdot \prod_{k=1}^L\prod_{i=1}^K \frac{(Y_kX_i/a_i;q)_{\delta_k}}{(Y_kX_i;q)_{\delta_k}}.
\end{multline}
Choosing $K=n+m$ and $L=N+M$, we specialise the variables according to 
\begin{equation}
\label{spec}
\begin{split}
X_i = x_i,\ a_i = t\ \ (i = 1,\ldots,n);\ \ X_{n+i} = y_i,\ a_{n+i} = q^{-1}\ \ (i = 1,\ldots,m);\\
Y_k = z_k,\ b_k = t\ \ (k = 1,\ldots,N);\ \ Y_{N+k} = w_k,\ b_{N+k} = q^{-1}\ \ (k = 1,\ldots,M).
\end{split}
\end{equation}
Focusing first on the left-hand side of the resulting identity, we note that, due to the presence of the factors
$$
\prod_{i=1}^m \frac{(a_{n+i}X_{n+i}/X_{n+i};q)_{\gamma_{n+i}}}{(qX_{n+i}/X_{n+i};q)_{\gamma_{n+i}}} = \prod_{i=1}^m \frac{(q^{-1};q)_{\gamma_{n+i}}}{(q;q)_{\gamma_{n+i}}},
$$
we only obtain non-zero terms when the components $\gamma_{n+i}$ of $\gamma\in\N^{n+m}$ take the value $0$ or $1$. Hence we may and shall restrict the summation to $n+m$-tuples $\gamma=(\mu,I)$ with $\mu=(\mu_1,\ldots,\mu_n)\in\N^n$ and $I\subseteq\{1,\ldots,m\}$, where, as previously indicated, we identify such a subset $I$ with the $m$-tuple $(I_1,\ldots,I_m)\in\{0,1\}^m$ characterised by $I_i=1$ if and only if $i\in I$. Using the elementary identity $(qa;q)_k/(a;q)_k=(1-q^{\mu_i}a)/(1-a)$, we thus find that the left-hand side of the pertinent identity is given by
\begin{multline*}
\frac{(ut^{-n}q^m;q)_\infty}{(u;q)_\infty}\sum_{\mu\in\N^n}\sum_{I\subseteq\{1,\ldots,m\}} (ut^{-n}q^m)^{|\mu|+|I|} \frac{\Delta(q^\mu x)}{\Delta(x)}\frac{\Delta(q^I y)}{\Delta(y)} \prod_{i=1}^n\prod_{j=1}^m \frac{q^{\mu_i}x_i-q^{I_j}y_j}{x_i-y_j}\\
\cdot \prod_{i=1}^n\Bigg(\prod_{j=1}^n \frac{(tx_i/x_j;q)_{\mu_i}}{(qx_i/x_j;q)_{\mu_i}}\cdot \prod_{j=1}^m \frac{(x_i/qy_j;q)_{\mu_i}}{(qx_i/y_j;q)_{\mu_i}}\Bigg)\cdot \prod_{i\in I}\Bigg(\prod_{j=1}^n \frac{1-ty_i/x_j}{1-qy_i/x_j}\cdot \prod_{j=1}^m \frac{1-y_i/qy_j}{1-qy_i/y_j}\Bigg)\\
\cdot \prod_{i=1}^n\Bigg(\prod_{k=1}^N \frac{(x_iz_k/t;q)_{\mu_i}}{(x_iz_k;q)_{\mu_i}}\cdot \prod_{k=1}^M \frac{1-q^{\mu_i}x_iw_k}{1-x_iw_k}\Bigg)\cdot \prod_{i\in I}\Bigg(\prod_{k=1}^N \frac{1-y_iz_k/t}{1-y_iz_k}\cdot \prod_{k=1}^M \frac{1-qy_iw_k}{1-y_iw_k}\Bigg).
\end{multline*}
We rewrite the factors depending only on $y$,
\begin{equation*}
\begin{split}
\frac{\Delta(q^I y)}{\Delta(y)} \prod_{i\in I}\prod_{j=1}^m \frac{1-y_i/qy_j}{1-qy_i/y_j} &= q^{\binom{|I|}{2}} \prod_{\substack{1\leq i,j\leq m\\ i\in I,j\notin I}} \frac{qy_i-y_j}{y_i-y_j}\cdot \prod_{i\in I} \frac{1-q^{-1}}{1-q}\\& \quad \cdot \prod_{\substack{i,j\in I\\ i<j}} \frac{1-y_i/qy_j}{1-qy_i/y_j}\frac{1-y_j/qy_i}{1-qy_j/y_i}
 \cdot \prod_{\substack{1\leq i,j\leq m\\ i\in I,j\notin I}} \frac{1-y_i/qy_j}{1-qy_i/y_j}\\
 &= (-1)^{|I|}q^{\binom{|I|}{2}-m|I|}\prod_{\substack{1\leq i,j\leq m\\ i\in I,j\notin I}} \frac{y_i-qy_j}{y_i-y_j},
\end{split}
\end{equation*}
and the factors depending on both $x$ and $y$,
\begin{multline*}
\prod_{i=1}^n\prod_{j=1}^m \frac{q^{\mu_i}x_i-q^{I_j}y_j}{x_i-y_j}\cdot \prod_{i=1}^n\prod_{j=1}^m \frac{(x_i/qy_j;q)_{\mu_i}}{(qx_i/y_j;q)_{\mu_i}}\cdot \prod_{i\in I}\prod_{j=1}^n \frac{1-ty_i/x_j}{1-qy_i/x_j}\\
= q^{|I|n} \prod_{i=1}^n \Bigg(\prod_{j\in I} \frac{1-q^{\mu_i-1}x_i/y_j}{1-x_i/y_j}\cdot \prod_{j\notin I} \frac{1-q^{\mu_i}x_i/y_j}{1-x_i/y_j}\Bigg)\\
\cdot \prod_{i=1}^n\prod_{j=1}^m \frac{(x_i/qy_j;q)_{\mu_i}}{(qx_i/y_j;q)_{\mu_i}}\cdot \prod_{i\in I}\prod_{j=1}^n \frac{1-ty_i/x_j}{1-qy_i/x_j}\\
= t^{|I|n} \prod_{i=1}^n \Bigg(\prod_{j\in I} \frac{1-x_i/ty_j}{1-q^{\mu_i}x_i/y_j}\cdot \prod_{j\notin I} \frac{1-x_i/qy_j}{1-q^{\mu_i}x_i/qy_j}\Bigg).
\end{multline*}
In this way, we find that, when specialised at \eqref{spec}, the left-hand side of \eqref{KajForm2} is given by
\begin{multline}
\label{KajFormLHS}
\frac{(ut^{-n}q^m;q)_\infty}{(u;q)_\infty}\sum_{\mu\in\N^n}\sum_{I\subseteq\{1,\ldots,m\}} (ut^{-n}q^m)^{|\mu|}(-u)^{|I|}q^{\binom{|I|}{2}} \frac{\Delta(q^\mu x)}{\Delta(x)}\\
\cdot \prod_{i,j=1}^n \frac{(tx_i/x_j;q)_{\mu_i}}{(qx_i/x_j;q)_{\mu_i}}\cdot \prod_{\substack{1\leq i,j\leq m\\ i\in I,j\notin I}} \frac{y_i-qy_j}{y_i-y_j}\cdot \prod_{i=1}^n \Bigg(\prod_{j\in I} \frac{1-x_i/ty_j}{1-q^{\mu_i}x_i/y_j}\cdot \prod_{j\notin I} \frac{1-x_i/qy_j}{1-q^{\mu_i}x_i/qy_j}\Bigg)\\
\cdot \prod_{i=1}^n\Bigg(\prod_{k=1}^N \frac{(x_iz_k/t;q)_{\mu_i}}{(x_iz_k;q)_{\mu_i}}\cdot \prod_{k=1}^M \frac{1-q^{\mu_i}x_iw_k}{1-x_iw_k}\Bigg)\cdot \prod_{i\in I}\Bigg(\prod_{k=1}^N \frac{1-y_iz_k/t}{1-y_iz_k}\cdot \prod_{k=1}^M \frac{1-qy_iw_k}{1-y_iw_k}\Bigg).
\end{multline}
Using the functional equation $(z;q)_\infty=(1-z)(qz;q)_\infty$, a direct computation yields
\begin{multline*}
\frac{T_{q,x}^\mu T_{t,y}^{-I}\big(\Phi_{n,m;N,M}(x,y;z,w)\big)}{\Phi_{n,m;N,M}(x,y;z,w)} = \prod_{i=1}^n\Bigg(\prod_{k=1}^N \frac{(x_iz_k/t;q)_{\mu_i}}{(x_iz_k;q)_{\mu_i}}\cdot \prod_{k=1}^M \frac{1-q^{\mu_i}x_iw_k}{1-x_iw_k}\Bigg)\\
\cdot \prod_{i\in I}\Bigg(\prod_{k=1}^N \frac{1-y_iz_k/t}{1-y_iz_k}\cdot \prod_{k=1}^M \frac{1-qy_iw_k}{1-y_iw_k}\Bigg).
\end{multline*}
Multiplying this expression with $(t^{-n}q^mu)^{|\mu|} (-u)^{|I|}q^{\binom{|I|}{2}}B_{\mu,I}(x,y)$, we obtain the $(\mu,I)$-term in \eqref{KajFormLHS}. In other words, the specialisation of the left-hand side of \eqref{KajForm2} to \eqref{spec} equals $\Phi_{n,m;N,M}^{-1}\cH_{n,m}(x,y;t^{-1}u)\Phi_{n,m;N,M}$.

We observe that the specialisation of the right-hand side of \eqref{KajForm2} is obtained from its left-hand side by interchanging $(n,m)\leftrightarrow (N,M)$ and $(x,y)\leftrightarrow (z,w)$ as well as relabelling $\mu\to \nu$. Due to the manifest symmetry property
$$
\Phi_{n,m;N,M}(x,y;z,w) = \Phi_{N,M;n,m}(z,w;x,y),
$$
it follows that the right-hand side of \eqref{KajForm2}, when specialised to \eqref{spec}, is given by $\Phi_{n,m;N,M}^{-1}\cH_{N,M}(z,w;t^{-1}u)\Phi_{n,m;N,M}$. This concludes the proof of the kernel identity \eqref{cHkerId}.
\end{proof}

\begin{remark}
By minor modifications of the above proof, we can obtain kernel identities for the parameter regime $|q|,|t|<1$, but only for the deformed NS generating series $\cH_{n,m}$ \eqref{cHnm}. (Indeed, the MR generating series $\cD_{n,m}$ \eqref{cDnm} is well-defined only when $|t|>1$.) More precisely, starting from the identity obtained by taking $x\to tx$ and $y\to ty$ in \eqref{KajForm2} after specialising to \eqref{spec}, it is readily seen that \eqref{cHkerId} holds true for $|q|,|t|<1$ if we replace $\Phi_{n,m;N,M}$ by the meromorphic function
\begin{equation}
\begin{split}
\Psi_{n,m;N,M}(x,y;z,w) &:= \prod_{i=1}^n\prod_{j=1}^N\frac{(tx_iz_j;q)_\infty}{(x_iz_j;q)_\infty}\cdot \prod_{i=1}^m\prod_{j=1}^M\frac{(qty_iw_j;t)_\infty}{(ty_iw_j;t)_\infty}\\
&\quad \cdot \prod_{i=1}^n\prod_{j=1}^M (1-tx_iw_j)\cdot \prod_{i=1}^m\prod_{j=1}^N (1-ty_iz_j).
\end{split}
\end{equation}
\end{remark}

\section{Applications}
\label{Sec:Apps}
In this section, we detail a number of applications of Theorem \ref{Thm:kerIds}. They include the commutativity of the deformed NS and MR operators; a derivation of their joint eigenfunctions and eigenvalues; an explicit construction of a Harish-Chandra type isomorphism, characterising the commutative algebra generated by the deformed NS (and/or MR) operators; as well as a generalisation of the restriction picture for the first order operators in \cite{SV09} to all higher order operators.

We note that intermediate computations, involving kernel functions and generating series, may require restrictions on $q,t$ of the form $|q|<1$ and/or $|t|>1$. To ease the exposition, we shall not spell out the specific restrictions that are needed whenever they are easily identified from the context at hand.

\subsection{Commutativity}
\label{SubSec:Comm}
We find it convenient to work with the difference operators $\cH_{n,m}^{r}(x,y)$ ($r\in\N$) and $\cD_{n,m}^{r}(x,y)$ ($r\in\N$) defined as the coefficients of $u^r$ in the power series expansion of $\cH_{n,m}(x,y;u)$ \eqref{cHnm} and $\cD_{n,m}(x,y;u)$ \eqref{cDnm}, respectively:
\begin{equation}
\label{cHnmk}
\cH_{n,m}(x,y;u) = \sum_{r=0}^\infty u^r \cH_{n,m}^{r}(x,y),
\end{equation}
and
\begin{equation}
\label{cDnmk}
\cD_{n,m}(x,y;u) = \sum_{r=0}^\infty (-u)^r \cD_{n,m}^{r}(x,y).
\end{equation}

We begin by recording the following important technical result.

\begin{lemma}
\label{Lemma:diffop}
Let $L_{n,m}(x,y)$ be a difference operator in $(x,y)$ of the form
$$
L_{n,m}(x,y) = \sum_{\substack{\mu\in\N^n, \nu\in\N^m\\ |\mu|+|\nu|\leq d}}a_{\mu,\nu}(x,y) T_{q,x}^\mu T_{t,y}^{-\nu},
$$
with meromorphic coefficients $a_{\mu,\nu}(x,y)$ and $d\in\N$. If $L_{n,m}(x,y)\Phi_{n,m;N,0}(x,y;z)=0$ for all $N\in\N^*$, then $L_{n,m}(x,y)\equiv 0$ as a difference operator.
\end{lemma}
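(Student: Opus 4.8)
The plan is to rewrite the hypothesis as a linear-independence statement for the factors by which the shift operators $T_{q,x}^{\mu}T_{t,y}^{-\nu}$ rescale the kernel function, and then to establish that independence by a Vandermonde argument in which the number of $z$-variables is taken large. Carrying out the computation in the proof of Theorem~\ref{Thm:kerIds} with $M=0$, and allowing an arbitrary $\nu\in\N^{m}$ in place of the subset $I\subseteq\{1,\dots,m\}$ occurring there, one gets
\begin{equation*}
\frac{T_{q,x}^{\mu}T_{t,y}^{-\nu}\,\Phi_{n,m;N,0}(x,y;z)}{\Phi_{n,m;N,0}(x,y;z)}=\prod_{j=1}^{N}F_{\mu,\nu}(x,y;z_{j}),\qquad F_{\mu,\nu}(x,y;w):=\prod_{i=1}^{n}\frac{(t^{-1}x_{i}w;q)_{\mu_{i}}}{(x_{i}w;q)_{\mu_{i}}}\prod_{i=1}^{m}\frac{1-t^{-\nu_{i}}y_{i}w}{1-y_{i}w}.
\end{equation*}
Since $\Phi_{n,m;N,0}\not\equiv 0$, dividing the assumed relation $L_{n,m}(x,y)\Phi_{n,m;N,0}=0$ by $\Phi_{n,m;N,0}$ turns the hypothesis into $\sum_{|\mu|+|\nu|\le d}a_{\mu,\nu}(x,y)\prod_{j=1}^{N}F_{\mu,\nu}(x,y;z_{j})=0$ for every $N\in\N^{*}$. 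It therefore suffices to show that, for $N$ large enough, the functions $\{\prod_{j=1}^{N}F_{\mu,\nu}(x,y;z_{j})\}_{|\mu|+|\nu|\le d}$ are linearly independent over the field of meromorphic functions in $(x,y)$; fixing a point $(x,y)$ off the poles of the finitely many $a_{\mu,\nu}$ and off a suitable proper analytic locus, this reduces to $\C$-linear independence there of the rational functions $(z_{1},\dots,z_{N})\mapsto\prod_{j=1}^{N}F_{\mu,\nu}(x,y;z_{j})$, and letting $(x,y)$ vary then forces every $a_{\mu,\nu}\equiv 0$.

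For the independence I would use the elementary fact that nonzero one-variable rational functions $G_{1},\dots,G_{p}$ that are \emph{pairwise non-proportional} have the property that the products $\prod_{j=1}^{p-1}G_{i}(w_{j})$, $i=1,\dots,p$, are linearly independent: evaluating a relation $\sum_{i}c_{i}G_{i}(w_{1})\cdots G_{i}(w_{p-1})=0$ at the $p$ configurations $(w_{1},\dots,w_{p-1})=(\underbrace{a,\dots,a}_{p-1-\ell},\underbrace{b,\dots,b}_{\ell})$, $\ell=0,\dots,p-1$, produces a linear system whose matrix equals $(r_{i}^{\,\ell})_{\ell,i}\cdot\operatorname{diag}(G_{i}(a)^{\,p-1})$ with $r_{i}=G_{i}(b)/G_{i}(a)$, i.e.\ a Vandermonde matrix up to nonzero diagonal rescaling. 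For $a$ with all $G_{i}(a)\ne0$ and then $b$ generic the $r_{i}$ are pairwise distinct --- this is the only place non-proportionality is used, via the fact that $b\mapsto G_{k}(a)G_{i}(b)-G_{i}(a)G_{k}(b)$ is a nontrivial combination of the independent functions $G_{i},G_{k}$ --- so the matrix is invertible and all $c_{i}=0$. Taking $p$ to be the number of pairs $(\mu,\nu)$ with $|\mu|+|\nu|\le d$ and $N=p-1$, it remains to verify that the $F_{\mu,\nu}(x,y;\,\cdot\,)$ are pairwise non-proportional for generic $(x,y)$, which I would do by exhibiting in each case a \emph{zero} distinguishing them: if $\mu\ne\mu'$ with, say, $\mu_{i_{0}}>\mu'_{i_{0}}$, then $w=tq^{\,1-\mu_{i_{0}}}/x_{i_{0}}$ is a zero of $F_{\mu,\nu}$ coming from the numerator factor $(t^{-1}x_{i_{0}}w;q)_{\mu_{i_{0}}}$ but --- invoking $|q|<1<|t|$, so that $t$ is not a nonnegative integer power of $q$, together with finitely many excluded multiplicative relations among $x_{1},\dots,x_{n},y_{1},\dots,y_{m}$ --- neither a zero nor a pole of $F_{\mu',\nu'}$; and if $\mu=\mu'$ but $\nu_{i_{0}}>\nu'_{i_{0}}$, then $w=t^{\nu_{i_{0}}}/y_{i_{0}}$ is a zero of $F_{\mu,\nu}$ which, under the same genericity, is neither a zero nor a pole of $F_{\mu,\nu'}$. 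This gives $L_{n,m}\equiv0$.

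I expect the bulk of the work to be in that last verification: one must list the finitely many multiplicative relations $x_{i}/x_{j},\,y_{i}/y_{j},\,x_{i}/y_{j}\in q^{k}t^{\ell}$ (with $|k|,|\ell|$ bounded in terms of $d$) to be excluded so that the displayed zeros of the $F_{\mu,\nu}$ are genuine and unshared, and then check that the points $a,b$ in the Vandermonde step can still be chosen avoiding all poles of the $F_{\mu,\nu}$ while keeping the $r_{i}$ distinct; the role of $|q|<1<|t|$ is precisely to rule out the cancellations in the $q$-shifted Pochhammer symbols that would occur were $t$ a small positive power of $q$. The one conceptual point worth flagging is that one cannot take $N=1$: the one-variable functions $F_{\mu,\nu}(x,y;\,\cdot\,)$ are in general only pairwise non-proportional, not linearly independent, and it is exactly the passage to $p-1$ variables that turns non-proportionality into linear independence --- which is why the hypothesis is imposed for all $N\in\N^{*}$.
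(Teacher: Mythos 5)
Your argument is correct, and while it starts the same way as the paper's proof, it diverges at the decisive step. Both proofs first divide $L_{n,m}\Phi_{n,m;N,0}=0$ by the kernel, turning the hypothesis into $\sum_{|\mu|+|\nu|\le d}a_{\mu,\nu}(x,y)\prod_{j=1}^{N}F_{\mu,\nu}(x,y;z_j)=0$ with exactly your factors $F_{\mu,\nu}$. The paper then kills the coefficients by an explicit, triangular specialisation of the $z$-variables: for each target index $(\alpha,\beta)$ it takes $N=n+dm-|\beta|$ and evaluates at the point $z_{\alpha,\beta}$ built from $t/q^{\alpha_i}x_i$ and $t^{\beta_j+1}/y_j,\ldots,t^{d}/y_j$, so that ``diagonal'' Pochhammer factors annihilate every term with $\mu_i>\alpha_i$ or $\nu_j>\beta_j$; only componentwise smaller indices survive, and a minimal-degree contradiction finishes the proof, with essentially no genericity discussion in $(x,y)$ beyond the non-vanishing of $\varphi_{\alpha,\beta}(x,y;z_{\alpha,\beta})$. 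You instead prove a general linear-independence lemma (products over $p-1$ separate variables of pairwise non-proportional one-variable rational functions are linearly independent), via evaluation at configurations $(a,\ldots,a,b,\ldots,b)$ and a Vandermonde determinant, and then verify pairwise non-proportionality of the $F_{\mu,\nu}(x,y;\cdot)$ for $(x,y)$ off finitely many resonance hypersurfaces; your use of $|q|<1<|t|$ (so $t\notin q^{\N}$) is exactly the regime in which $\Phi_{n,m;N,M}$ is defined, and your remark that $N=1$ cannot work is correct, since the $F_{\mu,\nu}$ are typically linearly dependent in a single variable. The trade-off: your route yields a reusable independence criterion and avoids having to design the clever specialisation, at the cost of a much larger $N$ (one less than the number of multi-indices with $|\mu|+|\nu|\le d$, versus the paper's $N\le n+dm$), an auxiliary genericity-in-$(x,y)$ step, and the bookkeeping of excluded multiplicative relations, all of which you correctly identify and which are routine to complete.
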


\begin{proof}
The proof is given in Appendix \ref{App:diffop}.
\end{proof}

Comparing \eqref{Hn} with \eqref{Hnm} and \eqref{cHnm}, we see that
\begin{equation*}
\begin{split}
\cH_{N,0}(z;u) &= \frac{(t^{1-N}u;q)_\infty}{(tu;q)_\infty}H_N(z;t^{1-N}u)\\
&= \frac{(t^{1-N}u;q)_\infty}{(tu;q)_\infty}\sum_{r=0}^\infty \big(t^{1-N}u\big)^r H_N^r(z).
\end{split}
\end{equation*}
From the commutativity of the NS operators $H_N^r$, we thus get
$$
\left[\cH_{N,0}(z;u),\cH_{N,0}(z;v)\right] = 0.
$$
Taking $M=0$ in \eqref{cHkerId}, we can now deduce
\begin{multline*}
\cH_{n,m}(x,y;u)\cH_{n,m}(x,y;v)\Phi_{n,m;N,0}(x,y;z)\\
= \cH_{N,0}(z;v)\cH_{N,0}(z;u)\Phi_{n,m;N,0}(x,y;z)\\
= \cH_{N,0}(z;u)\cH_{N,0}(z;v)\Phi_{n,m;N,0}(x,y;z)\\
= \cH_{n,m}(x,y;v)\cH_{n,m}(x,y;u)\Phi_{n,m;N,0}(x,y;z),
\end{multline*}
so that
$$
\left[\cH_{n,m}(x,y;u),\cH_{n,m}(x,y;v)\right]\Phi_{n,m;N,0}(x,y;z) = 0,
$$
or equivalently
$$
\left[\cH_{n,m}^{r}(x,y),\cH_{n,m}^{s}(x,y)\right]\Phi_{n,m;N,0}(x,y;z) = 0\ \ \ (r,s\in\N).
$$
Hence, fixing $r,s\in\N$ and letting
$$
L_{n,m}(x,y) = \left[\cH_{n,m}^{r}(x,y),\cH_{n,m}^{s}(x,y)\right],
$$
we have $L_{n,m}(x,y)\Phi_{n,m;N,0}(x,y;z)=0$ for all $N\in\N^*$. It follows from Lemma \ref{Lemma:diffop} that $L_{n,m}(x,y)\equiv 0$, i.e.~that $\cH_{n,m}^{r}$ and $\cH_{n,m}^{s}$ commute as difference operators.

Repeating the above reasoning with either one or both of $\cH_{n,m}(x,y;u)$ and $\cH_{n,m}(x,y;v)$ replaced by $\cD_{n,m}(x,y;u)$ and $\cD_{n,m}(x,y;v)$, respectively, we arrive at the following result.

\begin{theorem}
The deformed NS operators $\cH_{n,m}^{r}$ ($r\in\N$) and MR operators $\cD_{n,m}^{r}$ ($r\in\N$) all commute with each other:
$$
\left[\cH_{n,m}^{r},\cH_{n,m}^{s}\right] = \left[\cD_{n,m}^{r},\cD_{n,m}^{s}\right] = \left[\cH_{n,m}^{r},\cD_{n,m}^{s}\right] = 0\ \ \ (r,s\in\N).
$$
\end{theorem}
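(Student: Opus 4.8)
The plan is to prove the remaining two families of commutation relations, $[\cD_{n,m}^r,\cD_{n,m}^s]=0$ and $[\cH_{n,m}^r,\cD_{n,m}^s]=0$, by the very same three-step scheme already used above for $[\cH_{n,m}^r,\cH_{n,m}^s]$: (i) specialise to $M=0$ so that the operators attached to the target variables become (rescaled) \emph{undeformed} NS and MR operators in a single set of variables; (ii) feed in the classical commutativity of those undeformed operators to slide the relevant commutator across the kernel function $\Phi_{n,m;N,0}(x,y;z)$; and (iii) invoke Lemma \ref{Lemma:diffop} to upgrade ``annihilates $\Phi_{n,m;N,0}$ for all $N$'' to ``vanishes identically as a difference operator''. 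The only extra structural input beyond the $\cH\cH$-argument is that Theorem \ref{Thm:kerIds} furnishes \emph{both} kernel identities \eqref{cHkerId} and \eqref{cDkerId} for one and the same kernel $\Phi_{n,m;N,M}$.

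For $[\cD_{n,m}^r,\cD_{n,m}^s]=0$ I would put $M=0$ in \eqref{cDkerId}. Using the recorded special case $D_{N,0}(z;u)=D_N(z;t^{1-N}u)$ together with \eqref{cDnm}, one sees that $\cD_{N,0}(z;u)$ coincides with the generating series $D_N(z;t^{1-N}u)$ of the ordinary MR operators up to a scalar prefactor depending on $u$ alone; hence $[\cD_{N,0}(z;u),\cD_{N,0}(z;v)]=0$ by Ruijsenaars' \cite{Rui87} (equivalently Macdonald's \cite{Mac95}) commutativity of the $D_N^r$. The same chain of identities as in the $\cH\cH$-case --- applying \eqref{cDkerId} to each factor, exploiting the manifest symmetry of $\Phi_{n,m;N,0}$ under $(x,y)\leftrightarrow z$, and using that operators in disjoint variables commute --- then yields $[\cD_{n,m}(x,y;u),\cD_{n,m}(x,y;v)]\,\Phi_{n,m;N,0}(x,y;z)=0$ for every $N\in\N^*$, and Lemma \ref{Lemma:diffop} closes the case. (As a cross-check, this family also drops out of the $\cH\cH$-case via the identity $\cH_{n,m}(x,y;u;q,t)=\cD_{m,n}(y,x;tu;t^{-1},q^{-1})$ noted at the outset of the proof of Theorem \ref{Thm:kerIds}.)

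For the genuinely new mixed relation $[\cH_{n,m}^r,\cD_{n,m}^s]=0$ I would again take $M=0$ and compute
\[
\cH_{n,m}(x,y;u)\,\cD_{n,m}(x,y;v)\,\Phi_{n,m;N,0}
=\cD_{N,0}(z;v)\,\cH_{N,0}(z;u)\,\Phi_{n,m;N,0},
\]
first moving $\cD_{n,m}(x,y;v)$ to $\cD_{N,0}(z;v)$ via \eqref{cDkerId} and then $\cH_{n,m}(x,y;u)$ to $\cH_{N,0}(z;u)$ via \eqref{cHkerId}, and using that the $(x,y)$-operator $\cH_{n,m}(x,y;u)$ commutes with the $z$-operator $\cD_{N,0}(z;v)$. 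Reversing the order of the two initial operators produces $\cH_{N,0}(z;u)\,\cD_{N,0}(z;v)\,\Phi_{n,m;N,0}$ instead, so that $[\cH_{n,m}(x,y;u),\cD_{n,m}(x,y;v)]\,\Phi_{n,m;N,0}$ equals $\pm[\cH_{N,0}(z;u),\cD_{N,0}(z;v)]\,\Phi_{n,m;N,0}$. Since $\cH_{N,0}(z;u)$ and $\cD_{N,0}(z;v)$ are, up to scalars in $u$ and $v$ respectively, the generating series of the ordinary NS and MR operators in the single set of variables $z$, Noumi and Sano's theorem \cite{NS20} that $\{H_N^r\}_{r\in\N}$ and $\{D_N^r\}_{r=1}^N$ generate one and the same commutative algebra gives $[\cH_{N,0}(z;u),\cD_{N,0}(z;v)]=0$. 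Extracting coefficients of $u^rv^s$ and applying Lemma \ref{Lemma:diffop} as before then yields $[\cH_{n,m}^r,\cD_{n,m}^s]\equiv 0$, which completes the theorem.

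I expect the main obstacle to be bookkeeping rather than anything conceptual: one must make sure the hypotheses of Lemma \ref{Lemma:diffop} really apply to the commutators, i.e.\ that each $\cH_{n,m}^r$ and $\cD_{n,m}^s$, and hence any product or commutator of them, is a \emph{finite-order} difference operator of the admissible shape $\sum_{|\mu|+|\nu|\le d}a_{\mu,\nu}(x,y)\,T_{q,x}^\mu T_{t,y}^{-\nu}$. This should be transparent from \eqref{Hnm}--\eqref{BmuI} and the companion formula built from \eqref{AImu}: only shift exponents with $|\mu|+|I|\le r$ enter the $u^r$-coefficient, every $x$-shift is a $q$-shift and every $y$-shift a $t$-shift, so composing two such operators merely adds the exponent vectors and keeps their total degree bounded. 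A second routine point is that commutativity of difference operators with meromorphic coefficients is a rational identity in $q$ and $t$, so although the kernel manipulations require $0<|q|<1<|t|$, the resulting relations persist for all $q,t\in\C^*$ that are not roots of unity.
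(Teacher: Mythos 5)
Your proposal is correct and follows essentially the same route as the paper: the paper proves the $\cH\cH$ case via the $M=0$ kernel identity plus Lemma \ref{Lemma:diffop} and then simply states that the $\cD\cD$ and mixed cases follow by ``repeating the above reasoning'' with $\cH_{n,m}$ replaced by $\cD_{n,m}$, which is exactly what you carry out, supplying the same classical inputs (Ruijsenaars/Macdonald commutativity of the $D_N^r$ and Noumi--Sano's result that the $H_N^r$ and $D_N^r$ generate one commutative algebra). Your closing remarks on the finite-order shape required by Lemma \ref{Lemma:diffop} and on removing the intermediate restrictions on $q,t$ are consistent with the paper's (tacit) treatment of these points.
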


\subsection{Joint eigenfunctions}
\label{SubSec:EigFuncs}
Next, we show that the deformed MR and NS operators are simultaneously diagonalised by the so-called super Macdonald polynomials, introduced in \cite{SV09} as certain restrictions of Macdonald symmetric functions. Here we pursue a somewhat different approach: From the kernel identities in Theorem \ref{Thm:kerIds} and well-known results on ordinary Macdonald polynomials, we recover an expression for the super Macdonald polynomials in terms of the ordinary Macdonald polynomials and deduce corresponding eigenvalue equations with explicit expressions for the eigenvalues.

For notation and terminology regarding symmetric functions in general and Macdonald symmetric functions (and polynomials) in particular, we follow Macdonald's book \cite{Mac95}.

Unless otherwise specified, we assume throughout this and the following sections that
\begin{equation}
\label{nonSpec}
q^it^j\neq 1\ \text{for all}\ i,j\in\N \ \text{such that}\ i+j\geq 1,
\end{equation}
which, in particular, ensures that the Macdonald functions are well-defined. Following Sergeev and Veselov \cite{SV09}, we use the terminology {\it non-special} for values of $q,t\in\C^*$ satisfying \eqref{nonSpec}.

Setting $M=0$ in the kernel function \eqref{kerFunc}, we define polynomials $SP_\lambda(x,y)=SP_\lambda(x,y;q,t)$ as the appropriately scaled coefficients of the (dual) Macdonald polynomials $Q_\lambda(z)=Q_\lambda(z;q,t)$ in its power series expansion in the variables $z_1,\ldots,z_N$:
\begin{equation}
\label{SPDef}
\Phi_{n,m;N,0}(x,y;z) = \sum_{l(\lambda)\leq N} t^{-|\lambda|}SP_\lambda(x,y)Q_\lambda(z).
\end{equation}
Assuming $N\geq n$, we recall from Sections VI.4--5 in \cite{Mac95} that
$$
\prod_{i=1}^n\prod_{j=1}^N \frac{(x_iz_j;q)_\infty}{(t^{-1}x_iz_j;q)_\infty} = \sum_{l(\mu)\leq n} t^{-|\mu|}P_\mu(x)Q_\mu(z),
$$
$$
\prod_{i=1}^m\prod_{j=1}^N (1-y_iz_j) = \sum_{\nu\subseteq(m^N)} (-1)^{|\nu|} Q_{\nu^\prime}(y;t,q)Q_\nu(z;q,t).
$$
It follows that
\begin{multline*}
\Phi_{n,m;N,0}(x,y;z;q,t)\\
= \sum_{l(\mu)\leq n}\sum_{\nu\subseteq(m^N)} t^{-|\mu|}(-1)^{|\nu|}P_\mu(x;q,t)Q_{\nu^\prime}(y;t,q)Q_\mu(z;q,t)Q_\nu(z;q,t).
\end{multline*}
Letting $\hat{c}_{\mu,\nu}^\lambda(q,t)$ denote the Littlewood--Richardson type coefficients for $Q_\lambda(z;q,t)$,
\begin{equation}
\label{LRCoeffs}
Q_\mu(z;q,t)Q_\nu(z;q,t) = \sum_{l(\lambda)\leq N} \hat{c}_{\mu,\nu}^\lambda(q,t) Q_\lambda(z;q,t),
\end{equation}
we get
\begin{multline*}
\Phi_{n,m;N,0}(x,y;z;q,t)\\
= \sum_{l(\lambda)\leq N}t^{-|\lambda|}\Bigg(\sum_{l(\mu)\leq n}\sum_{\nu\subseteq(m^N)} (-t)^{|\nu|}\hat{c}_{\mu,\nu}^\lambda(q,t) P_\mu(x;q,t)Q_{\nu^\prime}(y;t,q)\Bigg) Q_\lambda(z;q,t),
\end{multline*}
where we have used the fact that $\hat{c}_{\mu,\nu}^\lambda(q,t)\neq 0$ only if $|\lambda|=|\mu|+|\nu|$, which is a direct consequence of $Q_\lambda(z)$ being a homogeneous polynomial of degree $|\lambda|$.

Since $\hat{c}_{\mu,\nu}^\lambda(q,t)=0$ unless $\mu,\nu\subseteq\lambda$ (cf.~Section VI.7 in \cite{Mac95}), $l(\lambda)\leq N$ and $Q_{\nu^\prime}((y_1,\ldots,y_m);t,q)\equiv 0$ if $\nu_1>m$, we can replace the summation criterion $\nu\subseteq(m^N)$ by $\nu\subseteq\lambda$, say. Comparing the resulting expansion with \eqref{SPDef}, we see that
\begin{equation}
\label{SPExp1}
SP_\lambda(x,y;q,t) = \sum_{l(\mu)\leq n}\sum_{\nu\subseteq\lambda} (-t)^{|\nu|}\hat{c}_{\mu,\nu}^\lambda(q,t) P_\mu(x;q,t)Q_{\nu^\prime}(y;t,q),
\end{equation}
where $\lambda$ can be any partition, since $N(\geq n)$ can be chosen arbitrarily large.

Using the skew Macdonald polynomials
$$
P_{\lambda/\nu}(x;q,t) = \sum_\mu \hat{c}_{\mu,\nu}^\lambda(q,t) P_\mu(x;q,t),
$$
we can rewrite this expression as
\begin{equation}
\label{SPExp2}
SP_\lambda(x,y;q,t) = \sum_{\nu\subseteq\lambda} (-t)^{|\nu|}P_{\lambda/\nu}(x;q,t)Q_{\nu^\prime}(y;t,q).
\end{equation}

A direct comparison with Eq.~(22) in Sergeev and Veselov's paper \cite{SV09} reveals that these polynomials are precisely the so-called super Macdonald polynomials, as defined by Eq.~(23) in {\it loc.~cit.}. (Note their use of the inverse $t^{-1}$ of the parameter $t$ used here and that $H(\lambda,q,t)/H(\lambda^\prime,t,q)=(-t)^{-|\lambda|}b_{\lambda^\prime}(t^{-1},q)$, cf.~the equation above (6.19) in Chapter VI of \cite{Mac95}.)

In analogy with Macdonald's definition of $Q_\lambda$, we let
$$
SQ_\lambda(x,y) = b_\lambda SP_\lambda(x,y)
$$
with
\begin{equation}
\label{blam}
b_\lambda = \prod_{s\in\lambda} \frac{1-q^{a(s)}t^{l(s)+1}}{1-q^{a(s)+1}t^{l(s)}},
\end{equation}
where $a(s)=\lambda_i-j$ and $l(s)=\lambda^\prime_j-i$ denote the arm- and leg length of $s=(i,j)\in\lambda$ respectively.

In the following proposition, we record two symmetry properties of the super Macdonald polynomials that we have occasion to invoke below.

\begin{proposition}
For all $\lambda\in H_{n,m}$ and non-special $q,t$, we have
\begin{equation}
\label{SPSym1}
SP_\lambda(x,y;q^{-1},t^{-1}) = SP_\lambda(x,q^{-1}t^{-1}y;q,t),
\end{equation}
\begin{equation}
\label{SPSym2}
SQ_{\lambda^\prime}(y,x;t^{-1},q^{-1}) = (-q)^{-|\lambda|}SP_\lambda(x,y;q,t).
\end{equation}
\end{proposition}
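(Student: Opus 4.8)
The plan is to derive both identities from the explicit expansion \eqref{SPExp2} together with two classical facts about Macdonald polynomials (Chapter VI of \cite{Mac95}): the invariance $P_\lambda(\,\cdot\,;q^{-1},t^{-1})=P_\lambda(\,\cdot\,;q,t)$ and Macdonald's duality $\omega_{q,t}\big(P_\lambda(\,\cdot\,;q,t)\big)=Q_{\lambda'}(\,\cdot\,;t,q)$, where $\omega_{q,t}$ is the algebra automorphism of $\Lambda$ with $\omega_{q,t}(p_r)=(-1)^{r-1}\tfrac{1-q^r}{1-t^r}p_r$. From the first fact, the rescaling rule $b_\mu(a^{-1},b^{-1})=(a/b)^{|\mu|}b_\mu(a,b)$ and \eqref{LRCoeffs} one also obtains $Q_\mu(\,\cdot\,;q^{-1},t^{-1})=(q/t)^{|\mu|}Q_\mu(\,\cdot\,;q,t)$, the invariance $\hat{c}_{\mu,\nu}^\lambda(q^{-1},t^{-1})=\hat{c}_{\mu,\nu}^\lambda(q,t)$, and hence the invariance of the skew polynomials $P_{\lambda/\nu}$ under $(q,t)\mapsto(q^{-1},t^{-1})$. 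It is convenient to reinterpret \eqref{SPExp2} as the statement $SP_\lambda(x,y;q,t)=\varphi^{q,t}\big(P_\lambda(\,\cdot\,;q,t)\big)$, where $\varphi^{q,t}$ is the algebra homomorphism from $\Lambda$ to polynomials in $(x,y)$ determined by $p_r\mapsto p_r(x)-\tfrac{t^r(1-q^r)}{1-t^r}p_r(y)$ (indeed \eqref{SPExp2} is precisely the branching of $P_\lambda$ into the alphabets $x$ and $y$, with $\omega_{q,t}$ and a rescaling applied on the $y$-alphabet); by linearity in the scalar $b_\mu$ this also gives $SQ_\mu(u,v;a,b)=\varphi^{a,b}_{(u,v)}\big(Q_\mu(\,\cdot\,;a,b)\big)$. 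One may instead simply invoke Eqs.~(22)--(23) of \cite{SV09}.

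For \eqref{SPSym1} I would work directly from \eqref{SPExp2} at the parameters $(q^{-1},t^{-1})$: the invariance of $P_{\lambda/\nu}$, the relation $Q_{\nu'}(y;t^{-1},q^{-1})=(t/q)^{|\nu|}Q_{\nu'}(y;t,q)$ (a special case of the rescaling rule above), and homogeneity of $Q_{\nu'}$ in $y$ of degree $|\nu|$ together collapse the $\nu$-th summand on each side of \eqref{SPSym1} to $(-1/q)^{|\nu|}P_{\lambda/\nu}(x;q,t)Q_{\nu'}(y;t,q)$, so \eqref{SPSym1} follows by termwise comparison.

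For \eqref{SPSym2} a termwise comparison fails, because the passages $\lambda\mapsto\lambda'$ and $x\leftrightarrow y$ interchange the roles of $P$ and $Q$; here I would use the homomorphism picture. Writing $SQ_{\lambda'}(y,x;t^{-1},q^{-1})=\varphi^{t^{-1},q^{-1}}_{(y,x)}\big(Q_{\lambda'}(\,\cdot\,;t^{-1},q^{-1})\big)$, combining $P_\lambda(\,\cdot\,;q^{-1},t^{-1})=P_\lambda(\,\cdot\,;q,t)$ with Macdonald duality to get $Q_{\lambda'}(\,\cdot\,;t^{-1},q^{-1})=\omega_{q^{-1},t^{-1}}\big(P_\lambda(\,\cdot\,;q,t)\big)$, and noting that $P_\lambda(\,\cdot\,;q,t)$ is homogeneous of degree $|\lambda|$ while $\varphi$, $\omega$ and rescalings are all graded algebra homomorphisms, the identity reduces to checking that $\varphi^{t^{-1},q^{-1}}_{(y,x)}\circ\omega_{q^{-1},t^{-1}}$ and $(-q)^{-\deg}\,\varphi^{q,t}_{(x,y)}$ agree on a single power sum $p_r$. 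A short computation, using $\omega_{q^{-1},t^{-1}}(p_r)=(-1)^{r-1}(t/q)^r\tfrac{1-q^r}{1-t^r}p_r$ and the explicit forms of the two $\varphi$'s, shows that both send $p_r$ to $(-q)^{-r}p_r(x)+(-1)^{r-1}(t/q)^r\tfrac{1-q^r}{1-t^r}p_r(y)$; applying this to $P_\lambda(\,\cdot\,;q,t)$ and recalling $SP_\lambda(x,y;q,t)=\varphi^{q,t}_{(x,y)}\big(P_\lambda(\,\cdot\,;q,t)\big)$ yields \eqref{SPSym2}.

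The main obstacle is not conceptual but one of bookkeeping: three commuting involutions --- inversion of $(q,t)$, Macdonald's $\omega$, and transposition of diagrams --- act in concert with the variable swap $x\leftrightarrow y$, and the degree-dependent scalar $(-q)^{-|\lambda|}$ in \eqref{SPSym2} must be tracked by reducing to power sums, which is legitimate precisely because every map involved is an algebra homomorphism. A secondary issue, should one wish to avoid quoting \cite{SV09}, is to justify the homomorphism description of $SP_\lambda$ carefully; that is immediate from \eqref{SPExp2} together with the branching and skew-Cauchy identities for Macdonald polynomials in \cite{Mac95}.
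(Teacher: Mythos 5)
Your argument is correct. For \eqref{SPSym1} it is essentially the paper's proof: both rest on $P_\lambda(\cdot;q^{-1},t^{-1})=P_\lambda(\cdot;q,t)$, the rescaling $Q_\lambda(\cdot;q^{-1},t^{-1})=(q/t)^{|\lambda|}Q_\lambda(\cdot;q,t)$, the resulting invariance of $\hat{c}_{\mu,\nu}^\lambda$, and homogeneity of $Q_{\nu'}$ in $y$; whether one collapses the sum \eqref{SPExp1} or \eqref{SPExp2} termwise is immaterial. For \eqref{SPSym2}, however, you take a genuinely different route. The paper stays inside the expansion \eqref{SPExp1} and pushes through the duality relation $\hat{c}_{\mu',\nu'}^{\lambda'}(t,q)=\hat{c}_{\mu,\nu}^\lambda(q,t)\,b_\lambda/(b_\mu b_\nu)$ together with the explicit identities $b_{\lambda'}(t^{-1},q^{-1})=(q^{-1}t)^{|\lambda|}/b_\lambda(q,t)$ and $b_\mu(q,t)=1/b_{\mu'}(t,q)$, i.e.\ it does the bookkeeping at the level of Littlewood--Richardson coefficients and arm/leg products. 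You instead encode $SP_\lambda$ as $\varphi^{q,t}_{(x,y)}(P_\lambda)$ for the specialisation homomorphism $p_r\mapsto p_r(x)+\frac{1-q^r}{1-t^{-r}}p_r(y)$ (which is exactly the paper's $\varphi_{n,m;q,t}$ from Subsection \ref{SubSec:Res}, equivalently Eq.~(23) of \cite{SV09}), convert $Q_{\lambda'}(\cdot;t^{-1},q^{-1})$ into $\omega_{q^{-1},t^{-1}}(P_\lambda(\cdot;q,t))$ via Macdonald duality, and reduce the whole identity to a one-line check on a single power sum $p_r$; I verified that both sides indeed send $p_r$ to $(-q)^{-r}p_r(x)+(-1)^{r-1}(t/q)^r\frac{1-q^r}{1-t^r}p_r(y)$, and the scalar $(-q)^{-|\lambda|}$ then falls out of homogeneity. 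What your approach buys is conceptual transparency and freedom from the $b_\lambda$/$\hat{c}$ gymnastics; what it costs is the prerequisite that $SP_\lambda=\varphi^{q,t}(P_\lambda)$, which the paper only makes explicit later (it follows from \eqref{SPExp2} together with $\varphi_{n,m;q,t}(\Pi)=\Pi_{n,m;\infty}$, or can be quoted from \cite{SV09}), so if this proof were placed where the proposition sits you would need to import that fact up front, as you acknowledge. Both proofs are complete and correct.
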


\begin{proof}
From (4.14)(iv) in Chapter VI of \cite{Mac95}, we recall
\begin{equation}
\label{PQIds}
P_\lambda(x;q^{-1},t^{-1}) = P_\lambda(x;q,t),\ \ \ Q_\lambda(x;q^{-1},t^{-1}) = (qt^{-1})^{|\lambda|}Q_\lambda(x;q,t);
\end{equation}
and using (7.3) in {\it loc.~cit.}, we thus infer
\begin{equation}
\label{hatcId1}
\hat{c}_{\mu,\nu}^\lambda(q^{-1},t^{-1}) = \hat{c}_{\mu,\nu}^\lambda(q,t).
\end{equation}
\begin{equation}
\label{hatcId2}
\hat{c}_{\mu^\prime,\nu^\prime}^{\lambda^\prime}(t^{-1},q^{-1}) = \hat{c}_{\mu^\prime,\nu^\prime}^{\lambda^\prime}(t,q) = \hat{c}_{\mu,\nu}^\lambda(q,t)b_\lambda(q,t)/b_\mu(q,t)b_\nu(q,t).
\end{equation}

Keeping \eqref{SPExp1} in mind, we see that \eqref{SPSym1} is a simple consequence of \eqref{PQIds}, \eqref{hatcId1} and the fact that $Q_{\nu^\prime}(y)$ is a homogeneous polynomial of degree $|\mu|$. Furthermore, appealing to \eqref{PQIds} as well as \eqref{hatcId2}, we deduce
\begin{multline*}
SQ_{\lambda^\prime}(y,x;t^{-1},q^{-1})\\
= b_{\lambda^\prime}(t^{-1},q^{-1})b_\lambda(q,t)\sum_{\mu,\nu}(-t)^{-|\nu|}\hat{c}_{\mu,\nu}^\lambda(q,t)P_\nu(x;q,t)\frac{P_{\mu^\prime}(y;t,q)}{b_\mu(q,t)}.
\end{multline*}
Utilising \eqref{blam}, it is readily seen that $b_{\lambda^\prime}(t^{-1},q^{-1})=(q^{-1}t)^{|\lambda|}/b_\lambda(q,t)$ and $b_\mu(q,t)=1/b_{\mu^\prime}(t,q)$, which clearly entails \eqref{SPSym2}.
\end{proof}

We recall that, by analysing \eqref{SPExp2}, Sergeev and Veselov showed that $SP_\lambda(x,y)$ vanishes identically unless $\lambda$ is contained in the set of partitions $H_{n,m}$, consisting of all partitions $\lambda$ such that $\lambda_{n+1}\leq m$, or equivalently, the diagram of $\lambda$ is contained in the so-called fat $(n,m)$-hook; and the non-zero super Macdonald polynomials $SP_\lambda(x,y)$ ($\lambda\in H_{n,m}$) form a basis in $\Lambda_{n,m;q,t}$, the algebra of (complex) polynomials $p(x,y)$ in $n+m$ variables $x=(x_1,\ldots,x_n)$ and $y=(y_1,\ldots,y_m)$ that are symmetric in each set of variables separately,
$$
p(\sigma x,\tau y) = p(x,y)\ \ \ ((\sigma,\tau)\in S_n\times S_m),
$$
and satisfy the additional symmetry conditions
\begin{equation}
\label{qinv}
\big(T_{q,x_i}-T_{t,y_j}^{-1}\big)p(x,y) = 0\ \ \text{along}\ \ x_i = y_j\ \ \ (i=1,\ldots,n,\ j=1,\ldots,m),
\end{equation}
cf.~Thm.~5.6 in \cite{SV09}.

To establish the desired eigenvalue equations, we focus first on the deformed NS operators.  Specifically, taking $M=0$ in \eqref{cHkerId}, we obtain
\begin{equation}
\label{cHkerIdspec}
\cH_{n,m}(x,y;u)\Phi_{m,m;N,0}(x,y;z) = \frac{(t^{1-N}u;q)_\infty}{(tu;q)_\infty}H_N(z;t^{1-N}u)\Phi_{n,m;N,0}(x,y;z),
\end{equation}
and from Eq.~(5.17) in \cite{NS20}, we infer
\begin{equation*}
H_N(z;t^{1-N}u)Q_\lambda(z) = Q_\lambda(z) \prod_{i=1}^N \frac{(q^{\lambda_i}t^{2-i}u;q)_\infty}{(q^{\lambda_i}t^{1-i}u;q)_\infty}\ \ \ (l(\lambda)\leq N).
\end{equation*}
For $\lambda\in H_{n,m}$, we introduce the product
\begin{equation}
\label{cGlam}
\mathcal{G}_\lambda(u) = \prod_{i\geq 1} \frac{(q^{\lambda_i}t^{2-i}u;q)_\infty}{(t^{2-i}u;q)_\infty}\frac{(t^{1-i}u;q)_\infty}{(q^{\lambda_i}t^{1-i}u;q)_\infty},
\end{equation}
(which may be truncated at $i=l(\lambda)$). Choosing $N\geq l(\lambda)$ and substituting the expansion \eqref{SPDef} in the kernel identity \eqref{cHkerIdspec},  we deduce
\begin{equation}
\label{cHnmEigEq}
\cH_{n,m}(x,y;u)SP_\lambda(x,y) = \mathcal{G}_\lambda(u)SP_\lambda(x,y).
\end{equation}

Rather than expressing the eigenvalue $\mathcal{G}_\lambda(u)$ in terms of the quantities $q^{\lambda_i}$ ($i\geq 1$), it is in many ways more natural to map $\lambda\in H_{n,m}$ (injectively) to the pair of partitions
\begin{equation}
\label{munu}
\mu = (\lambda_1,\ldots,\lambda_n),\ \ \ \nu = (\lambda_{n+1},\lambda_{n+2},\ldots)^\prime,
\end{equation}
and rewrite \eqref{cGlam} in terms of $q^{\mu_i}$ ($i = 1,\ldots,n$) and $t^{-\nu_j-n}$ ($j = 1,\ldots,m$). More precisely, we have the equalities
\begin{align*}
\prod_{i\geq n+1} \frac{(q^{\lambda_i}t^{2-i}u;q)_\infty}{(t^{2-i}u;q)_\infty}\frac{(t^{1-i}u;q)_\infty}{(q^{\lambda_i}t^{1-i}u;q)_\infty} &= \prod_{i\geq n+1}\prod_{j=1}^{\lambda_i} \frac{1-q^{j-1}t^{1-i}u}{1-q^{j-1}t^{2-i}u}\\
&= \prod_{j=1}^m\prod_{i=1}^{\nu_j} \frac{1-q^{j-1}t^{1-i-n}u}{1-q^{j-1}t^{2-i-n}u}\\
&= \prod_{j=1}^m \frac{1-q^{j-1}t^{1-\nu_j-n}u}{1-q^{j-1}t^{1-n}u},
\end{align*}
which entail
\begin{equation}
\label{cGExpr}
\mathcal{G}_\lambda(u) = \prod_{i=1}^n \frac{(q^{\mu_i}t^{2-i}u;q)_\infty}{(t^{2-i}u;q)_\infty}\frac{(t^{1-i}u;q)_\infty}{(q^{\mu_i}t^{1-i}u;q)_\infty}\cdot \prod_{j=1}^m \frac{1-t^{1-\nu_j-n}q^{j-1}u}{1-t^{1-n}q^{j-1}u}.
\end{equation}
We note that the right-hand side is manifestly invariant under permutations of the quantities $q^{\mu_i}t^{1-i}$ ($i=1,\ldots,n$) as well as the quantities $t^{-\nu_j-n}q^{j-1}$ ($j = 1,\ldots,m$).

Substituting $q^{\mu_i}\to x_i$ ($i=1,\ldots,n$) and $t^{-\nu_j-n}\to y_j$ ($j=1,\ldots,m$) in \eqref{cGExpr}, we obtain the product function
\begin{equation}
\label{Gnat}
G^\natural_{n,m}(x,y;u) = \prod_{i=1}^n \frac{(x_it^{2-i}u;q)_\infty}{(t^{2-i}u;q)_\infty}\frac{(t^{1-i}u;q)_\infty}{(x_it^{1-i}u;q)_\infty}\cdot \prod_{j=1}^m \frac{1-ty_jq^{j-1}u}{1-t^{1-n}q^{j-1}u},
\end{equation}
so that
\begin{equation}
\label{cGlamExpr2}
\mathcal{G}_\lambda(u)=G^\natural_{n,m}(q^\mu,t^{-\nu-(n^m)};u).
\end{equation}
If we now define polynomials $g_r^\natural(x,y)=g_r^\natural(x,y;q,t)$ ($r\in\N$) as the coefficients of $u^r$ in the power series expansion of \eqref{Gnat}, i.e.
\begin{equation}
\label{gknat}
G^\natural_{n,m}(x,y;u) = \sum_{r\geq 0}g_r^\natural(x,y)u^r,
\end{equation}
then it becomes clear from \eqref{cHnmk}, \eqref{cHnmEigEq} and \eqref{cGlamExpr2} that the eigenvalues of $\cH_{n,m}^{r}(x,y)$ are given by $g_r^\natural(q^\mu,t^{-\nu-(n^m)})$.

The eigenvalues of the deformed MR operators in \eqref{cDnmk} can, in a similar manner, be expressed in terms of polynomials $e_r^\natural(x,y)$ ($r\in\N$) defined by the generating function expansion
\begin{equation}
\label{Enat}
\begin{split}
E^\natural_{n,m}(x,y;u) &= \prod_{i=1}^n \frac{1-x_it^{1-i}u}{1-t^{1-i}u} \cdot \prod_{j=1}^m \frac{(t^{-n}q^{j}u;t^{-1})_\infty}{(y_jq^{j}u;t^{-1})_\infty}\frac{(y_jq^{j-1}u;t^{-1})_\infty}{(t^{-n}q^{j-1}u;t^{-1})_\infty}\\
&= \sum_{r\geq 0}e_r^\natural(x,y;q,t)(-u)^r.
\end{split}
\end{equation}
Indeed, we have the following theorem, which details the explicit simultaneous diagonalisation of the deformed NS and MR operators.

\begin{theorem}
\label{Thm:EigEqs}
Assuming that $q,t$ are non-special, we have the eigenvalue equations
\begin{equation}
\label{cHnmkEigEq}
\cH_{n,m}^{r}(x,y)SP_\lambda(x,y) = g_r^\natural(q^\mu,t^{-\nu-(n^m)})SP_\lambda(x,y)
\end{equation}
and
\begin{equation}
\label{cDnmkEigEq}
\cD_{n,m}^{r}(x,y)SP_\lambda(x,y) = e_r^\natural(q^\mu,t^{-\nu-(n^m)})SP_\lambda(x,y)
\end{equation}
for all $r\in\N$, $\lambda\in H_{n,m}$ and with $\mu,\nu$ given by \eqref{munu}.
\end{theorem}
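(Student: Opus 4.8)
The plan is to read off the NS eigenvalue equations \eqref{cHnmkEigEq} from the generating-series statement \eqref{cHnmEigEq} that has already been established, and then to deduce the MR eigenvalue equations \eqref{cDnmkEigEq} from them by combining the symmetry \eqref{SPSym2} of the super-Macdonald polynomials with the relation between the two families of operators. For the deformed NS operators this is immediate: inserting the identity $\mathcal{G}_\lambda(u)=G^\natural_{n,m}(q^\mu,t^{-\nu-(n^m)};u)$ from \eqref{cGlamExpr2} into \eqref{cHnmEigEq} and comparing the coefficients of $u^r$ on both sides, using \eqref{cHnmk} and \eqref{gknat}, yields \eqref{cHnmkEigEq} for all $r\in\N$ and $\lambda\in H_{n,m}$ (and trivially for $\lambda\notin H_{n,m}$, where $SP_\lambda\equiv0$).

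For the deformed MR operators, I would start from the relation $\cD_{n,m}(x,y;u;q,t)=\cH_{m,n}(y,x;qu;t^{-1},q^{-1})$, which follows from \eqref{Dnm} and the definitions \eqref{cHnm}--\eqref{cDnm} and which is exactly the identity used to open the proof of Theorem \ref{Thm:kerIds}. Using \eqref{SPSym2} together with $SQ_\kappa=b_\kappa SP_\kappa$ one has $SP_\lambda(x,y;q,t)=(-q)^{|\lambda|}b_{\lambda^\prime}(t^{-1},q^{-1})\,SP_{\lambda^\prime}(y,x;t^{-1},q^{-1})$, and $\lambda\in H_{n,m}$ precisely when $\lambda^\prime\in H_{m,n}$. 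Feeding this into the already-proven equation \eqref{cHnmEigEq}, applied with $(n,m,q,t)$ replaced by $(m,n,t^{-1},q^{-1})$, with partition $\lambda^\prime$ and spectral variable $qu$, and cancelling the (nonzero, since $q,t$ are non-special) scalar factor $(-q)^{|\lambda|}b_{\lambda^\prime}(t^{-1},q^{-1})$, one obtains $\cD_{n,m}(x,y;u;q,t)\,SP_\lambda(x,y;q,t)=\mathcal{G}_{\lambda^\prime}(qu;t^{-1},q^{-1})\,SP_\lambda(x,y;q,t)$, where $\mathcal{G}_{\lambda^\prime}(\cdot\,;t^{-1},q^{-1})$ denotes \eqref{cGlam} with $(q,t)$ replaced by $(t^{-1},q^{-1})$.

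The remaining --- and main --- step is to identify this eigenvalue, namely to prove that $\mathcal{G}_{\lambda^\prime}(qu;t^{-1},q^{-1})=E^\natural_{n,m}(q^\mu,t^{-\nu-(n^m)};u;q,t)$ as power series in $u$; extracting the coefficient of $(-u)^r$ via \eqref{cDnmk} and \eqref{Enat} then gives \eqref{cDnmkEigEq}. This is a routine computation in the spirit of the one preceding \eqref{cGExpr}: substituting $(t^{-1},q^{-1})$ into \eqref{cGlam}, factoring the $t^{-1}$-Pochhammer ratios, reindexing the resulting product over the boxes of $\lambda$ (using $k\le\lambda^\prime_i\iff i\le\lambda_k$) and telescoping, one first reduces to $\mathcal{G}_{\lambda^\prime}(qu;t^{-1},q^{-1})=\prod_{k\ge1}\frac{1-q^{\lambda_k}t^{1-k}u}{1-t^{1-k}u}$; splitting this product at $k=n$ reproduces, under $q^{\mu_k}\to x_k$, the rational factor of \eqref{Enat}, while grouping the tail $k\ge n+1$ by column index (so that $\{k\ge n+1:\lambda_k\ge j\}=\{n+1,\dots,n+\nu_j\}$ for $j=1,\dots,m$) and rewriting each finite product as a ratio of $t^{-1}$-Pochhammer symbols reproduces, under $t^{-\nu_j-n}\to y_j$, the remaining factor.

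I expect no conceptual obstacle here; the only thing requiring care is bookkeeping --- keeping the parameter inversions $q\leftrightarrow t^{-1}$, the variable swap $x\leftrightarrow y$, the transpose $\lambda\leftrightarrow\lambda^\prime$ and the induced re-pairing of $\mu,\nu$ via \eqref{munu} mutually consistent. As an alternative that avoids \eqref{SPSym2}, one can mirror the NS derivation verbatim: take $M=0$ in the MR kernel identity \eqref{cDkerId}, use $\cD_{N,0}(z;u)=\frac{(t^{-N}u;t^{-1})_\infty}{(u;t^{-1})_\infty}D_N(z;t^{1-N}u)$ together with the classical spectral relation $D_N(z;v)Q_\lambda(z)=\prod_{i=1}^N\big(1-vq^{\lambda_i}t^{N-i}\big)Q_\lambda(z)$, and substitute the expansion \eqref{SPDef}; this leads to the same telescoping identity and hence to \eqref{cDnmkEigEq}.
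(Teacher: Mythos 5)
Your proposal is correct and essentially follows the paper's own argument: the NS case is read off from \eqref{cHnmEigEq} together with \eqref{cGlamExpr2}, and the MR case is deduced from $\cD_{n,m}(x,y;u;q,t)=\cH_{m,n}(y,x;qu;t^{-1},q^{-1})$ combined with \eqref{SPSym2}, the remaining eigenvalue identification being precisely the paper's \eqref{GEId}/\eqref{EnatExpr}, which you verify by direct reindexing and telescoping instead of the paper's device of choosing $n,m$ with $(n^m)\subseteq\lambda$ and a generating-function identity. Your closing alternative (taking $M=0$ in \eqref{cDkerId} and using the classical spectrum of $D_N$ on $Q_\lambda$) is also sound and would make the MR case fully parallel to the NS one, though it is not the route the paper takes.
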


\begin{proof}
There remains only to establish the latter eigenvalue equation.

Letting
$$
\eta = (\lambda^\prime_1,\ldots,\lambda^\prime_m),\ \ \ \xi = (\lambda^\prime_{m+1},\lambda^\prime_{m+2},\ldots)^\prime,
$$
we use $\cD_{n,m}(x,y;u;q,t)=\cH_{m,n}(y,x;qu;t^{-1},q^{-1})$ and the symmetry property \eqref{SPSym2} of $SP_\lambda$ to infer from \eqref{cHnmk}, \eqref{gknat} and \eqref{cHnmkEigEq} that
$$
\cD_{n,m}(x,y;u;q,t)SP_\lambda(x,y;q,t) = G^\natural_{m,n}(t^{-\eta},q^{\xi+(m^n)};qu;t^{-1},q^{-1})SP_\lambda(x,y;q,t).
$$
Hence \eqref{cDnmkEigEq} will follow once we prove that
\begin{equation}
\label{GEId}
G^\natural_{m,n}(t^{-\eta},q^{\xi+(m^n)};qu;t^{-1},q^{-1}) = E^\natural_{n,m}(q^\mu,t^{-\nu-(n^m)};u;q,t).
\end{equation}
By a direct computation, similar to that leading from \eqref{cGlam} to \eqref{cGExpr}, it is readily verified that
\begin{equation}
\label{EnatExpr}
E^\natural_{n,m}(q^\mu,t^{-\nu-(n^m)};u) = \prod_{i\geq 1} \frac{1-q^{\lambda_i}t^{1-i}u}{1-t^{1-i}u},
\end{equation}
as long as $\lambda\in H_{n,m}$. Keeping in mind \eqref{cGlam} and \eqref{cGlamExpr2}, it becomes clear that both the left- and right-hand side of \eqref{GEId} are independent of $n,m$, so that we may choose them such that $(n^m)\subseteq\lambda$. As a consequence, we get
$$
\eta = \nu+(n^m),\ \ \ \xi = \mu-(m^n),
$$
which entails
$$
G^\natural_{m,n}(t^{-\eta},q^{\xi+(m^n)};qu;t^{-1},q^{-1}) = G^\natural_{m,n}(t^{-\nu+(n^m)},q^{\mu};qu;t^{-1},q^{-1}).
$$
Observing
\begin{multline*}
\prod_{i=1}^m \frac{(q^iu;t^{-1})_\infty}{(q^{i-1}u;t^{-1})_\infty}\cdot \prod_{j=1}^n \frac{1}{1-q^mt^{1-j}u}\\
= \frac{(q^mt^{-n}u;t^{-1})_\infty}{(u;t^{-1})_\infty} = \prod_{i=1}^n \frac{1}{1-t^{1-i}u}\cdot \prod_{j=1}^m \frac{(t^{-n}q^ju;t^{-1})_\infty}{(t^{-n}q^{j-1}u;t^{-1})_\infty},
\end{multline*}
we deduce
$$
G^\natural_{m,n}(y,x;qu;t^{-1},q^{-1}) = E^\natural_{n,m}(x,y;u;q,t)
$$
and \eqref{GEId} clearly follows.
\end{proof}

Introducing the difference operators
$$
\widehat{\cH}_{n,m}^{r}(x,y;q,t) := \cH_{n,m}^{r}(x,qty;q^{-1},t^{-1})
$$
and
$$
\widehat{\cD}_{n,m}^{r}(x,y;q,t) := \cD_{n,m}^{r}(x,qty;q^{-1},t^{-1})
$$
for $r\in\N$, the following eigenvalue equations are a direct consequence of Theorem \ref{Thm:EigEqs} and symmetry property \eqref{SPSym1} of $SP_\lambda(x,y)$.

\begin{corollary}
\label{Cor:EigEqs}
For all $q,t$ that are non-special, $r\in\N$ and $\lambda\in H_{n,m}$, we have
$$
\widehat{\cH}_{n,m}^{r}(x,y)SP_\lambda(x,y) = g_r^\natural(q^{-\mu},t^{\nu+(n^m)})SP_\lambda(x,y)
$$
and
$$
\widehat{\cD}_{n,m}^{r}(x,y)SP_\lambda(x,y) = e_r^\natural(q^{-\mu},t^{\nu+(n^m)})SP_\lambda(x,y).
$$
\end{corollary}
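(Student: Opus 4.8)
The plan is to derive Corollary~\ref{Cor:EigEqs} as a purely formal consequence of Theorem~\ref{Thm:EigEqs}, by applying that theorem with the parameters $(q,t)$ replaced by their reciprocals $(q^{-1},t^{-1})$ and then pulling the result back to $(q,t)$ along the symmetry relation \eqref{SPSym1}. This works because the combinatorial data attached to $\lambda$ --- the partitions $\mu,\nu$ of \eqref{munu} and the membership $\lambda\in H_{n,m}$ --- are independent of $q,t$, and the non-speciality condition \eqref{nonSpec} is manifestly invariant under $(q,t)\mapsto(q^{-1},t^{-1})$; thus Theorem~\ref{Thm:EigEqs} holds verbatim with inverted parameters.

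First I would write down the inverted-parameter eigenvalue equations. Theorem~\ref{Thm:EigEqs}, read with $(q^{-1},t^{-1})$ in place of $(q,t)$, gives for all $r\in\N$ and $\lambda\in H_{n,m}$
\[
\cH_{n,m}^{r}(x,y;q^{-1},t^{-1})\,SP_\lambda(x,y;q^{-1},t^{-1})=g_r^\natural(q^{-\mu},t^{\nu+(n^m)};q^{-1},t^{-1})\,SP_\lambda(x,y;q^{-1},t^{-1}),
\]
together with the analogous identity for $\cD_{n,m}^{r}$ and $e_r^\natural(q^{-\mu},t^{\nu+(n^m)};q^{-1},t^{-1})$. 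Next I would substitute $y\mapsto qty$ throughout and apply \eqref{SPSym1} in the form $SP_\lambda(x,qty;q^{-1},t^{-1})=SP_\lambda(x,q^{-1}t^{-1}\cdot qty;q,t)=SP_\lambda(x,y;q,t)$. Since $SP_\lambda(x,y;q,t)$ is a nonzero polynomial for $\lambda\in H_{n,m}$, it cancels from both sides; and by the definitions $\widehat{\cH}_{n,m}^{r}(x,y;q,t)=\cH_{n,m}^{r}(x,qty;q^{-1},t^{-1})$ and $\widehat{\cD}_{n,m}^{r}(x,y;q,t)=\cD_{n,m}^{r}(x,qty;q^{-1},t^{-1})$, the operators on the left become exactly $\widehat{\cH}_{n,m}^{r}$ and $\widehat{\cD}_{n,m}^{r}$ acting on $SP_\lambda(x,y;q,t)$. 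This is precisely the content of Corollary~\ref{Cor:EigEqs}, the suppressed parameters of $g_r^\natural$ and $e_r^\natural$ in the statement being understood as $(q^{-1},t^{-1})$.

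I do not expect a genuine obstacle: the argument is entirely formal once Theorem~\ref{Thm:EigEqs} is in hand. The only points that require care are the bookkeeping of parameters and variables --- in particular verifying that the scaling $y\mapsto qty$ built into $\widehat{\cH}_{n,m}^{r}$ and $\widehat{\cD}_{n,m}^{r}$ is exactly the one needed for \eqref{SPSym1} to undo the reciprocation on the eigenfunctions --- and recording that $\widehat{\cH}_{n,m}^{r}$ and $\widehat{\cD}_{n,m}^{r}$ are well-defined difference operators since $q,t$ non-special forces $q^{-1},t^{-1}$ non-special. It is also worth noting explicitly that any auxiliary restrictions of the type $|q|<1$, $|t|>1$ used at intermediate stages are harmless, as all the identities involved are rational in $q,t$ and extend by continuation.
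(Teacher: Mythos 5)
Your proposal is correct and is essentially the paper's own argument: the paper derives Corollary~\ref{Cor:EigEqs} exactly as a direct consequence of Theorem~\ref{Thm:EigEqs} applied with $(q,t)$ replaced by $(q^{-1},t^{-1})$, combined with the symmetry property \eqref{SPSym1} and the definitions $\widehat{\cH}_{n,m}^{r}(x,y;q,t)=\cH_{n,m}^{r}(x,qty;q^{-1},t^{-1})$, $\widehat{\cD}_{n,m}^{r}(x,y;q,t)=\cD_{n,m}^{r}(x,qty;q^{-1},t^{-1})$. Your bookkeeping remarks (non-speciality is invariant under inversion of $q,t$, the rescaling $y\mapsto qty$ commutes with the multiplicative shift operators, and the eigenvalues produced carry the inverted parameters) are exactly the points the paper leaves implicit.
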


Taking $r,s\in\N$, let us consider the difference operator
$$
L_{n,m}(x,y) := T_{q,x}^{(r^n)}T_{t,y}^{-(1^m)}\left[\widehat{\cH}_{n,m}^{r},\cH_{n,m}^{s}\right],
$$
to which Lemma \ref{Lemma:diffop} clearly applies. Combining the kernel function expansion \eqref{SPDef} with Theorem \ref{Thm:EigEqs} and Corollary \ref{Cor:EigEqs}, we see that $L_{n,m}(x,y)\Phi_{n,m;N,0}(x,y;z)=0$ for all $N\in\N^*$. By invoking Lemma \ref{Lemma:diffop} and using the invertibility of $T_{q,x}^{(r^n)}T_{t,y}^{-(1^m)}$, we thus conclude that $\widehat{\cH}_{n,m}^{r}$ and $\cH_{n,m}^{s}$ commute as difference operators. Substituting $\widehat{\cH}_{n,m}^{r}\to \widehat{\cD}_{n,m}^{r}$ and/or $\cH_{n,m}^{s}\to \cD_{n,m}^{s}$ in the above argument, we obtain the following corollary.

\begin{corollary}
The difference operators $\widehat{\cH}_{n,m}^{r}$ ($r\in\N$) and $\widehat{\cD}_{n,m}^{r}$ ($r\in\N$) commute with each other as well as the difference operators $\cH_{n,m}^{s}$ ($s\in\N$) and $\cD_{n,m}^{s}$ ($s\in\N$).
\end{corollary}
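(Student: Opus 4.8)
The plan is to run the eigenfunction argument once, uniformly, for all the relevant pairs of operators. The key observation is that \emph{all four} families $\cH_{n,m}^{r}$, $\cD_{n,m}^{r}$, $\widehat{\cH}_{n,m}^{r}$ and $\widehat{\cD}_{n,m}^{r}$ ($r\in\N$) are diagonalised by the \emph{same} basis $\{SP_\lambda(x,y):\lambda\in H_{n,m}\}$ of $\Lambda_{n,m;q,t}$: for the unhatted operators this is Theorem~\ref{Thm:EigEqs}, and for the hatted ones it is Corollary~\ref{Cor:EigEqs}. Hence, writing $A$ and $B$ for any two operators taken from these families, the eigenvalue equations give $[A,B]\,SP_\lambda=0$ for every $\lambda\in H_{n,m}$, and, since the $SP_\lambda$ span $\Lambda_{n,m;q,t}$, the commutator $[A,B]$ annihilates all of $\Lambda_{n,m;q,t}$.

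To turn this into vanishing as a difference operator I would argue exactly as in the commutativity proof above. Expanding the kernel function by \eqref{SPDef} (and using $SP_\lambda\equiv 0$ for $\lambda\notin H_{n,m}$),
\[
\Phi_{n,m;N,0}(x,y;z)=\sum_{\substack{\lambda\in H_{n,m}\\ l(\lambda)\leq N}}t^{-|\lambda|}\,SP_\lambda(x,y)\,Q_\lambda(z),
\]
and applying $[A,B]$ termwise in $(x,y)$---legitimate because $[A,B]$ is a finite-order difference operator with meromorphic coefficients and the expansion converges locally uniformly---one gets $[A,B]\,\Phi_{n,m;N,0}(x,y;z)=0$ for all $N\in\N^{*}$. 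Now $[A,B]$ need not itself have the ``forward $q$-shifts in $x$, backward $t$-shifts in $y$'' form required by Lemma~\ref{Lemma:diffop}: from the generating series \eqref{Hnm}--\eqref{Dnm} one reads off that $\cH_{n,m}^{r}$, $\cD_{n,m}^{r}$ carry only shifts $T_{q,x}^{\mu}T_{t,y}^{-I}$, respectively $T_{q,x}^{I}T_{t,y}^{-\mu}$, with $\mu$ a nonnegative multi-index of weight $\leq r$ and $I$ a $\{0,1\}$-valued multi-index, whereas $\widehat{\cH}_{n,m}^{r}$, $\widehat{\cD}_{n,m}^{r}$ carry the mirrored shifts $T_{q,x}^{-\mu}T_{t,y}^{I}$, respectively $T_{q,x}^{-I}T_{t,y}^{\mu}$. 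Choosing $d\in\N$ large enough in terms of the orders $r,s$ of $A,B$ (for instance $d=r+s+2$) so that every $x$-shift exponent in $[A,B]$ is $\geq -d$ and every $y$-shift exponent is $\leq d$, the operator $L_{n,m}(x,y):=T_{q,x}^{(d^{n})}T_{t,y}^{-(d^{m})}[A,B]$ then does have the form appearing in Lemma~\ref{Lemma:diffop}. Since the monomial shift commutes with multiplication by functions of $z$, $L_{n,m}(x,y)\,\Phi_{n,m;N,0}(x,y;z)=0$ for all $N\in\N^{*}$, so Lemma~\ref{Lemma:diffop} yields $L_{n,m}(x,y)\equiv 0$, and invertibility of $T_{q,x}^{(d^{n})}T_{t,y}^{-(d^{m})}$ gives $[A,B]\equiv 0$. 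Letting $(A,B)$ range over all pairs with at least one hatted factor then gives all the asserted commutators.

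I expect the only genuinely fiddly step to be the shift bookkeeping just described---checking that one and the same monomial shift converts each relevant commutator into an operator of precisely the form $\sum_{\mu\in\N^{n},\,\nu\in\N^{m},\,|\mu|+|\nu|\leq d'}a_{\mu,\nu}(x,y)\,T_{q,x}^{\mu}T_{t,y}^{-\nu}$---since this is where the otherwise formal argument must be matched against the explicit structure of the deformed operators. For the purely hatted pairs one can also bypass this: $[\widehat{\cH}_{n,m}^{r},\widehat{\cH}_{n,m}^{s}]=[\widehat{\cD}_{n,m}^{r},\widehat{\cD}_{n,m}^{s}]=[\widehat{\cH}_{n,m}^{r},\widehat{\cD}_{n,m}^{s}]=0$ follows from the already established commutativity of $\cH_{n,m}^{r},\cD_{n,m}^{r}$ (valid for all non-special $q,t$) simply by inverting $(q,t)\mapsto(q^{-1},t^{-1})$ and conjugating by the invertible substitution $y\mapsto(qt)y$, operations which preserve commutators.
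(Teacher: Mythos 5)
Your proposal is correct and follows essentially the same route as the paper: it uses the simultaneous diagonalisation from Theorem \ref{Thm:EigEqs} and Corollary \ref{Cor:EigEqs} together with the expansion \eqref{SPDef} to conclude that each commutator annihilates $\Phi_{n,m;N,0}(x,y;z)$ for all $N\in\N^*$, and then pre-composes with an invertible monomial shift so that Lemma \ref{Lemma:diffop} applies and invertibility of the shift gives vanishing of the commutator. The only cosmetic differences are your uniform choice of shift $T_{q,x}^{(d^n)}T_{t,y}^{-(d^m)}$ with $d$ large in place of the paper's tailored $T_{q,x}^{(r^n)}T_{t,y}^{-(1^m)}$, and your explicit disposal of the purely hatted pairs via the substitution $(q,t,y)\mapsto(q^{-1},t^{-1},qty)$, which the paper leaves implicit.
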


\subsection{Harish-Chandra isomorphism}
\label{SubSec:HC}
Focusing first on the deformed NS operators, we consider the commutative (complex) algebra of difference operators
\begin{equation}
\label{cRnm}
\cR_{n,m;q,t} := \C\left[\cH_{n,m}^{1},\cH_{n,m}^{2},\ldots\right],
\end{equation}
cf.~ \eqref{cHnmk}. As we demonstrate below, Theorem \ref{Thm:EigEqs} enables us to establish an explicit Harish-Chandra type isomorphism $\Lambda^{\natural}_{n,m;q,t}\to \cR_{n,m;q,t}$, where $\Lambda^{\natural}_{n,m;q,t}$, introduced in \cite{SV09} as a `shifted' version of $\Lambda_{n,m;q,t}$, denotes the algebra of (complex) polynomials $p(x,y)$ in $n+m$ variables $x=(x_1,\ldots,x_n)$ and $y=(y_1,\ldots,y_m)$ that are separately symmetric in the $t$-shifted variables $x_1,x_2t^{-1},\ldots,x_nt^{1-n}$ and the $q$-shifted variables $y_1,y_2q,\ldots,y_mq^{m-1}$, and, in addition, satisfy the symmetry conditions
\begin{equation}
\label{symCondShift}
T_{q,x_i}(p) = T_{t,y_j}^{-1}(p)\ \ \text{along}\ \ x_it^{1-i}=y_jq^{j-1}\ \ \ (i=1,\ldots,n,\ j=1,\ldots,m).
\end{equation}

In particular, the algebra $\Lambda^{\natural}_{n,m;q,t}$ contains the polynomials $g_r^\natural(x,y)$ ($r\in\N$), as defined by \eqref{gknat}. Indeed, their generating function $G^\natural_{n,m}(x,y;u)$ is manifestly symmetric in the shifted variables $x_it^{1-i}$ and $y_jq^{j-1}$ and, by a direct computation, it is readily seen that $G^\natural_{n,m}(x,y;u)$ satisfies \eqref{symCondShift} as well. Furthermore, using corresponding elements in the so-called algebra of shifted symmetric functions, we can prove the following result.

\begin{lemma}
\label{Lemma:gknat}
As long as the parameters $q,t$ are non-special, the algebra $\Lambda^{\natural}_{n,m;q,t}$ is generated by the polynomials $g_r^\natural(x,y)$ ($r\in\N^*$).
\end{lemma}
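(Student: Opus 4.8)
Only the inclusion $\Lambda^\natural_{n,m;q,t}\subseteq\C[g_r^\natural:r\in\N^*]$ requires proof, the reverse one having been recorded above; the plan is to obtain it from the algebra $\Lambda^*_{q,t}$ of shifted symmetric functions. Regarded as a function of $\lambda$, the series $\mathcal G_\lambda(u)$ of \eqref{cGlam} is symmetric in the $q$-contents $q^{\lambda_i}t^{1-i}$ (all but finitely many of which equal their $\lambda=\emptyset$ values), and a short logarithmic expansion — inserting $\log(a;q)_\infty=-\sum_{k\ge1}a^k/(k(1-q^k))$ — yields $\mathcal G_\lambda(u)=\exp\bigl(\sum_{k\ge1}\tfrac{1-t^k}{k(1-q^k)}\,u^k p_k^*\bigr)$ with $p_k^*\in\Lambda^*_{q,t}$ the $k$-th shifted power sum. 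Hence the coefficients $g_r^*$ of $\mathcal G_\lambda(u)$ lie in $\Lambda^*_{q,t}$ and, since $q,t$ are non-special, Newton's identities make $g_1^*,g_2^*,\dots$ algebraically independent generators of $\Lambda^*_{q,t}$. There is a natural $\C$-algebra homomorphism $\psi_{n,m}\colon\Lambda^*_{q,t}\to\Lambda^\natural_{n,m;q,t}$ — the shifted analogue of the restriction homomorphism of \cite{SV09}, specialising the variables of a shifted symmetric function according to the $q,t$-contents of the partitions $\lambda\in H_{n,m}$ (cf.\ \eqref{munu}) — and by \eqref{cGExpr}, \eqref{cGlamExpr2}, \eqref{Gnat} and \eqref{gknat} it satisfies $\psi_{n,m}(g_r^*)=g_r^\natural$. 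Thus, if $\psi_{n,m}$ is \emph{surjective}, then $\Lambda^\natural_{n,m;q,t}=\psi_{n,m}(\Lambda^*_{q,t})=\C[g_1^\natural,g_2^\natural,\dots]$ and the lemma follows.

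It is useful to keep in mind an equivalent, more concrete reformulation obtained by passing to the unshifted algebra. Taking logarithms in \eqref{Gnat} as above shows that $\C[g_r^\natural:r\in\N^*]=\C[\pi_k^\natural:k\in\N^*]$, where $\pi_k^\natural(x,y)=\tfrac{t^k-1}{1-q^k}\sum_{i=1}^n(x_it^{1-i})^k+t^k\sum_{j=1}^m(y_jq^{j-1})^k$ (absorb the $(x,y)$-independent exponential factor and apply Newton's identities; non-specialness keeps all scalars nonzero). Moreover the grading-preserving monomial rescaling $x_i\mapsto x_it^{i-1}$, $y_j\mapsto y_jq^{1-j}$ is a $\C$-algebra isomorphism $\Lambda^\natural_{n,m;q,t}\xrightarrow{\ \sim\ }\Lambda_{n,m;q,t}$ — it carries symmetry in the shifted variables $x_it^{1-i},\,y_jq^{j-1}$ to ordinary symmetry and condition \eqref{symCondShift} to condition \eqref{qinv} — under which $\pi_k^\natural$ becomes a nonzero multiple of the deformed power sum $p_k(x)+\tfrac{t^k(1-q^k)}{1-t^k}\,p_k(y)$. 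Hence the lemma is equivalent to the statement that $\Lambda_{n,m;q,t}$ is generated by its deformed power sums, equivalently that the unshifted restriction homomorphism $\Lambda\to\Lambda_{n,m;q,t}$ is onto (cf.\ \cite{SV09}).

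The crux, and the step I expect to be the main obstacle, is therefore this surjectivity. The cleanest way to establish it is to exhibit a basis of $\Lambda^\natural_{n,m;q,t}$ that lies in the image of $\psi_{n,m}$: namely the shifted super-Macdonald polynomials $SP_\lambda^*$ ($\lambda\in H_{n,m}$) of \cite{SV09}, which form such a basis and are precisely the $\psi_{n,m}$-images of the $\Lambda^*_{q,t}$-valued interpolation Macdonald polynomials $P_\lambda^*$; this identification rests on the extra-vanishing characterisation of $P_\lambda^*$ together with the shifted analogue of the expansion \eqref{SPExp2}. Alternatively, one may argue by induction on degree, using that surjectivity is classical in the boundary cases $m=0$ (where $\Lambda_{n,0;q,t}=\Lambda_n$ up to rescaling) and $n=0$, and that the cancellation condition \eqref{qinv} lets one subtract from any $p\in\Lambda_{n,m;q,t}$ a suitable polynomial in the deformed power sums so as to strictly decrease its degree — with non-specialness and the basis $\{SP_\lambda:\lambda\in H_{n,m}\}$ of $\Lambda_{n,m;q,t}$ ensuring the bookkeeping closes up. Once surjectivity is in hand the lemma is immediate.
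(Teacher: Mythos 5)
Your argument is essentially the paper's: both pass to the algebra $\Lambda_t$ of shifted symmetric functions, introduce the $g_r^*$ via the same generating function, use the exp--log identity $G^*(z;u)=\exp\big(\sum_{r\geq1}\tfrac{u^r}{r}\tfrac{1-t^r}{1-q^r}p_r^*(z;t)\big)$ to get $\varphi^\natural_{n,m;q,t}(g_r^*)=g_r^\natural$ and the generation of $\Lambda_t$ by the $g_r^*$ (the paper does the latter by a leading-term comparison with Macdonald's $g_r$, but your route through the shifted power sums works equally well), and then conclude from surjectivity of the restriction homomorphism $\Lambda_t\to\Lambda^\natural_{n,m;q,t}$. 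The step you single out as the crux is precisely Thm.~6.2 of \cite{SV09}, which the paper simply cites for non-special $q,t$ rather than reproves, so your sketched arguments for it are unnecessary (and in your side reformulation the coefficient of $p_k(y)$ should be $t^k(1-q^k)/(t^k-1)$, matching the deformed Newton sums of Subsection \ref{SubSec:Res}, not $t^k(1-q^k)/(1-t^k)$).
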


\begin{proof}
The proof of this lemma is relegated to Appendix \ref{App:gknat}.
\end{proof}

We note that, as $\lambda$ runs through all partitions in the fat hook $H_{n,m}$, the corresponding points $(q^\mu,t^{-\nu-(n^m)})$, with $\mu,\nu$ given by \eqref{munu}, form a Zariski-dense set in $\C^{n+m}$, i.e.~the only polynomial $p(x,y)$ in $n+m$ variables $(x,y)$ that vanishes at all these points is the zero polynomial. Combining this observation with Lemmas \ref{Lemma:diffop} and \ref{Lemma:gknat}, it is now straightforward to establish the Harish-Chandra isomorphism. 

\begin{theorem}
\label{Thm:HCiso}
For non-special $q,t$, the map
$$
g_r^\natural((x_1,\ldots,x_n),(y_1,\ldots,y_m))\mapsto \cH_{n,m}^{r}\ \ \ (r\in\N^*)
$$
extends to an isomorphism
\begin{equation}
\label{psi}
\psi: \Lambda_{n,m;q,t}^\natural\to \cR_{n,m;q,t}, f\mapsto \cH_{n,m}^f
\end{equation}
of algebras, which is characterised by the eigenfunction property
$$
\cH_{n,m}^f SP_\lambda = f(q^\mu,t^{-\nu-(n^m)})SP_\lambda\ \ \ (f\in\Lambda_{n,m;q,t}^\natural,\ \lambda\in H_{n,m}).
$$
\end{theorem}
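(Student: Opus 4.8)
The plan is to present both $\Lambda_{n,m;q,t}^\natural$ and $\cR_{n,m;q,t}$ as quotients of the polynomial algebra $\C[\xi_1,\xi_2,\ldots]$ in countably many indeterminates and to match their kernels. Let $\alpha\colon\C[\xi_1,\xi_2,\ldots]\to\Lambda_{n,m;q,t}^\natural$ be the algebra homomorphism determined by $\xi_r\mapsto g_r^\natural$; it is surjective precisely by Lemma~\ref{Lemma:gknat}. Let $\beta\colon\C[\xi_1,\xi_2,\ldots]\to\cR_{n,m;q,t}$ be the algebra homomorphism determined by $\xi_r\mapsto\cH_{n,m}^{r}$; it is surjective by the very definition \eqref{cRnm}. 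The theorem will follow once I show $\ker\alpha=\ker\beta$: then $\psi:=\beta\circ\alpha^{-1}$ is a well-defined algebra isomorphism sending $g_r^\natural$ to $\cH_{n,m}^{r}$, i.e.\ the asserted extension.

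The inclusion $\ker\alpha\subseteq\ker\beta$ is the substantive part. Given $R\in\C[\xi_1,\xi_2,\ldots]$ with $\alpha(R)=R(g_1^\natural,g_2^\natural,\ldots)\equiv 0$ as a function of $(x,y)$, I set $L_{n,m}:=\beta(R)=R(\cH_{n,m}^{1},\cH_{n,m}^{2},\ldots)$. Each $\cH_{n,m}^{r}$ is a difference operator whose shifts $T_{q,x}^{\mu}T_{t,y}^{-\nu}$ satisfy $|\mu|+|\nu|\leq r$ (inspect \eqref{Hnm}--\eqref{BmuI} together with \eqref{cHnm}), so $L_{n,m}$ has bounded shift and is of the form covered by Lemma~\ref{Lemma:diffop}. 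Since the $\cH_{n,m}^{r}$ pairwise commute and are simultaneously diagonalised on $\{SP_\lambda\}_{\lambda\in H_{n,m}}$ by Theorem~\ref{Thm:EigEqs}, we get $L_{n,m}SP_\lambda=\bigl(R(g_1^\natural,g_2^\natural,\ldots)\bigr)\!\bigl(q^\mu,t^{-\nu-(n^m)}\bigr)SP_\lambda=0$ for every $\lambda\in H_{n,m}$. Expanding $\Phi_{n,m;N,0}(x,y;z)$ along the basis $\{SP_\lambda\}$ via \eqref{SPDef} and applying $L_{n,m}$ term by term gives $L_{n,m}\Phi_{n,m;N,0}(x,y;z)=0$ for all $N\in\N^*$, whence $L_{n,m}\equiv 0$ by Lemma~\ref{Lemma:diffop}; that is, $R\in\ker\beta$.

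For the reverse inclusion, take $R$ with $\beta(R)\equiv 0$ as a difference operator. Applying $\beta(R)$ to $SP_\lambda$ and invoking Theorem~\ref{Thm:EigEqs} once more yields $\bigl(R(g_1^\natural,g_2^\natural,\ldots)\bigr)\!\bigl(q^\mu,t^{-\nu-(n^m)}\bigr)SP_\lambda=0$, and since $SP_\lambda\not\equiv 0$ for $\lambda\in H_{n,m}$ the scalar vanishes for every such $\lambda$. Because the points $\bigl(q^\mu,t^{-\nu-(n^m)}\bigr)$, $\lambda\in H_{n,m}$, are Zariski-dense in $\C^{n+m}$, the polynomial $R(g_1^\natural,g_2^\natural,\ldots)$ is identically zero, i.e.\ $R\in\ker\alpha$. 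Thus $\ker\alpha=\ker\beta$ and $\psi$ exists as an algebra isomorphism.

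It remains to record the characterisation. Writing $f\in\Lambda_{n,m;q,t}^\natural$ as $f=R(g_1^\natural,g_2^\natural,\ldots)$, commutativity of the $\cH_{n,m}^{r}$ together with Theorem~\ref{Thm:EigEqs} gives $\cH_{n,m}^{f}SP_\lambda=R\bigl(g_1^\natural(q^\mu,t^{-\nu-(n^m)}),\ldots\bigr)SP_\lambda=f\bigl(q^\mu,t^{-\nu-(n^m)}\bigr)SP_\lambda$ for $\lambda\in H_{n,m}$; and if $\psi'$ is any algebra homomorphism $\Lambda_{n,m;q,t}^\natural\to\cR_{n,m;q,t}$ with this eigenfunction property, then $\psi(f)-\psi'(f)$ annihilates the spanning family $\{SP_\lambda\}$, hence every $\Phi_{n,m;N,0}$, and is therefore $0$ by Lemma~\ref{Lemma:diffop}, so $\psi$ is unique with this property. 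The only step requiring genuine care is the well-definedness inclusion $\ker\alpha\subseteq\ker\beta$, namely promoting a functional identity among the eigenvalue polynomials to an operator identity among the deformed NS operators; Theorem~\ref{Thm:EigEqs} and Lemma~\ref{Lemma:diffop} reduce this to the bookkeeping above, and the rest is formal.
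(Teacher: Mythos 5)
Your proposal is correct and follows essentially the same route as the paper's proof: a polynomial relation among the $g_r^\natural$ is promoted to an operator identity by applying it to the expansion \eqref{SPDef} via the eigenvalue equations of Theorem \ref{Thm:EigEqs} and then invoking Lemma \ref{Lemma:diffop}, while injectivity comes from the Zariski-density of the points $(q^\mu,t^{-\nu-(n^m)})$, $\lambda\in H_{n,m}$, and Lemma \ref{Lemma:gknat} supplies generation. Your presentation via matching kernels of the two surjections from $\C[\xi_1,\xi_2,\ldots]$, plus the explicit uniqueness check for the eigenfunction characterisation, is merely a repackaging (and slight fleshing-out) of the same argument.
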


\begin{proof}
Suppose we have a relation $F(g_{r_1}^\natural,\ldots,g_{r_K}^\natural)\equiv 0$ for some $F\in\C[z_1,\ldots,z_K]$, with $K\in\N^*$, and $r_j\in\N^*$ for $1\leq j\leq K$. Then, by \eqref{SPDef} and \eqref{cHnmkEigEq}, the corresponding difference operator
$$
L_{n,m} := F\big(\cH_{n,m}^{r_1},\ldots,\cH_{n,m}^{r_K}\big)
$$
satisfies $L_{n,m}\Phi_{n,m;N,0}(x,y;z)=0$ for all $N\in\N^*$. Thanks to Lemma \ref{Lemma:diffop}, it follows that $L_{n,m}\equiv 0$ as a difference operator. Since the polynomials $g^\natural_r$ ($r\in\N^*$) generate $\Lambda^\natural_{n,m;q,t}$ (cf.~Lemma \ref{Lemma:gknat}), we can thus conclude that $\psi$ is a well defined homomorphism of algebras and, as such, it is clearly surjective. To establish injectivity, it suffices to note that $\cH_{n,m}^f\equiv 0$ implies that $f(q^\mu,t^{-\nu-(n^m)})=0$ for all partitions $\mu,\nu$ of the form \eqref{munu} for some $\lambda\in H_{n,m}$, which, as previously observed, entails that $f$ vanishes identically.
\end{proof}

\begin{remark}
This yields an explicit realisation of the monomorphism $\psi$ in Thm.~6.4 of \cite{SV09}.
\end{remark}

By a direct computation, it is readily verified that the generating functions $G_{n,m}^\natural(x,y;u)$ \eqref{Gnat} and $E_{n,m}^\natural(x,y;u)$ \eqref{Enat} satisfy the functional equation
$$
E_{n,m}^\natural(x,y;u)G_{n,m}^\natural(x,y;u) = E_{n,m}^\natural(x,y;tu)G_{n,m}^\natural(x,y;qu).
$$
In view of \eqref{gknat}--\eqref{Enat}, this equation is equivalent to the Wronski type recurrence relations
$$
\sum_{r+s=k} (-1)^r (1-t^rq^s)e_r^\natural(x,y)g_s^\natural(x,y) = 0\ \ \ (k\in\N^*).
$$
Applying the Harish-Chandra isomorphism $\psi$ \eqref{psi}, we obtain the following corollary.

\begin{corollary}
\label{Cor:recRels}
For $q,t$ non-special, the deformed MR and NS operators satisfy the recurrence relations
\begin{equation}
\label{DHRels}
\sum_{r+s=k} (-1)^r (1-t^rq^s)\cD_{n,m}^{r}\cH_{n,m}^{s} = 0\ \ \ (k\in\N^*).
\end{equation}
\end{corollary}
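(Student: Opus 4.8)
The plan is to apply the Harish--Chandra isomorphism $\psi$ of Theorem~\ref{Thm:HCiso} to the polynomial Wronski relations $\sum_{r+s=k}(-1)^r(1-t^rq^s)e_r^\natural(x,y)g_s^\natural(x,y)=0$ ($k\in\N^*$), which are obtained above by equating the coefficients of $u^k$ on the two sides of $E_{n,m}^\natural(x,y;u)G_{n,m}^\natural(x,y;u)=E_{n,m}^\natural(x,y;tu)G_{n,m}^\natural(x,y;qu)$. Since $\psi$ is a homomorphism of algebras sending $g_s^\natural$ to $\cH_{n,m}^{s}$, applying it termwise to these relations yields $\sum_{r+s=k}(-1)^r(1-t^rq^s)\cD_{n,m}^{r}\cH_{n,m}^{s}=0$, that is \eqref{DHRels}, \emph{provided} each $e_r^\natural$ lies in the domain $\Lambda_{n,m;q,t}^\natural$ of $\psi$ and $\psi(e_r^\natural)=\cD_{n,m}^{r}$. (The case $k=0$ is the trivial identity $0=0$, since $e_0^\natural=g_0^\natural=1$.)

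The step requiring work is therefore to check that $E_{n,m}^\natural(x,y;u)\in\Lambda_{n,m;q,t}^\natural[[u]]$; this is the analogue for $E_{n,m}^\natural$ of the observation recorded for $G_{n,m}^\natural$ just before Lemma~\ref{Lemma:gknat}. Reading off \eqref{Enat}: the factor $\prod_{i=1}^n(1-x_it^{1-i}u)/(1-t^{1-i}u)$ is manifestly symmetric in the shifted variables $x_1,x_2t^{-1},\ldots,x_nt^{1-n}$; in the $y$-part the quotients $(t^{-n}q^{j}u;t^{-1})_\infty/(t^{-n}q^{j-1}u;t^{-1})_\infty$ telescope to a factor independent of $y$, while $\prod_{j=1}^m(y_jq^{j-1}u;t^{-1})_\infty/(y_jq^{j}u;t^{-1})_\infty$ is symmetric in $y_1,y_2q,\ldots,y_mq^{m-1}$; and a short computation with $(z;q)_\infty=(1-z)(qz;q)_\infty$ and its base-$t^{-1}$ counterpart gives, as power series identities in $u$, $T_{q,x_i}E_{n,m}^\natural=E_{n,m}^\natural\cdot(1-qx_it^{1-i}u)/(1-x_it^{1-i}u)$ and $T_{t,y_j}^{-1}E_{n,m}^\natural=E_{n,m}^\natural\cdot(1-y_jq^{j}u)/(1-y_jq^{j-1}u)$, so that, since $y_jq^{j}=q\cdot y_jq^{j-1}$, these agree along $x_it^{1-i}=y_jq^{j-1}$, which is condition \eqref{symCondShift}. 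Given this, $\psi(e_r^\natural)$ is a well-defined element of $\cR_{n,m;q,t}$ which, by the eigenfunction property characterising $\psi$ in Theorem~\ref{Thm:HCiso} and the eigenvalue equation \eqref{cDnmkEigEq} of Theorem~\ref{Thm:EigEqs}, acts on each $SP_\lambda$ ($\lambda\in H_{n,m}$) by the same scalar as $\cD_{n,m}^{r}$; hence $\psi(e_r^\natural)-\cD_{n,m}^{r}$ annihilates $\Phi_{n,m;N,0}$ for every $N$ by the expansion \eqref{SPDef}, and Lemma~\ref{Lemma:diffop} gives $\psi(e_r^\natural)=\cD_{n,m}^{r}$. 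Then \eqref{DHRels} follows as described, and in passing we have shown $\cD_{n,m}^{r}\in\cR_{n,m;q,t}$.

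I do not anticipate a genuine obstacle: the substantive content already sits in the functional equation for $E_{n,m}^\natural$ and $G_{n,m}^\natural$ (a routine but slightly lengthy $q$-Pochhammer manipulation), and everything else is either that manipulation's $u$-coefficient extraction or the symmetry bookkeeping placing $e_r^\natural$ in $\Lambda_{n,m;q,t}^\natural$. One can also bypass $\psi$ entirely: for fixed $k$, the operator $L_{n,m}:=\sum_{r+s=k}(-1)^r(1-t^rq^s)\cD_{n,m}^{r}\cH_{n,m}^{s}$ has the shape required by Lemma~\ref{Lemma:diffop}, it acts on $SP_\lambda$ by $\sum_{r+s=k}(-1)^r(1-t^rq^s)e_r^\natural(q^\mu,t^{-\nu-(n^m)})g_s^\natural(q^\mu,t^{-\nu-(n^m)})=0$ (Theorem~\ref{Thm:EigEqs} together with the polynomial identity), hence $L_{n,m}\Phi_{n,m;N,0}=0$ for all $N\in\N^*$ by \eqref{SPDef}, and Lemma~\ref{Lemma:diffop} forces $L_{n,m}\equiv0$.
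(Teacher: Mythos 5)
Your proposal is correct and follows essentially the same route as the paper: derive the functional equation $E_{n,m}^\natural(x,y;u)G_{n,m}^\natural(x,y;u)=E_{n,m}^\natural(x,y;tu)G_{n,m}^\natural(x,y;qu)$, extract coefficients of $u^k$ to get the polynomial Wronski relations, and apply the Harish--Chandra isomorphism $\psi$. The extra details you supply (that $e_r^\natural\in\Lambda^\natural_{n,m;q,t}$ and that $\psi(e_r^\natural)=\cD_{n,m}^{r}$ via the eigenvalue characterisation, the expansion \eqref{SPDef} and Lemma \ref{Lemma:diffop}) are exactly the steps the paper leaves implicit, and they check out.
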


These recurrence relations enable us to express the deformed MR operators $\cD_{n,m}^{r}$ in terms of the deformed NS operators $\cH_{n,m}^{r}$, and vice versa. We can thus conclude that the former operators generate the same commutative algebra as the latter.

\begin{corollary}
Under the assumption that $q,t$ are non-special, we have
$$
\mathcal{R}_{n,m;q,t} = \C\left[\cD_{n,m}^{1},\cD_{n,m}^{2},\ldots\right].
$$
\end{corollary}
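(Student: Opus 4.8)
The plan is to solve the Wronski type recurrence relations \eqref{DHRels} of Corollary \ref{Cor:recRels} for the two families of operators in terms of each other, exploiting their triangular structure in the degree $k$. First I would record the elementary fact, immediate from \eqref{Hnm}--\eqref{BmuI} and \eqref{cHnm}--\eqref{cDnm} (only the $\mu=0$, $I=\emptyset$ term survives at $u=0$, and the infinite-product prefactors equal $1$ there), that $\cH_{n,m}^{0}=\cD_{n,m}^{0}=\mathrm{id}$. Isolating in \eqref{DHRels} the two extreme terms $r=0$ (which is $(1-q^k)\cD_{n,m}^{0}\cH_{n,m}^{k}=(1-q^k)\cH_{n,m}^{k}$) and $r=k$ (which is $(-1)^k(1-t^k)\cD_{n,m}^{k}\cH_{n,m}^{0}=(-1)^k(1-t^k)\cD_{n,m}^{k}$) then yields, for each $k\in\N^*$, the single relation
$$
(1-q^k)\cH_{n,m}^{k}+(-1)^k(1-t^k)\cD_{n,m}^{k}+\sum_{r=1}^{k-1}(-1)^r(1-t^rq^{k-r})\cD_{n,m}^{r}\cH_{n,m}^{k-r}=0 .
$$
The non-special assumption \eqref{nonSpec} guarantees (taking $(i,j)=(k,0)$ and $(i,j)=(0,k)$) that $1-q^k\neq 0$ and $1-t^k\neq 0$ for every $k\in\N^*$, so this relation can be solved for either $\cD_{n,m}^{k}$ or $\cH_{n,m}^{k}$.

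Solving for $\cD_{n,m}^{k}$ expresses it as a polynomial in $\cH_{n,m}^{1},\ldots,\cH_{n,m}^{k}$ and $\cD_{n,m}^{1},\ldots,\cD_{n,m}^{k-1}$; an induction on $k$ (the base case $k=1$ giving $\cD_{n,m}^{1}=\tfrac{1-q}{1-t}\cH_{n,m}^{1}$, as the sum is empty) then shows $\cD_{n,m}^{k}\in\cR_{n,m;q,t}=\C[\cH_{n,m}^{1},\cH_{n,m}^{2},\ldots]$ for all $k$, whence $\C[\cD_{n,m}^{1},\cD_{n,m}^{2},\ldots]\subseteq\cR_{n,m;q,t}$. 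Symmetrically, solving the same relation for $\cH_{n,m}^{k}$ and inducting on $k$ (base case $\cH_{n,m}^{1}=\tfrac{1-t}{1-q}\cD_{n,m}^{1}$) gives $\cH_{n,m}^{k}\in\C[\cD_{n,m}^{1},\cD_{n,m}^{2},\ldots]$ for all $k$, hence the reverse inclusion $\cR_{n,m;q,t}\subseteq\C[\cD_{n,m}^{1},\cD_{n,m}^{2},\ldots]$. Combining the two inclusions yields the claimed equality.

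I do not expect a genuine obstacle here: the substantive input — the recurrence \eqref{DHRels} itself — was already obtained via the Harish-Chandra isomorphism \eqref{psi} and the functional equation relating $G^\natural_{n,m}$ and $E^\natural_{n,m}$. The only points requiring care are the two observations flagged above, namely that the zeroth-order operators are the identity (so that the extreme terms of the recurrence are, up to a nonzero scalar, exactly $\cH_{n,m}^{k}$ and $\cD_{n,m}^{k}$) and that the leading coefficients $1-q^k$ and $1-t^k$ are invertible in $\C$ under the standing non-special hypothesis. Commutativity of all the operators involved (the Theorem in Subsection~\ref{SubSec:Comm}) is implicitly used so that the relevant generated algebras are commutative, but it plays no essential role in the triangular elimination.
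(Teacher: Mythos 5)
Your proposal is correct and follows essentially the same route as the paper: the paper deduces the corollary directly from the Wronski-type recurrence relations \eqref{DHRels}, observing that they allow one to express each $\cD_{n,m}^{r}$ in terms of the $\cH_{n,m}^{s}$ and vice versa, which is precisely the triangular elimination you carry out. Your added details — that $\cH_{n,m}^{0}=\cD_{n,m}^{0}=\mathrm{id}$ and that non-specialness makes the coefficients $1-q^k$ and $1-t^k$ nonzero — are exactly the (implicit) points needed to make the paper's one-line argument rigorous.
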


Moreover, since the recurrence relations \eqref{DHRels} are of precisely the same form as in the undeformed case (cf.~Eq.~(5.5) in \cite{NS20}), the explicit (determinantal) relations in \cite{NS20} between the undeformed MR and NS operators carry over to the deformed case with minimal (and obvious) changes.

In \cite{FS14} (see Thm.~4.5), Feigin and Silantyev proved that the deformed MR operators $D_{n,m}^r$ with $r=1,\ldots,n+m$ are algebraically independent and thus define an integrable system. As a further application of Theorem \ref{Thm:HCiso}, we give a new proof of this fact.

\begin{corollary}
As long as $q,t$ are non-special, the algebra of difference operators $\cR_{n,m;q,t}$ contains $n+m$ algebraically independent elements, namely the deformed NS operators $\cH_{n,m}^{1},\ldots,\cH_{n,m}^{n+m}$ as well as the deformed MR operators $\cD_{n,m}^{1},\ldots,\cD_{n,m}^{n+m}$.
\end{corollary}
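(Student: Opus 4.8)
The plan is to transport the statement, through the Harish--Chandra isomorphism $\psi\colon\Lambda^{\natural}_{n,m;q,t}\to\cR_{n,m;q,t}$ of Theorem~\ref{Thm:HCiso}, into a statement purely about the polynomials $g_r^\natural$ and $e_r^\natural$. Since $\psi(g_r^\natural)=\cH_{n,m}^{r}$, the operators $\cH_{n,m}^{1},\dots,\cH_{n,m}^{n+m}$ are algebraically independent in $\cR_{n,m;q,t}$ precisely when $g_1^\natural,\dots,g_{n+m}^\natural$ are. For the deformed MR operators, Corollary~\ref{Cor:recRels} (and the discussion after it) gives $\cD_{n,m}^{r}\in\cR_{n,m;q,t}$, so $\cD_{n,m}^{r}=\psi(f_r)$ for a unique $f_r\in\Lambda^{\natural}_{n,m;q,t}$; comparing the eigenvalue relations \eqref{cDnmkEigEq} with the characterisation of $\psi$ in Theorem~\ref{Thm:HCiso} and using the Zariski density of the points $(q^\mu,t^{-\nu-(n^m)})$, one finds $f_r=e_r^\natural$, hence $e_r^\natural\in\Lambda^{\natural}_{n,m;q,t}$ and $\psi(e_r^\natural)=\cD_{n,m}^{r}$. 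Moreover the identity $E^\natural_{n,m}(x,y;u;q,t)=G^\natural_{m,n}(y,x;qu;t^{-1},q^{-1})$ obtained in the proof of Theorem~\ref{Thm:EigEqs} yields $e_r^\natural(x,y;q,t)=(-q)^{r}g_r^\natural(y,x;t^{-1},q^{-1})$, and $(t^{-1},q^{-1})$ is non-special whenever $(q,t)$ is, so the MR case follows from the NS case applied with $(n,m)$, $(q,t)$ replaced by $(m,n)$, $(t^{-1},q^{-1})$. Thus it suffices to prove that $g_1^\natural,\dots,g_{n+m}^\natural$ are algebraically independent over $\C$ for every non-special $(q,t)$ and all $n,m$.

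\textbf{Reduction to deformed power sums.} Taking the logarithm of the generating function $G^\natural_{n,m}(x,y;u)$ in \eqref{Gnat} by means of $\log(z;q)_\infty=-\sum_{l\geq1}z^{l}/\bigl(l(1-q^{l})\bigr)$, a short computation gives
\begin{equation*}
\sum_{r\geq0}g_r^\natural(x,y)\,u^{r}=\exp\!\Bigl(\sum_{l\geq1}\tfrac{u^{l}}{l}\bigl(c_l-p_l^\natural(x,y)\bigr)\Bigr),\qquad p_l^\natural(x,y):=\frac{t^{l}-1}{1-q^{l}}\sum_{i=1}^{n}t^{(1-i)l}x_i^{\,l}+t^{l}\sum_{j=1}^{m}q^{(j-1)l}y_j^{\,l},
\end{equation*}
for certain constants $c_l$. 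The Newton-type recursion $r\,g_r^\natural=\sum_{l=1}^{r}(c_l-p_l^\natural)\,g_{r-l}^\natural$ (with $g_0^\natural=1$) shows that $\C[g_1^\natural,\dots,g_k^\natural]=\C[p_1^\natural,\dots,p_k^\natural]$ for every $k$, since the $c_l$ are scalars. Hence it is enough to prove that the deformed power sums $p_1^\natural,\dots,p_{n+m}^\natural$ are algebraically independent over $\C$.

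\textbf{The Jacobian.} I would invoke the Jacobian criterion: $p_1^\natural,\dots,p_{n+m}^\natural$ are algebraically independent iff their Jacobian determinant with respect to $(x_1,\dots,x_n,y_1,\dots,y_m)$ is not the zero polynomial. From $\partial p_l^\natural/\partial x_i=l\,t^{1-i}\,c'_l\,(t^{1-i}x_i)^{l-1}$ and $\partial p_l^\natural/\partial y_j=l\,tq^{j-1}(tq^{j-1}y_j)^{l-1}$, where $c'_l:=(t^{l}-1)/(1-q^{l})$, the invertible rescalings $\zeta_i:=t^{1-i}x_i$ $(1\le i\le n)$, $\zeta_{n+j}:=tq^{j-1}y_j$ $(1\le j\le m)$, together with factoring the nonzero scalars $l$ out of the rows and $t^{1-i}$, $tq^{j-1}$ out of the columns, turn this Jacobian into a nonzero multiple of the ``twisted Vandermonde'' determinant
\begin{equation*}
\det\Bigl[\,c'_l\,\zeta_i^{\,l-1}\ (1\le i\le n)\ \Big|\ \zeta_{n+j}^{\,l-1}\ (1\le j\le m)\,\Bigr]_{1\le l\le n+m},
\end{equation*}
and $c'_l\neq0$ for $l=1,\dots,n+m$ because $q,t$ are non-special. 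Non-vanishing of such a determinant is then proved by induction on the matrix size $k$: for any $A\sqcup B=\{1,\dots,k\}$ and $c_1,\dots,c_k\in\C^{*}$, expanding $\det[\,c_l\zeta_i^{l-1}\ (i\in A)\,|\,\zeta_i^{l-1}\ (i\in B)\,]_{1\le l\le k}$ along the column indexed by $k$ shows that the coefficient of $\zeta_k^{\,k-1}$ equals, up to the nonzero factor $c_k$ or $1$, the analogous $(k-1)\times(k-1)$ determinant in $\zeta_1,\dots,\zeta_{k-1}$, which is nonzero by the inductive hypothesis. Consequently the Jacobian does not vanish identically, $p_1^\natural,\dots,p_{n+m}^\natural$ are algebraically independent, and the corollary follows.

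\textbf{Main obstacle.} The only step that is not bookkeeping is the last one: the Jacobian is not a plain (confluent) Vandermonde because of the column-dependent weights $c'_l=(t^{l}-1)/(1-q^{l})$, so one cannot simply factor them out and read off a Vandermonde product. The one-variable degree argument above circumvents this, and it is exactly the non-specialness hypothesis that guarantees $c'_l\neq0$ throughout the relevant range $l=1,\dots,n+m$.
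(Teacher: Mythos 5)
Your proposal is correct and follows essentially the same route as the paper's proof: transfer through the Harish--Chandra isomorphism together with the NS/MR duality (you reduce the MR case to the NS case, the paper goes the other way), reduction to the deformed shifted power sums via a Newton-type recursion obtained by logarithmic differentiation of the generating function, and finally the Jacobian criterion. The only substantive difference is the last step: the paper establishes non-vanishing of the Jacobian by exhibiting one explicit monomial with non-zero coefficient, whereas you reduce to a row-weighted Vandermonde determinant and argue by induction on its size via the coefficient of $\zeta_k^{k-1}$ --- both arguments are valid, and both use non-specialness only to guarantee $(t^l-1)/(1-q^l)\neq 0$ in the relevant range.
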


\begin{proof}
Since $\cH_{n,m}(x,y;u;q,t)=\cD_{m,n}(y,x;tu;t^{-1},q^{-1})$ (cf.~\eqref{Hnm}--\eqref{cDnm}), it suffices to prove the claim for the first $n+m$ deformed MR operators, which, thanks to Theorems \ref{Thm:EigEqs} and \ref{Thm:HCiso}, is equivalent to the polynomials $e_r^\natural(x,y)$ with $r=1,\ldots,n+m$ being algebraically independent.

To this end, we recall the deformed shifted power sums $p_r^\natural(x,y)=p_r^\natural(x,y;q,t)$ \eqref{pknat} and introduce their generating function
$$
P^\natural_{n,m}(x,y;u) := \sum_{r=1}^\infty p_r^\natural(x,y)u^{r-1}.
$$
For $|t|>1$, we have
\begin{multline*}
\frac{d}{du}\log (au;t^{-1})_\infty = -\sum_{k=0}^\infty \frac{at^{-k}}{1-aut^{-k}}\\
= -\sum_{k=0}^\infty \sum_{r=0}^\infty a^{r+1}t^{-(r+1)k}u^r = -\sum_{r=1}^\infty \frac{a^r}{1-t^{-r}}u^{r-1}.
\end{multline*}
Using this observation, a direct computation reveals that
$$
\left(E^\natural_{n,m}\right)^\prime(u)/E^\natural_{n,m}(u) = \frac{d}{du}\log E^\natural_{n,m}(u) = -P^\natural_{n,m}(u).
$$
Comparing coefficients of $u^{r-1}$ in the equivalent power-series identity $\left(E^\natural_{n,m}\right)^\prime(u)=-P^\natural_{n,m}(u)E^\natural_{n,m}(u)$, we obtain the following analogues of Newton's formulae:
$$
re^\natural_r(x,y) = \sum_{s=1}^r (-1)^{s-1}p^\natural_s(x,y)e^\natural_{r-s}(x,y)\ \ \ (r\in\N^*).
$$
In particular, they make it possible to express $e_1^\natural(x,y),\ldots,e_{n+m}^\natural(x,y)$ in terms of $p_1^\natural(x,y),\ldots,p_{n+m}^\natural(x,y)$ and vice versa, which clearly entails that
$$
\mathbb{C}\left[e_1^\natural(x,y),\ldots,e_{n+m}^\natural(x,y)\right] = \mathbb{C}\left[p_1^\natural(x,y),\ldots,p_{n+m}^\natural(x,y)\right].
$$

Hence the assertion will follow once we prove that the deformed shifted power sums $p_r^\natural(x,y)$ with $r=1,\ldots,n+m$ are algebraically independent, which, in turn, is readily inferred from their Jacobian, see e.g.~Thm.~2.2 in \cite{ER93}.\footnote{We are grateful to Misha Feigin for explaining this to us.} Indeed, if we have a relation $F(p^\natural_1,\ldots,p^\natural_{n+m})\equiv 0$, the chain rule entails
\begin{equation*}
\begin{split}
\mathbf{0} &= \left(\frac{\partial F}{\partial x_1},\ldots,\frac{\partial F}{\partial x_n},\frac{\partial F}{\partial y_1},\ldots,\frac{\partial F}{\partial y_m}\right)\\
&= \left(\frac{\partial F}{\partial p^\natural_1},\ldots,\frac{\partial F}{\partial p^\natural_{n+m}}\right)\cdot \frac{\partial\left(p_1^\natural,\ldots,p_{n+m}^\natural\right)}{\partial(x_1,\ldots,x_n,y_1,\ldots,y_m)},
\end{split}
\end{equation*}
and, assuming $F$ is of minimal degree, $\partial F/\partial p^\natural_j\neq 0$ for some $j=1,\ldots,n+m$, so that the Jacobian (determinant) must be zero. However, by a direct computation, we see, in particular, that the coefficient of the monomial $x_2 x_3^2\cdots x_n^{n-1} y_1^n y_2^{n+1}\cdots y_m^{n+m-1}$ equals
$$
n!\prod_{i=1}^n t^{i(1-i)}\cdot \frac{(q^{n+1};q)_m}{(t^{-n-1};t^{-1})_m} (n+1)_m\prod_{j=1}^m q^{(n+j)(j-1)},
$$
which is manifestly non-zero.
\end{proof}

\subsection{Restriction interpretation}
\label{SubSec:Res}
Deviating slightly from the notation in Macdonald's book \cite{Mac95}, we write $\Lambda_N$, $N\in\N^*$, for the graded algebra of {\it complex} $S_N$-invariant polynomials $p(z)$ in $N$ variables $z=(z_1,\ldots,z_N)$. The inverse limit $\Lambda=\varprojlim_N\, \Lambda_N$ (in the category of graded algebras), i.e.~the algebra of symmetric functions, will play an important role in this section.

More specifically, we establish an interpretation of the deformed NS operators $\cH_{n,m}^r$\eqref{cHnmk} and MR operators $\cH_{n,m}^r$ \eqref{cDnmk} as restrictions of operators $\cH_\infty^{r}$ and $\cD_\infty^{r}$, respectively, on $\Lambda$, thereby generalising Thm.~5.4 in \cite{SV09}, which essentially amounts to the $r=1$ case, to all $r\in\N$.

To begin with, suppose that the parameters $q,t$ are non-special. Then, as recalled in Subsection \ref{SubSec:EigFuncs}, the super Macdonald polynomials $SP_\lambda(x,y)$ ($\lambda\in H_{n,m}$) span the algebra $\Lambda_{n,m;q,t}$. Hence, by Theorem \ref{Thm:HCiso}, each difference operator $\cH_{n,m}^f$ ($f\in\Lambda_{n,m;q,t}^\natural$) leaves $\Lambda_{n,m;q,t}$ invariant.

More generally, we can work directly with the symmetry conditions that characterise $\Lambda_{n,m;q,t}$ to establish the following result.

\begin{lemma}
\label{Lemma:pres}
Assume that $q,t\in\C^*$ are not roots of unity. Then, for each $f\in\Lambda_{n,m;q,t}^\natural$, the difference operator $\cH_{n,m}^f$ preserves the algebra $\Lambda_{n,m;q,t}$:
$$
\cH_{n,m}^f: \Lambda_{n,m;q,t}\to \Lambda_{n,m;q,t}.
$$
\end{lemma}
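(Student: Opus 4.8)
The strategy is to verify directly that $\cH_{n,m}^f$ preserves $\Lambda_{n,m;q,t}$ by checking the three defining properties of that algebra: polynomiality, separate $S_n\times S_m$-symmetry, and the `wheel-type' conditions \eqref{qinv} along the hyperplanes $x_i=y_j$. Since $\Lambda_{n,m;q,t}^\natural$ is generated by the $g_r^\natural$ (Lemma \ref{Lemma:gknat}) and $\cH_{n,m}^f$ is the corresponding polynomial in the $\cH_{n,m}^r$, it suffices to treat a single operator $\cH_{n,m}^r$, equivalently the full generating series $\cH_{n,m}(x,y;u)$, and then use that the property of preserving $\Lambda_{n,m;q,t}$ is stable under composition and linear combination. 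So the real content is: for $p\in\Lambda_{n,m;q,t}$, show $\cH_{n,m}(x,y;u)p$ again lies in $\Lambda_{n,m;q,t}$ (as a formal power series in $u$ with coefficients in that algebra).

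\emph{Polynomiality and symmetry.} First I would argue that $\cH_{n,m}(x,y;u)p$ has polynomial coefficients. A priori the coefficient functions $B_{\mu,I}(x,y)$ in \eqref{BmuI} have denominators $\Delta(x)$, $(qx_i/x_j;q)_{\mu_i}$, $1-q^{\mu_i-1}x_i/y_j$ and $1-q^{\mu_i}x_i/y_j$, and $T_{q,x}^\mu T_{t,y}^{-I}p$ is a Laurent polynomial. The standard mechanism—already used by Macdonald for $D_n$ and by Sergeev--Veselov for the $r=1$ deformed operator—is that the apparent poles cancel: poles along $x_i=x_j$ cancel after symmetrising the terms with $\mu_i\leftrightarrow\mu_j$ (using the $S_n$-symmetry of $p$ in $x$), and poles along $x_i=q^ay_j$ cancel precisely because of the conditions \eqref{qinv} satisfied by $p$. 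Separate symmetry of $\cH_{n,m}(x,y;u)p$ in $x$ and in $y$ follows because the generating series itself is built symmetrically: the sum over $\mu\in\N^n$ together with the $S_n$-symmetry of $p$ in $x$ yields $S_n$-symmetry of the image, and the sum over subsets $I\subseteq\{1,\dots,m\}$ together with the $S_m$-symmetry of $p$ in $y$ yields $S_m$-symmetry; this is a direct check on the structure of \eqref{Hnm}--\eqref{BmuI} and mirrors the classical computation that $D_n(x;u)$ maps symmetric polynomials to symmetric polynomials.

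\emph{The wheel conditions.} The substantive step is showing that $\cH_{n,m}(x,y;u)p$ again satisfies \eqref{qinv}, i.e.\ $(T_{q,x_i}-T_{t,y_j}^{-1})(\cH_{n,m}(x,y;u)p)$ vanishes along $x_i=y_j$. Here I would use the kernel identity of Theorem \ref{Thm:kerIds}, or more precisely its consequences: by Theorem \ref{Thm:HCiso} we already know that $\cH_{n,m}^f$ acts on the basis $\{SP_\lambda\}_{\lambda\in H_{n,m}}$ of $\Lambda_{n,m;q,t}$ diagonally, hence preserves the span—but that argument needs $q,t$ non-special, whereas the present lemma is stated only assuming $q,t$ are not roots of unity. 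So the point of this lemma is precisely to remove the non-special hypothesis. The clean way is a continuity/Zariski-density argument: the condition ``$\cH_{n,m}^r$ maps $\Lambda_{n,m;q,t}$ into itself'' can be phrased as the vanishing of certain polynomial expressions in $q,t$ (after clearing denominators, the obstruction to $\cH_{n,m}^r p$ lying in the algebra, evaluated on a spanning set of $p$'s built from Schur-like polynomials, is a rational function of $q,t$ regular away from roots of unity). This holds on the dense set of non-special $(q,t)$ by the argument of the previous paragraph combined with Theorem \ref{Thm:HCiso}, hence holds for all $(q,t)$ that are not roots of unity by continuity. Alternatively, one checks \eqref{qinv} for the image directly: restrict attention to the residue of $\cH_{n,m}(x,y;u)p$ at $x_i=y_j$ and show, by pairing each term $T_{q,x}^\mu T_{t,y}^{-I}$ against a suitably related term, that the combination $(T_{q,x_i}-T_{t,y_j}^{-1})$ annihilates it—this is exactly the super-symmetry computation in the spirit of \cite{SV09,FS14}, now carried out at the level of the generating series.

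\emph{Main obstacle.} The delicate part is the pole cancellation along $x_i=q^a y_j$ in the coefficient functions $B_{\mu,I}$: one must match the factor $1/(1-q^{\mu_i}x_i/y_j)$ (for $j\in I$) or $1/(1-q^{\mu_i-1}x_i/y_j)$ (for $j\notin I$) against the residue structure forced by \eqref{qinv}, and this requires pairing the summand indexed by $(\mu,I)$ with the one where the roles of $x_i$ and $y_j$ are exchanged—i.e.\ where $\mu_i$ decreases by one and $j$ is added to or removed from $I$. Keeping track of the numerical prefactors $(t^{1-n}q^mu)^{|\mu|}(-tu)^{|I|}q^{\binom{|I|}{2}}$ through this pairing, and checking that the residues cancel identically, is the technical heart; everything else is either classical (Macdonald's argument for $x_i=x_j$) or a density argument. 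I expect the cleanest writeup to invoke the already-established Theorem \ref{Thm:HCiso} on the non-special locus and then extend by continuity, relegating the direct residue computation to a remark, since the $r=1$ case is in \cite{SV09} and the general pattern is unchanged.
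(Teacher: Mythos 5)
Your outline of the direct verification (separate symmetry from the equivariance of the $B_{\mu,I}$, pole cancellation along $x_i=q^{k}x_j$ by pairing $\mu$ with a permuted-and-shifted $\tilde\mu$, pole cancellation along $y_j=q^{\mu_i}x_i$ via the conditions \eqref{qinv} on $p$ by pairing $(\mu+e_i,I)$ with $(\mu,I\cup\{j\})$, and finally checking \eqref{qinv} for the image) is exactly the route the paper takes in Appendix \ref{App:pres}, and you have correctly identified the crucial index pairing. However, you state that your preferred writeup would instead invoke Theorem \ref{Thm:HCiso} on the non-special locus and extend to all non-root-of-unity $(q,t)$ by continuity/Zariski density, and that argument has a genuine gap. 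The obstruction you want to show vanishes is not a function of $(q,t)$ alone: it depends on the input $p$, and the space of admissible $p$ -- namely $\Lambda_{n,m;q,t}$ in a fixed degree, the kernel of a linear system whose coefficients depend on $(q,t)$ -- is not a flat family over the parameter space. Its fibre dimension can only jump \emph{up} on the closed locus of special parameters, so an element of $\Lambda_{n,m;q_0,t_0}$ at a special (but non-root-of-unity) point $(q_0,t_0)$ need not arise as a specialisation of a family $p_{q,t}\in\Lambda_{n,m;q,t}$ defined for nearby non-special $(q,t)$; indeed, for non-special parameters the algebra is spanned by the $SP_\lambda$, $\lambda\in H_{n,m}$, but this can fail at special parameters. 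For such ``extra'' elements the density argument says nothing, and one would first have to prove a flatness or semicontinuity statement that is at least as hard as the lemma itself. This is precisely why the paper removes the non-special hypothesis by the direct residue computation rather than by deformation in $(q,t)$.

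A second, smaller point: in the direct route you fold the verification of \eqref{qinv} for the image $F=\cH_{n,m}(u)p$ into the same residue discussion, but in the paper this is a separate step requiring its own identities (the implications (I)--(IV) relating $T_{q,x_1}B_{\mu,I}$, $T_{t,y_1}^{-1}B_{\mu,I}$ and $T_{q,x_1}B_{\mu-e_1,I\cup\{1\}}$ along $x_1=y_1$), distinct from the pole-cancellation pairing. Your sketch would need to carry out both computations explicitly; as written, the technical heart of the proof is named but not executed.
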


\begin{proof}
A proof of this lemma is provided in Appendix \ref{App:pres}.
\end{proof}

Now, with $z=(z_1,z_2,\ldots)$ and $w=(w_1,w_2,\ldots)$ two infinite sequences of variables, we recall from Eqs.~(2.5)-(2.6) and (4.13) in Chapter VI of \cite{Mac95} the kernel function
$$
\Pi(z;w;q,t) = \prod_{i,j}\frac{(tz_iw_j;q)_\infty}{(z_iw_j;q)_\infty}
$$
along with its expansions in terms of power sums and Macdonald symmetric functions:
\begin{equation}
\label{PiExps}
\Pi(z;w;q,t) = \sum_\lambda z_\lambda(q,t)^{-1} p_\lambda(z)p_\lambda(w) = \sum_\lambda P_\lambda(z;q,t)Q_\lambda(w;q,t)
\end{equation}
with
$$
z_\lambda(q,t) = \prod_{i\geq 1} i^{m_i} m_i!\cdot \prod_{i=1}^{l(\lambda)} \frac{1-q^{\lambda_i}}{1-t^{\lambda_i}},
$$
where $m_i=m_i(\lambda)$ denotes the number of parts of $\lambda$ equal to $i$.

Introducing the notation
$$
\cH_N(z;u)=\cH_{N,0}(z;u),\ \ \ \cD_N(z;u)=\cD_{N,0}(z;u),
$$
let us define operators $\cH_N^r$ ($r\in\N$) and $\cD_N^r$ ($r\in\N$) by the generating series expansions
$$
\cH_N(z;u) = \sum_{r=0}^\infty u^r\cH_N^r(z),\ \ \ \cD_N(z;u) = \sum_{r=0}^\infty (-u)^r\cD_N^r(z).
$$
One of their distinguishing features is stability under reductions of the number of variables $N$. More precisely, with the homomorphism
$$
\rho_{N,N-1}: \Lambda_N\to \Lambda_{N-1},\, p(z_1,\ldots,z_N)\mapsto p(z_1,\ldots,z_{N-1},0),
$$
the diagrams
\begin{equation*}
\begin{tikzcd}
\Lambda_N \arrow[r,"\rho_{N,N-1}"] \arrow[d,swap,"\cH_N^r"] &
\Lambda_{N-1} \arrow[d,"\cH_{N-1}^r"] \\
\Lambda_N \arrow[r,"\rho_{N,N-1}"] & \Lambda_{N-1}
\end{tikzcd}
\end{equation*}
and
\begin{equation*}
\begin{tikzcd}
\Lambda_N \arrow[r,"\rho_{N,N-1}"] \arrow[d,swap,"\cD_N^r"] &
\Lambda_{N-1} \arrow[d,"\cD_{N-1}^r"] \\
\Lambda_N \arrow[r,"\rho_{N,N-1}"] & \Lambda_{N-1}
\end{tikzcd}
\end{equation*}
are commutative for all $r\in\N$. To see this, it suffices to note that the eigenvalues of these operators are independent of $N$, cf.~\eqref{cGlamExpr2} and \eqref{EnatExpr} or see \cite{NS20} and \cite{Mac95}, respectively.

Hence, we have well-defined generating series
$$
\cH_\infty(z;u) := \sum_{r=0}^\infty u^r \cH_\infty^r(z),\ \ \ \cD_\infty(z;u) := \sum_{r=0}^\infty (-u)^r \cD_\infty^r(z),
$$
of operators
$$
\cH_\infty^r := \varprojlim_N\, \cH_N^r: \Lambda\to\Lambda,\ \ \ \cD_\infty^r := \varprojlim_N\, \cD_N^r: \Lambda\to\Lambda,
$$
which are simultaneously diagonalised by the Macdonald symmetric functions. Combining this fact with the latter expansion in \eqref{PiExps}, we arrive at the following lemma.

\begin{lemma}
\label{Lemma:PiKerIds}
For $|q|<1$, we have the kernel identities
\begin{equation*}
\cH_\infty(z;u)\Pi(z;w) = \cH_\infty(w;u)\Pi(z;w)
\end{equation*}
and
\begin{equation*}
\cD_\infty(z;u)\Pi(z;w) = \cD_\infty(w;u)\Pi(z;w).
\end{equation*}
\end{lemma}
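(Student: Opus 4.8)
The plan is to combine the Cauchy-type expansion $\Pi(z;w)=\sum_\lambda P_\lambda(z)Q_\lambda(w)$ from \eqref{PiExps} with the fact, recalled just above, that the operators $\cH_\infty^{r}$ and $\cD_\infty^{r}$ are simultaneously diagonalised by the Macdonald symmetric functions, with eigenvalues independent of $N$.

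First I would fix notation for these eigenvalues. By the $m=0$, $n\to\infty$ case of \eqref{cHnmEigEq} (using that $SP_\lambda(x)=P_\lambda(x)$ when $m=0$, cf.~\eqref{SPExp2}), the operator $\cH_\infty(z;u)$ is diagonalised by the Macdonald functions $P_\lambda(z)$, with eigenvalue $\mathcal{G}_\lambda(u)$ as in \eqref{cGlam}. Likewise, the classical eigenvalue relation for the ordinary Macdonald $q$-difference operators (Chapter VI of \cite{Mac95}), together with \eqref{cDnmkEigEq}--\eqref{EnatExpr}, gives $\cD_\infty(z;u)P_\lambda(z)=\mathcal{E}_\lambda(u)P_\lambda(z)$ with $\mathcal{E}_\lambda(u)=\prod_{i\geq 1}(1-q^{\lambda_i}t^{1-i}u)/(1-t^{1-i}u)$. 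Since $Q_\lambda=b_\lambda P_\lambda$, the same scalars $\mathcal{G}_\lambda(u)$, respectively $\mathcal{E}_\lambda(u)$, are the eigenvalues of $\cH_\infty(z;u)$ and $\cD_\infty(z;u)$ on $Q_\lambda$.

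Next I would observe that every $\cH_\infty^{r}$ and $\cD_\infty^{r}$ is homogeneous of degree $0$ as an operator on $\Lambda$: indeed, each $\cH_N^{r}$, $\cD_N^{r}$ is a $\C$-linear combination of the difference operators in \eqref{Hn} and \eqref{Dn} (with the spectral variable rescaled), whose coefficient functions are homogeneous rational functions of degree $0$, while the $q$- and $t$-shift operators preserve the degree. Consequently $\cH_\infty^{r}$ and $\cD_\infty^{r}$ act on the graded completion $\widehat{\Lambda\otimes\Lambda}$ containing $\Pi(z;w)$ by acting term by term on the sum $\sum_\lambda P_\lambda(z)Q_\lambda(w)$, which is finite in each total degree. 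Applying $\cH_\infty(z;u)$, which acts in the $z$-variables, gives $\cH_\infty(z;u)\Pi(z;w)=\sum_\lambda \mathcal{G}_\lambda(u)P_\lambda(z)Q_\lambda(w)$, whereas applying $\cH_\infty(w;u)$, which acts in the $w$-variables, gives $\cH_\infty(w;u)\Pi(z;w)=\sum_\lambda \mathcal{G}_\lambda(u)P_\lambda(z)Q_\lambda(w)$, the same series; comparing the two yields the first kernel identity. Replacing $\cH$ by $\cD$ and $\mathcal{G}_\lambda$ by $\mathcal{E}_\lambda$ throughout proves the second. The hypothesis $|q|<1$ is used only to guarantee that $\Pi(z;w)$, the operators $\cH_\infty^{r}$, $\cD_\infty^{r}$ and the $q$-Pochhammer symbols in the eigenvalues are well-defined.

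The sole point needing care --- and the main, rather mild, obstacle --- is the legitimacy of applying the inverse-limit operators term by term to the infinite series $\sum_\lambda P_\lambda(z)Q_\lambda(w)$; this is exactly what degree-$0$ homogeneity provides, since within each total degree the sum is finite and $\cH_\infty^{r}$, $\cD_\infty^{r}$ act as ordinary linear maps. Alternatively, one can bypass the completion by restricting to finitely many variables: there $\Phi_{N,0;M,0}$ coincides with $\Pi$ after rescaling one set of variables by $t^{-1}$, a rescaling that commutes with the degree-$0$ operators $\cH_{N,0}$, $\cD_{N,0}$; the identities then follow from Theorem \ref{Thm:kerIds} with $m=M=0$ and pass to the inverse limit via the stability of $\cH_N^{r}$, $\cD_N^{r}$ under $\rho_{N,N-1}$.
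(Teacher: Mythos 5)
Your proposal is correct and takes essentially the same route as the paper, which obtains the lemma directly by combining the simultaneous diagonalisation of $\cH_\infty^r$ and $\cD_\infty^r$ by the Macdonald symmetric functions with the expansion $\Pi(z;w)=\sum_\lambda P_\lambda(z;q,t)Q_\lambda(w;q,t)$ in \eqref{PiExps}. Your extra care about degree-zero homogeneity and the legitimacy of term-by-term application (and the alternative finite-$N$ route via Theorem \ref{Thm:kerIds}) merely makes explicit what the paper leaves implicit.
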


From Thm.~5.8 in \cite{SV09}, we recall that the homomorphism
$$
\varphi_{n,m;q,t}: \Lambda\to \Lambda_{n,m;q,t},\ p_r(z_1,z_2,\ldots)\mapsto p_r((x_1,\ldots,x_n),(y_1,\ldots,y_m);q,t),
$$
with the deformed Newton sums
$$
p_r(x,y;q,t) = \sum_{i=1}^n x_i^r + \frac{1-q^r}{1-t^{-r}}\sum_{j=1}^m y_j^r\ \ \ (r\in\N^*),
$$
is surjective whenever $q,t$ are non-special. (Note that $t$ in {\it loc.~cit.} corresponds to $t^{-1}$ here.)
Assuming $|q|<1$, we also recall that
\begin{equation}
\label{varphiPi}
\varphi_{n,m;q,t}(\Pi(z;w)) = \Pi_{n,m;\infty}(x,y;w)
\end{equation}
with
\begin{equation}
\label{PiDef}
\Pi_{n,m;\infty}(x,y;w) = \prod_{i=1}^n\prod_{k=1}^\infty \frac{(tx_iw_k;q)_\infty}{(x_iw_k;q)_\infty}\cdot \prod_{j=1}^m\prod_{k=1}^\infty (1-ty_jw_k),
\end{equation}
see Property (ii) in Lemma 5.5 in \cite{SV09}.

Setting $w_k=0$ for $k>N$, with $N\in\N^*$, in \eqref{PiDef}, we obtain the meromorphic function
\begin{equation}
\begin{split}
\Pi_{n,m;N}(x,y;w) &:= \prod_{i=1}^n\prod_{k=1}^N \frac{(tx_iw_k;q)_\infty}{(x_iw_k;q)_\infty}\cdot \prod_{j=1}^m\prod_{k=1}^N (1-ty_jw_k)\\
&= \Phi_{n,m;N,0}(tx,ty;w),
\end{split}
\end{equation}
cf.~\eqref{kerFunc}. Hence, since $\cH_{n,m}(x,y;u)$ and $\cD_{n,m}(x,y;u)$ are invariant under $x\to tx$ and $x\to ty$, we may substitute $\Pi_{n,m;N}$ for $\Phi_{n,m;N,0}$ in the $M=0$ instance of Theorem \ref{Thm:kerIds}. By stability under reductions of the number of variables $N$, the resulting (sequence of) kernel identities amount to
\begin{equation}
\label{cHinfKerId}
\cH_{n,m}(x,y;u)\Pi_{n,m;\infty}(x,y;w) = \cH_\infty(z;u)\Pi_{n,m;\infty}(x,y;w)
\end{equation}
and
\begin{equation}
\label{cDinfKerId}
\cD_{n,m}(x,y;u)\Pi_{n,m;\infty}(x,y;w) = \cD_\infty(z;u)\Pi_{n,m;\infty}(x,y;w).
\end{equation}

Using Lemma \ref{Lemma:PiKerIds} as well as \eqref{varphiPi} and \eqref{cHinfKerId}, we deduce
\begin{equation*}
\begin{split}
\varphi_{n,m;q,t} \big(\cH_\infty(z;u)\Pi(z;w)\big) &= \varphi_{n,m;q,t} \big(\cH_\infty(w;u)\Pi(z;w)\big)\\
&= \cH_\infty(w;u)\big(\varphi_{n,m;q,t} \Pi(z;w)\big)\\
&= \cH_{n,m}(x,y;u)\big(\varphi_{n,m;q,t} \Pi(z;w)\big).
\end{split}
\end{equation*}
Substituting the former expansion in \eqref{PiExps} and comparing coefficients, we find that
\begin{equation}
\label{interRelcH}
\big(\varphi_{n,m;q,t}\circ \cH_\infty(u)\big) p_\lambda = \big(\cH_{n,m}(u)\circ\varphi_{n,m;q,t}\big) p_\lambda
\end{equation}
for all partitions $\lambda$. Using \eqref{cDinfKerId} instead of \eqref{cHinfKerId}, we obtain \eqref{interRelcH} with $\cH_\infty(u)\to \cD_\infty(u)$ and $\cH_{n,m}(u)\to \cD_{n,m}(u)$.

Since the $p_\lambda$ span $\Lambda$, the above arguments and Lemma \ref{Lemma:pres} yield the following result.

\begin{theorem}
\label{Thm:res}
For all $q,t\in\C^*$ that are not roots of unity, the diagrams
\begin{equation*}
\begin{tikzcd}
\Lambda \arrow[r,"\varphi_{n,m;q,t}"] \arrow[d,swap,"\cH_\infty^{r}"] &
\Lambda_{n,m;q,t} \arrow[d,"\cH_{n,m}^{r}"] \\
\Lambda \arrow[r,"\varphi_{n,m;q,t}"] & \Lambda_{n,m;q,t}
\end{tikzcd}
\end{equation*}
and
\begin{equation*}
\begin{tikzcd}
\Lambda \arrow[r,"\varphi_{n,m;q,t}"] \arrow[d,swap,"\cD_\infty^{r}"] &
\Lambda_{n,m;q,t} \arrow[d,"\cD_{n,m}^{r}"] \\
\Lambda \arrow[r,"\varphi_{n,m;q,t}"] & \Lambda_{n,m;q,t}
\end{tikzcd}
\end{equation*}
are commutative for all $r\in\N$.
\end{theorem}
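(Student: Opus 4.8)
The plan is to reduce the statement to the two ingredients already assembled in the text: the intertwining relations \eqref{interRelcH} (and its $\cD$-analogue) on the one hand, and the invariance result of Lemma~\ref{Lemma:pres} on the other. Concretely, I would argue as follows. Fix $r\in\N$. The commutativity of the first diagram means precisely that $\varphi_{n,m;q,t}\circ\cH_\infty^r = \cH_{n,m}^r\circ\varphi_{n,m;q,t}$ as maps $\Lambda\to\Lambda_{n,m;q,t}$. Now \eqref{interRelcH} gives $\varphi_{n,m;q,t}\bigl(\cH_\infty(u)p_\lambda\bigr) = \cH_{n,m}(x,y;u)\bigl(\varphi_{n,m;q,t}\,p_\lambda\bigr)$ for every partition $\lambda$; comparing coefficients of $u^r$ on both sides yields $\varphi_{n,m;q,t}\bigl(\cH_\infty^r\,p_\lambda\bigr) = \cH_{n,m}^r\bigl(\varphi_{n,m;q,t}\,p_\lambda\bigr)$. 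Since the power-sum products $p_\lambda$ span $\Lambda$ as a vector space and all maps in sight are $\C$-linear, the two homomorphisms $\varphi_{n,m;q,t}\circ\cH_\infty^r$ and $\cH_{n,m}^r\circ\varphi_{n,m;q,t}$ agree on a spanning set, hence coincide. This is exactly the commutativity of the first square. The second square is handled identically, replacing \eqref{interRelcH} by its stated $\cD$-version, i.e.\ comparing coefficients of $(-u)^r$ in $\varphi_{n,m;q,t}\bigl(\cD_\infty(u)p_\lambda\bigr) = \cD_{n,m}(x,y;u)\bigl(\varphi_{n,m;q,t}\,p_\lambda\bigr)$.

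One point that needs to be made explicit is that the diagram is well posed under the sole hypothesis that $q,t$ are not roots of unity, rather than the stronger non-special condition. The relations \eqref{interRelcH} were derived (via the kernel identities of Theorem~\ref{Thm:kerIds}, the stability of $\cH_N^r$ and $\cD_N^r$ under $\rho_{N,N-1}$, and the factorisation $\Pi_{n,m;N}(x,y;w)=\Phi_{n,m;N,0}(tx,ty;w)$) without invoking non-specialness; the only place non-specialness entered the surrounding discussion was in asserting that $\varphi_{n,m;q,t}$ is surjective and that the $SP_\lambda$ form a basis, neither of which is used in the present argument. What genuinely requires only ``not a root of unity'' is that the right-hand vertical arrow $\cH_{n,m}^r\colon\Lambda_{n,m;q,t}\to\Lambda_{n,m;q,t}$ is well defined, and this is precisely the content of Lemma~\ref{Lemma:pres} (the analogous statement for $\cD_{n,m}^r$ following by the relation $\cD_{n,m}(x,y;u;q,t)=\cH_{m,n}(y,x;qu;t^{-1},q^{-1})$ together with Lemma~\ref{Lemma:pres} applied with the parameters and variable blocks swapped). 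So the proof should first quote Lemma~\ref{Lemma:pres} to make the diagrams meaningful, then run the spanning-set argument above.

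The step I expect to require the most care is verifying that \eqref{interRelcH}, as derived in the text, really does hold for all $q,t$ not roots of unity and not merely for non-special $q,t$ --- i.e.\ tracing back through the chain $\Pi(z;w)\mapsto\Pi_{n,m;\infty}(x,y;w)$, Lemma~\ref{Lemma:PiKerIds}, and \eqref{cHinfKerId}--\eqref{cDinfKerId} to confirm no hidden non-specialness assumption was used, and also checking that the formal manipulations with the generating series $\cH_\infty(z;u)$, $\cD_\infty(z;u)$ (which are infinite sums of operators raising degree) are legitimate when applied to a fixed $p_\lambda$, so that coefficient extraction in $u$ is justified. Everything else is a routine linearity-and-spanning-set verification. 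I would therefore structure the proof as: (i) recall Lemma~\ref{Lemma:pres} to ensure well-definedness of both diagrams; (ii) recall \eqref{interRelcH} and its $\cD$-analogue, noting they hold for $q,t$ not roots of unity; (iii) extract the coefficient of $u^r$ (resp.\ $(-u)^r$) to get the identity on each $p_\lambda$; (iv) conclude by linearity since the $p_\lambda$ span $\Lambda$.
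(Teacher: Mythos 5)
Your proposal is correct and coincides with the paper's own argument: the paper proves Theorem \ref{Thm:res} precisely by combining the intertwining relations \eqref{interRelcH} (and their $\cD$-analogue), coefficient extraction in $u$, the fact that the $p_\lambda$ span $\Lambda$, and Lemma \ref{Lemma:pres} for well-definedness of the right-hand vertical arrows. Your additional remarks on tracking where non-specialness versus the weaker root-of-unity condition is actually needed are a sensible refinement of a point the paper leaves implicit, but they do not change the route.
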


\begin{remark}
This result has an interesting (algebro-)geometric interpretation. Assuming $q^i/t^j\neq 1$ for all $i=1,\ldots,n$ and $j=1,\ldots,m$, it was proved in Thm.~5.1 in \cite{SV09} that the algebra $\Lambda_{n,m;q,t}$ is finitely generated, so that a corresponding (affine) variety
$$
\Delta_{n,m;q,t} := \mathrm{Spec}\, \Lambda_{n,m;q,t}
$$
could be introduced. Restricting attention further to non-special parameter values $q,t$, the homomorphism $\varphi_{n,m;q,t}:\Lambda\to\Lambda_{n,m;q,t}$ is surjective, and thus yields an embedding $\phi: \Delta_{n,m;q,t}\to\cM$, with $\cM:=\mathrm{Spec}\, \Lambda$ called the (infinite-dimensional) Macdonald variety.

Hence, for $q,t$ non-special, the deformed NS operators $\cH_{n,m}^{r}$ can be viewed as the restrictions of the operators $\cH_\infty^{r}$ onto the subvariety $\Delta_{n,m;q,t}\subset\cM$, and similarly for the deformed MR operators $\cD_{n,m}^{r}$.
\end{remark}

\begin{appendix}

\section{Proofs of Lemmas}
\label{App:Lemmas}
In this Appendix, we provide proofs of the Lemmas in the main text.

\subsection{Commutativity}

\subsubsection{Proof of Lemma \ref{Lemma:diffop}}
\label{App:diffop}
From \eqref{kerFunc}, it is readily inferred that the equality $L_{n,m}(x,y)\Phi_{n,m;N,0}(x,y;z)=0$ is equivalent to
$$
\sum_{\substack{\mu\in\N^n, \nu\in\N^m\\ |\mu|+|\nu|\leq d}}a_{\mu,\nu}(x,y) \prod_{i=1}^n\prod_{j=1}^N \frac{(t^{-1}x_iz_j;q)_{\mu_i}}{(x_iz_j;q)_{\mu_i}}\cdot \prod_{i=1}^m\prod_{j=1}^N \frac{1-t^{-\nu_i}y_iz_j}{1-y_iz_j} = 0.
$$
We write this formula as
\begin{equation}
\label{avarphisum}
\sum_{\substack{\mu\in\N^n, \nu\in\N^m\\ |\mu|+|\nu|\leq d}}a_{\mu,\nu}(x,y)\varphi_{\mu,\nu}(x,y;z) = 0,
\end{equation}
where
$$
\varphi_{\mu,\nu}(x,y;z) = \prod_{i=1}^n\prod_{j=1}^N \frac{(t^{-1}x_iz_j;q)_{\mu_i}}{(x_iz_j;q)_{\mu_i}}\cdot \prod_{i=1}^m\prod_{j=1}^N \frac{1-t^{-\nu_i}y_iz_j}{1-y_iz_j}.
$$
For $(\alpha,\beta)\in\N^n\times\N^m$ such that $|\alpha|+|\beta|\leq d$, we take $N=n+dm-|\beta|$ and specialise the $z$-variables to
\begin{multline*}
z_{\alpha,\beta} = (t/q^{\alpha_1}x_1,t/q^{\alpha_2}x_2,\ldots,t/q^{\alpha_n}x_n;
t^{\beta_1+1}/y_1,t^{\beta_1+2}/y_1,\ldots,t^d/y_1;\\
\ldots;t^{\beta_m+1}/y_m,t^{\beta_m+2}/y_m,\ldots,t^d/y_m).
\end{multline*}
Then we get
\begin{equation*}
\begin{split}
\varphi_{\mu,\nu}(x,y;z_{\alpha,\beta}) &= \prod_{i,j=1}^n \frac{(x_i/q^{\alpha_j}x_j;q)_{\mu_i}}{(tx_i/q^{\alpha_j}x_j;q)_{\mu_i}}\cdot \prod_{i=1}^n\prod_{j=1}^m \frac{(t^{\beta_j}x_i/y_j;q,t)_{\mu_i,d-\beta_j}}{(t^{\beta_j+1}x_i/y_j;q,t)_{\mu_i,d-\beta_j}}\\
&\quad \cdot \prod_{i=1}^m\prod_{j=1}^n \frac{1-t^{1-\nu_i}y_i/q^{\alpha_j}x_j}{1-ty_i/q^{\alpha_j}x_j}\cdot \prod_{i,j=1}^m \frac{(t^{\beta_j+1-\nu_i}y_i/y_j;t)_{d-\beta_j}}{(t^{\beta_j+1}y_i/y_j;t)_{d-\beta_j}},
\end{split}
\end{equation*}
where
$$
(a;q,t)_{k,l} = \prod_{i=0}^{k-1}\prod_{j=0}^{l-1} (1-aq^it^j).
$$
Since $\varphi_{\mu,\nu}(x,y;z_{\alpha,\beta})$ contains the ``diagonal'' factors
$$
\prod_{i=1}^n \frac{(q^{-\alpha_i};q)_{\mu_i}}{(tq^{-\alpha_i};q)_{\mu_i}}\cdot \prod_{j=1}^m \frac{(t^{\beta_j+1-\nu_j};t)_{d-\beta_j}}{(t^{\beta_j+1};t)_{d-\beta_j}},
$$
it vanishes if $\mu_i>\alpha_i$ for some $i\in\{1,\ldots,n\}$ or if $\nu_j>\beta_j$ for some $j\in\{1,\ldots,m\}$, which clearly entails that the $z\to z_{\alpha,\beta}$ specialisation of \eqref{avarphisum} is given by
\begin{equation}
\label{avarphisum2}
a_{\alpha,\beta}(x,y)\varphi_{\alpha,\beta}(x,y;z_{\alpha,\beta}) + \sum_{\substack{\mu\in\N^n, \nu\in\N^m\\ |\mu|+|\nu|<|\alpha|+|\beta|}}a_{\mu,\nu}(x,y)\varphi_{\mu,\nu}(x,y;z_{\alpha,\beta}) = 0.
\end{equation}

Now assume that $L_{n,m}(x,y)$ is a non-zero difference operator. Then we let $d\in\N$ be the smallest non-negative integer such that $a_{\alpha,\beta}\neq 0$ for some $(\alpha,\beta)\in\N^n\times\N^m$ with $|\alpha|+|\beta|=d$. By \eqref{avarphisum2}, we have $a_{\alpha,\beta}(x,y)\varphi_{\alpha,\beta}(x,y;z_{\alpha,\beta})=0$ and, since the meromorphic function $\varphi_{\alpha,\beta}(x,y;z_{\alpha,\beta})$ is non-zero, it follows that $a_{\alpha,\beta}=0$. Hence we have reached a contradiction and the lemma follows.

\subsection{Harish-Chandra isomorphism}

\subsubsection{Proof of Lemma \ref{Lemma:gknat}}
\label{App:gknat}
We begin by recalling a few definitions and results from the literature that we make use of in the proof.

We write $\Lambda_{N,t}$ for the algebra of complex polynomials $p\in\C[z_1,\ldots,z_N]$ that are symmetric in the ``shifted'' variables $z_1,z_2t^{-1},\ldots,z_Nt^{N-1}$, and note that it is filtered by the degree of the polynomials:
$$
\Lambda_{N,t}^{\leq 0}\subset \Lambda_{N,t}^{\leq 1}\subset\cdots \subset \Lambda_{N,t}^{\leq r}\subset\cdots\ \ \ (r\in\N),
$$
with $\Lambda_{N,t}^{\leq r}$ the subspace of such polynomials of degree at most $r$. The inverse limit $\Lambda_t$ of the filtered algebras $\Lambda_{N,t}$ with respect to the homomorphisms
$$
\Lambda_{N,t}\to \Lambda_{N-1,t},\ \ \ p(z_1,\ldots,z_{N-1},z_N)\mapsto p(z_1,\ldots,z_{N-1},1)\ \ \ (N\in\N^*),
$$
is the so-called algebra of shifted symmetric functions \cite{Oko98}. It is, for example, generated by the shifted power sums
$$
p_r^*(z;t) = \sum_{i\geq 1}(z_i^r-1)t^{r(1-i)}\ \ \ (r\in\N^*).
$$

From Thm.~6.2 in \cite{SV09}, we recall that the algebra homomorphism
\begin{equation}
\label{varphinat}
\varphi_{n,m;q,t}^\natural: \Lambda_t\to \Lambda_{n,m;q,t}^\natural,\ p_r^*(z;t)\mapsto p_r^\natural(x;y;q,t),
\end{equation}
with the deformed shifted power sums
\begin{equation}
\label{pknat}
p_r^\natural(x;y;q,t) = \sum_{i=1}^n (x_i^r-1)t^{r(1-i)}+\frac{1-q^r}{1-t^{-r}}\sum_{j=1}^m (y_j^r-t^{-rn})q^{r(j-1)}\ \ \ (r\in\N^*),
\end{equation}
is surjective under the assumption that $q,t$ are non-special.

Our proof strategy is to exhibit shifted symmetric functions $g^*_r(z)=g^*_r(z;q,t)$ ($r\in\N^*$) that freely generate $\Lambda_t$ and are such that $\varphi_{n,m;q,t}(g^*_r(z))=g^\natural_r(x,y)$, cf.~\eqref{gknat}.

More specifically, let us consider the generating function expansion
\begin{equation}
\label{gkstar}
G^*_N(z;u) := \prod_{i=1}^N \frac{(z_it^{2-i}u;q)_\infty}{(t^{2-i}u;q)_\infty}\frac{(t^{1-i}u;q)_\infty}{(z_it^{1-i}u;q)_\infty} = \sum_{r\geq 0}g_r^*(z_1,\ldots,z_N)u^r.
\end{equation}
Since the product is manifestly symmetric in the variables $z_it^{1-i}$, it is clear that $g_r^*(z_1,\ldots,z_N)\in\Lambda_{N,t}^r$. Moreover, setting $z_N=1$ in \eqref{gkstar}, we find the following stability properties:
$$
G^*_N(z_1,\ldots,z_{N-1},1;u) = G^*_{N-1}(z_1,\ldots,z_{N-1};u),
$$
$$
g_r^*(z_1,\ldots,z_{N-1},1) = g_r^*(z_1,\ldots,z_{N-1}),
$$
so that we can define shifted symmetric functions $g_r^*(z)$ as the coefficients of $u^r$ in the power series expansion of the infinite product
$$
G^*(z;u) := \prod_{i\geq 1} \frac{(z_it^{2-i}u;q)_\infty}{(t^{2-i}u;q)_\infty}\frac{(t^{1-i}u;q)_\infty}{(z_it^{1-i}u;q)_\infty}.
$$

Fixing $r\in\N$, we claim that the
$$
g_\lambda^*(z) := \prod_{i\geq 1} g^*_{\lambda_i}(z),\ \ \ |\lambda|\leq r,
$$
constitute a basis in $\Lambda^{\leq r}_t$. To prove this claim, we may and shall work in $\Lambda^{\leq r}_{N,t}$ as long as $N\geq r$, since the corresponding projection $\rho^*_N: \Lambda_t^{\leq r}\to \Lambda_{N,t}^{\leq N}$ is an isomorphism. We observe that
$$
g_r^*(z_1,\ldots,z_N) = g_r(z_1,z_2t^{-1},\ldots,z_Nt^{1-N})+\text{lower degree terms},
$$
where $g_r(x_1,\ldots,x_N)=g_r(x_1,\ldots,x_N;q,t)$ are the symmetric polynomials from Eq.~(2.8) in Section VI.2 of \cite{Mac95}:
$$
\prod_{i=1}^N \frac{(tx_iu;q)_\infty}{(x_iu;q)_\infty} = \sum_{r\geq 0}g_r(x_1,\ldots,x_N)u^r.
$$
Recalling from (2.19) in {\it loc.~cit.}~that the
$$
g_\lambda(x_1,\ldots,x_N) := \prod_{i\geq 1}g_{\lambda_i}(x_1,\ldots,x_N),\ \ \ |\lambda|=r,
$$
form a basis in $\Lambda_N^r$, the claim is readily established by induction in $r$. In other words, we have just shown that the $g_r^*(z)$ $(r\in\N^*)$ freely generate the algebra of shifted symmetric functions $\Lambda_t$.

Next, we claim that
\begin{equation}
\label{gkmap}
\varphi_{n,m;q,t}^\natural(g_r^*(z)) = g_r^\natural(x,y)\ \ \ (r\in\N^*).
\end{equation}
Taking this claim for granted, it becomes clear that the polynomials $g_r^\natural(x,y)$ ($r\in\N^*$) generate $\Lambda^\natural_{n,m;q,t}$, since $\varphi_{n,m;q,t}^\natural$ \eqref{varphinat} is surjective and the shifted symmetric functions $g_r^*$ ($r\in\N^*$) generate $\Lambda_t$.

To complete the proof, there remains only to verify \eqref{gkmap}. Since we have no explicit formulae for $g_r^*$ and $g_r^\natural$ in terms of $p_r^*$ and $p_r^\natural$, respectively, we shall work on the level of generating functions. First, we deduce the equalities
\begin{equation}
\label{Gstcmp}
\begin{split}
G^*(z;u) &= \prod_{i\geq 1}\prod_{a\geq 0}\frac{1-q^az_it^{2-i}u}{1-q^at^{2-i}u} \frac{1-q^at^{1-i}u}{1-q^az_it^{1-i}u}\\
&= \exp\Bigg(\sum_{i\geq 1}\sum_{a\geq 0}\big[\log(1-q^az_it^{2-i}u)-\log(1-q^at^{2-i}u)\\
&\qquad\qquad\qquad\qquad +\log(1-q^at^{1-i}u)-\log(1-q^az_it^{1-i}u)\big]\Bigg)\\
&= \exp\Bigg(\sum_{i\geq 1}\sum_{a\geq 0}\sum_{r\geq 1}\frac{u^rq^{ar}}{r}(1-t^r)(z_i^r-1)t^{r(1-i)}\Bigg)\\
&= \exp\Bigg(\sum_{r\geq 1}\frac{u^r}{r}\frac{1-t^r}{1-q^r}p_r^*(z;t)\Bigg).
\end{split}
\end{equation}
From \eqref{varphinat}, it follows that
$$
\varphi^\natural_{n,m;q,t}(G^*(z;u)) = \exp\Bigg(\sum_{r\geq 1}\frac{u^r}{r}\frac{1-t^r}{1-q^r}p_r^\natural(x,y)\Bigg).
$$
Substituting \eqref{pknat}, reversing the steps in \eqref{Gstcmp} and comparing the end result with \eqref{Gnat}, we obtain
$$
\varphi^\natural_{n,m;q,t}(G^*(z;u)) = G^\natural_{n,m}(x,y;u),
$$
which clearly is equivalent to \eqref{gkmap}. This completes the proof of Lemma \ref{Lemma:gknat}.

\subsection{Restriction interpretation}
\subsubsection{Proof of Lemma \ref{Lemma:pres}}
\label{App:pres}
By the definition of $\cR_{n,m;q,t}$ \eqref{cRnm}, it suffices to prove the claim for the deformed NS operators $\cH_{n,m}^{r}$ ($r\in\N^*$) and, for convenience, we shall work with their generating function $\cH_{n,m}(u)$ \eqref{cHnm}.

Observing that their coefficient functions $B_{\mu,I}$ \eqref{BmuI} satisfy
$$
B_{\mu,I}(\sigma x,\tau y) = B_{\sigma^{-1}\mu,\tau^{-1}I}(x,y),\ \ \ (\sigma,\tau)\in S_n\times S_m,
$$
it becomes clear that $\cH_{n,m}(x,y;u)$ commutes with the action of $S_n\times S_m$:
\begin{equation}
\label{inv}
(\sigma,\tau)\circ \cH_{n,m}(x,y;u) = \cH_{n,m}(x,y;u)\circ (\sigma,\tau)
\end{equation}
for all $(\sigma,\tau)\in S_n\times S_m$.

Given any $f\in\Lambda_{n,m;q,t}$, we let
\begin{equation*}
\begin{split}
F &= \cH_{n,m}(u) f\\
&= \frac{(t^{1-n}q^mu;q)_\infty}{(tu;q)_\infty}\sum_{\mu\in\N^n}\sum_{I\subseteq\{1,\ldots,m\}} (t^{1-n}q^mu)^{|\mu|}(-tu)^{|I|}q^{\binom{|I|}{2}}B_{\mu,I} T_{q,x}^\mu T_{t,y}^{-I}f.
\end{split}
\end{equation*}
From \eqref{BmuI}, it is clear that $F$ is a rational function in $x$ and $y$ whose poles are at most simple and located only along
\begin{equation}
\label{xpls}
x_j = q^{k_i}x_i,\ \ \ k_i=0,1,\ldots,\mu_i,\ \ 1\leq i\neq j\leq n,
\end{equation}
\begin{equation}
\label{ypls}
y_i = y_j,\ \ \ 1\leq i\neq j\leq m,
\end{equation}
or
\begin{equation}
\label{xxipls}
y_j = q^{\mu_i}x_i,\ \ y_j = q^{\mu_i-1}x_i,\ \ \ 1\leq i\leq n,\ \ 1\leq j\leq m.
\end{equation}
Moreover, thanks to the invariance property \eqref{inv}, $F$ is symmetric in both $x$ and $y$. Hence it will follow that $F$ is a polynomial once we can show that it has no poles of the form \eqref{xpls}--\eqref{ypls} with $j=1$ and $i=2$ or \eqref{xxipls} with $j=i=1$.

First, we consider the $y$-independent poles \eqref{xpls} with $j=1$ and $i=2$. Let us fix $\mu\in\N^n$ such that at least one of the elements $\mu_1$, $\mu_2$ is non-zero, take $0\leq k_2\leq\mu_2$, and let
$$
\tilde{\mu} = \sigma_{12}\mu-k_2(e_1-e_2) = (\mu_2-k_2,\mu_1+k_2,\mu_3,\ldots,\mu_n),
$$
where $\sigma_{12}$ denotes the transposition that acts on $\mu=(\mu_1,\ldots,\mu_n)$ by interchanging $\mu_1$ and $\mu_2$. We note that both $B_{\mu,I}$ and $B_{\tilde{\mu},I}$ have a simple pole along $x_1=q^{k_2}x_2$. (In the excluded case $\mu_1=\mu_2=0$ these operator coefficients, which then coincide, are regular along $x_1=x_2$.) In order to compare the corresponding residues, we observe that
$$
\Delta(q^{\tilde{\mu}}x) = -\Delta(q^\mu x),\ \ \ x_1 = q^{k_2}x_2,
$$
whenever $k_2>0$; and that, when interpreted in terms of residues along $x_1-x_2$, the equality holds true also for $k_2=0$. Combining this equality with the identity
$$
(aq^{k_2};q)_{\mu_1}(a;q)_{\mu_2} = (aq^{k_2};q)_{\mu_2-k_2}(a;q)_{\mu_1+k_2},\ \ \ a\in\C,
$$
it is readily verified that
$$
(x_1-q^{k_2}x_2)B_{\tilde{\mu},I}(x,y) = -(x_1-q^{k_2}x_2)B_{\mu,I}(x,y),\ \ \ x_1 = q^{k_2}x_2.
$$
It follows that the residues along $x_1 = q^{k_2}x_2$ of the two terms in
$$
B_{\mu,I}T_{q,x}^\mu T_{t,y}^{-I}f+B_{\tilde{\mu},I}T_{q,x}^{\tilde{\mu}} T_{t,y}^{-I}f
$$
cancel, and consequently that $F$ has no poles of the form \eqref{xpls}.

Second, we focus attention on the $x$-independent pole \eqref{ypls} with $j=1$ and $i=2$. We note that $B_{\mu,I}$ is regular along $y_1=y_2$ unless $1\in I$ and $2\notin I$ or vice versa. For such an index set $I$, it is clear from \eqref{BmuI} that
$$
(y_1-y_2)B_{\mu,\sigma_{12}I}(x,y) = -(y_1-y_2)B_{\mu,I}(x,y),\ \ \ y_1 = y_2,
$$
so that the residues along $y_1=y_2$ of the terms in
$$
B_{\mu,I}T_{q,x}^\mu T_{t,y}^{-I}f+B_{\mu,\sigma_{12}I}T_{q,x}^{\mu} T_{t,y}^{-\sigma_{12}I}f
$$
cancel and $F$ is thus free of poles of the form \eqref{ypls}.

Third, we handle the poles \eqref{xxipls} with $i=j=1$. Taking $I\subset\{1,\ldots,m\}$ such that $1\notin I$, we deduce
\begin{equation*}
\begin{split}
\frac{B_{\mu+e_1,I}(x,y)}{B_{\mu,I\cup\{1\}}(x,y)} &= q^{n-1}\prod_{i=2}^n \left(\frac{y_1-q^{\mu_i}x_i}{qy_1-q^{\mu_i}x_i} \cdot \frac{q^{\mu_1+1}x_1-q^{\mu_i}x_i}{q^{\mu_1}x_1-q^{\mu_i}x_i}\right)\\
&\quad \cdot (t/q)^{n-1} \prod_{i=2}^n \left(\frac{qy_1-x_i}{ty_1-x_i} \cdot \frac{tq^{\mu_1}x_1-x_i}{q^{\mu_1+1}x_1-x_i}\right)\\
&\quad \cdot (t/q)\cdot \frac{qy_1-x_1}{ty_1-x_1} \cdot \frac{1-tq^{\mu_1}}{1-q^{\mu_1+1}}\\
&\quad \cdot \prod_{i\in I} \left(\frac{qy_1-y_i}{y_1-y_i} \cdot \frac{q^{\mu_1}x_1-y_i}{q^{\mu_1+1}x_1-y_i}\right)\\
&\quad \cdot q^{|I|+1-m} \prod_{i\notin I\cup\{1\}}\left(\frac{y_1-y_i}{y_1-qy_i} \cdot \frac{q^{\mu_1}x_1-qy_i}{q^{\mu_1}x_1-y_i}\right),
\end{split}
\end{equation*}
from which it clearly follows that $B_{\mu+e_1,I}(x,y)/B_{\mu,I\cup\{1\}}(x,y)=t^nq^{|I|-m}$ for $y_1=q^{\mu_1}x_1$. Since $\binom{|I|+1}{2}-\binom{|I|}{2}=|I|$, we can thus conclude that
\begin{multline*}
(t^{1-n}q^mu)^{|\mu|+1}(-tu)^{|I|}q^{\binom{|I|}{2}}(y_1-q^{\mu_i}x_i)B_{\mu+e_1,I}(x,y)\\
+(t^{1-n}q^mu)^{|\mu|}(-tu)^{|I|+1}q^{\binom{|I|+1}{2}}(y_1-q^{\mu_i}x_i)B_{\mu,I\cup\{1\}}(x,y) = 0,\ \ \ y_1=q^{\mu_1}x_1,
\end{multline*}
which, together with the symmetry condition \eqref{qinv} with $i=j=1$, entails residue cancelation along $y_1=q^{\mu_1}x_1$ in
\begin{multline*}
(t^{1-n}q^mu)^{|\mu|+1}(-tu)^{|I|}q^{\binom{|I|}{2}}B_{\mu+e_1,I}T_{q,x}^{\mu+e_1} T_{t,y}^{-I}f\\
+(t^{1-n}q^mu)^{|\mu|}(-tu)^{|I|+1}q^{\binom{|I|+1}{2}}B_{\mu,I\cup\{1\}}T_{q,x}^\mu T_{t,y}^{-I\cup\{1\}}f,
\end{multline*}
and so $F$ is regular also along the hyperplanes \eqref{xxipls}.

In summary, we have shown that $F$ is a $S_n\times S_m$-invariant polynomial in $x$ and $y$, and there remains only to verify the symmetry conditions \eqref{qinv}. Restricting attention to the hyperplane $x_1=y_1$, we have the following implications:
\begin{enumerate}
\item[(I)] $1\in I\Rightarrow T_{t,y_1}^{-1}B_{\mu,I} = 0,$
\item[(II)] $\mu_1\geq 1\land 1\notin I\Rightarrow T_{q,x_1}B_{\mu,I} = 0,$
\item[(III)] $\mu_1=0\land 1\notin I\Rightarrow \big(T_{q,x_1}-T_{t,y_1}^{-1}\big)B_{\mu,I} = 0,$
\item[(IV)] $\mu_1\geq 1\land 1\notin I\Rightarrow T_{t,y_1}^{-1}B_{\mu,I}+t^nq^{|I|-m}T_{q,x_1}B_{\mu-e_1,I\cup\{1\}} = 0.$
\end{enumerate}
Implication (I) is due to the factor $1-x_1/ty_1$, present in the next to last product in $B_{\mu,I}$; Implication (II) is due to $1-x_1/qy_1$, contained in the last product; and Implications (III)--(IV) are straightforward to verify by direct (albeit somewhat lengthy) computations when isolating the $\mu_1$- and $y_1$-dependent factors in $B_{\mu,I}$ and $B_{\mu-e_1,I\cup\{1\}}$.

By Implication (III), we have
\begin{multline*}
\big(T_{q,x_1}-T_{t,y_1}^{-1}\big)F
= \sum_{\substack{\mu\in\N^n\\ \mu_1\geq 1}}\sum_{\substack{I\subset\{1,\ldots,m\}\\ 1\notin I}}(t^{1-n}q^mu)^{|\mu|}(-tu)^{|I|}q^{\binom{|I|}{2}}\\
\cdot \big(T_{q,x_1}-T_{t,y_1}^{-1}\big)\big(B_{\mu,I}T_{q,x}^\mu T_{t,y}^{-I}f-t^nq^{|I|-m}B_{\mu-e_1,I\cup\{1\}}T_{q,x}^{\mu-e_1}T_{t,y}^{-I\cup\{1\}}f\big)
\end{multline*}
when $x_1=y_1$. Using Implications (I)--(II), we deduce
\begin{multline*}
\big(T_{q,x_1}-T_{t,y_1}^{-1}\big)\big(B_{\mu,I}T_{q,x}^\mu T_{t,y}^{-I}f-t^nq^{|I|-m}B_{\mu-e_1,I\cup\{1\}}T_{q,x}^{\mu-e_1}T_{t,y}^{-I\cup\{1\}}f\big)\\
= -\big(T_{t,y_1}^{-1}B_{\mu,I}+t^nq^{|I|-m}T_{q,x_1}B_{\mu-e_1,I\cup\{1\}}\big)T_{q,x}^\mu T_{t,y}^{-I\cup\{1\}}f,
\end{multline*}
which, by Implication (IV), vanishes along $x_1=y_1$. This concludes our verification of the symmetry conditions \eqref{qinv}, and the Lemma follows.

\end{appendix}

\section*{Acknowledgments}
We would like to thank Farrokh Atai and Misha Feigin for valuable discussions.
M.N.~is grateful to the Knut and Alice Wallenberg Foundation for funding his guest professorship at KTH.
Financial support from the Swedish Research Council is acknowledged by M.H.~(Project-id~2018-04291) and H.R.~(Project-id~2020-04221).

\end{document}